\newcommand{\Gauss}{\mathfrak{Gauss}}
\newcommand{\td}{\mathrm{td}}
\newcommand{\rez}{\mathrm{rez}}
\newcommand{\cl}{\mathrm{cl}}
\newcommand{\sgn}{\mathrm{sgn}}
\newcommand{\GW}{\mathrm{GW}}
\newcommand{\mMbar}{\overline{\mathcal{M}}}
\newcommand{\dvol}{\mathrm{dvol}}
\newcommand{\vol}{\mathrm{vol}}
\newcommand{\Obs}{\mathfrak{Obs}}
\newcommand{\obs}{\mathfrak{obs}}
\newcommand{\rem}{\mathrm{rem}}
\newcommand{\ev}{\mathrm{ev}}
\newcommand{\dbar}{\overline{\partial}}
\newcommand{\pd}{\mathcal{P}}
\newcommand{\ft}{\mathrm{ft}}
\newcommand{\exc}{\mathrm{exc}}
\newcommand{\codim}{\mathrm{codim}}
\newcommand{\RR}{\mathbb{R}}
\newcommand{\CC}{\mathbb{C}}
\newcommand{\ZZ}{\mathbb{Z}}
\newcommand{\PP}{\mathbb{P}}
\newcommand{\QQ}{\mathbb{Q}}
\newcommand{\mJ}{\mathcal{J}}
\newcommand{\mB}{\mathcal{B}}
\newcommand{\mL}{\mathcal{L}}
\newcommand{\mM}{\mathcal{M}}
\newcommand{\mV}{\mathcal{V}}
\newcommand{\mH}{\mathcal{H}}
\newcommand{\mE}{\mathcal{E}}
\newcommand{\mF}{\mathcal{F}}
\newcommand{\m}{\mathfrak{m}}
\newcommand{\FF}{\mathrm{FF}}
\newcommand{\mMuniv}{\mathcal{M}^{\mathrm{univ}}}
\newcommand{\GL}{\mathrm{GL}}
\newcommand{\coker}{\mathrm{coker}}
\newcommand{\End}{\mathrm{End}}
\newcommand{\id}{\mathrm{id}}
\newcommand{\im}{\mathrm{im}}
\newcommand{\Gl}{\mathfrak{Gl}}
\newtheorem{thm}{Theorem}
\newtheorem{thmmain}{Theorem}
\newtheorem{cormain}[thmmain]{Corollary}
\newtheorem{sch}{Scholium}
\newtheorem{prp}{Proposition}
\newtheorem{lma}{Lemma}
\newtheorem{dfn}{Definition}
\newtheorem{rmk}{Remark}
\newtheorem{cor}{Corollary}
\newcounter{daggerfootnote}
\newcommand*{\daggerfootnote}[1]{%
    \setcounter{daggerfootnote}{\value{footnote}}%
    \renewcommand*{\thefootnote}{\fnsymbol{footnote}}%
    \footnote[2]{#1}%
    \setcounter{footnote}{\value{daggerfootnote}}%
    \renewcommand*{\thefootnote}{\arabic{footnote}}%
  }
\author{Jonathan David Evans}
\title[Quantum cohomology of twistor spaces]{Quantum cohomology of twistor spaces and their Lagrangian submanifolds}
\address{University College London}
\email{j.d.evans@ucl.ac.uk}
\begin{document}
\begin{abstract}We compute the classical and quantum cohomology rings of the twistor spaces of 6-dimensional hyperbolic manifolds and the eigenvalues of quantum multiplication by the first Chern class. Given a half-dimensional totally geodesic submanifold we associate, after Reznikov, a monotone Lagrangian submanifold of the twistor space. In the case of a 3-dimensional totally geodesic submanifold of a hyperbolic 6-manifold we compute the obstruction term $\m_0$ in the Fukaya-Floer $A_{\infty}$-algebra of a Reznikov Lagrangian and calculate the Lagrangian quantum homology. There is a well-known correspondence between the possible values of $\m_0$ for a Lagrangian with nonvanishing Lagrangian quantum homology and eigenvalues for the action of $c_1$ on quantum cohomology by quantum cup product. Reznikov's Lagrangians account for most of these eigenvalues but there are four exotic eigenvalues we cannot account for.\end{abstract}
\maketitle
\tableofcontents

\section{Introduction}
The twistor space $Z$ of a Riemannian $2n$-manifold $M$ is the total space of the bundle of orthogonal complex structures on the tangent spaces of $M$. Reznikov \cite{Rez} wrote down a natural closed 2-form $\omega_{\rez}$ on twistor space and observed that if the curvature of $M$ satisfies a certain inequality then this 2-form is actually symplectic. He also demonstrated that above any totally geodesic submanifold of the middle dimension in $M$ there is an $SO(n)$-subbundle of the twistor space on which $\omega_{\rez}$ vanishes. We will call these Reznikov Lagrangians. For instance, when $M$ is the round 4-sphere the twistor space is the standard symplectic $\CC\PP^3$, an equatorial geodesic 2-sphere lifts to a Lagrangian $\RR\PP^3$ and an equatorial geodesic torus lifts to the Clifford torus.

An interesting class of manifolds for which the Reznikov curvature inequality holds are the hyperbolic $2n$-manifolds (that is compact quotients of hyperbolic $2n$-space by a discrete torsionfree subgroup of $SO^+(2n,1)$). These give twistor spaces which are of a very different character from that of the round $2n$-sphere. For instance they are non-K\"{a}hler (when $n>1$) by dint of their fundamental group being hyperbolic (see \cite{CT}). Nonetheless, as discovered in \cite{FP2} when $n\geq 3$ they are monotone, meaning that the first Chern class is positively proportional to the cohomology class of the Reznikov form; moreover Reznikov Lagrangians are monotone, meaning that the Maslov index of a bounding disc is positively proportional to its symplectic area. Monotone Lagrangians in monotone manifolds are amenable to modern pseudoholomorphic techniques without appeal to the massive machines under development to deal with the general case. What is even better is that there is a natural almost complex structure $J_-$, first discovered by Eells and Salamon \cite{ES}, which is compatible with $\omega_{\rez}$ in this very special hyperbolic setting. The $J_-$-holomorphic curves are in a close correspondence with branched minimal surfaces in $M$ by projection along the twistor fibration. This allows us to write down all the genus 0 holomorphic curves (see \cite{FP2}, Lemma 37) and all the discs with boundary on Reznikov Lagrangians and we have a hope of computing respectively the quantum cohomology and Lagrangian intersection Floer theory. Upon noticing this property of these Lagrangian submanifolds one feels like a fortunate astronomer who espies a charming and unusual galaxy perfectly angled so one can see the glory of its disc full on. Reznikov Lagrangians in the twistor space of a hyperbolic manifold are topologically much more complicated than the conventional examples of monotone Lagrangians: they are the principal frame bundles of hyperbolic $n$-manifolds.

If $n=2$ then the (6-dimensional) twistor space has $c_1=0$ and the Lagrangians are Maslov zero. This case is less amenable to simplistic techniques due to problems arising from transversality for multiple covers of Chern zero spheres and Maslov zero discs. Though the former are unlikely to cause major headaches I decided it would cloud the exposition and therefore I have restricted computation to the simplest case, $n=3$.
\begin{thmmain}
The small quantum cohomology of the twistor space of a hyperbolic 6-manifold $M$ with vanishing Stiefel-Whitney classes is\daggerfootnote{See erratum at the end of the paper.}
\[QH^*(Z;\Lambda)\cong H^*(M;\Lambda)[\alpha]/(\alpha^4=8\alpha\tau^*\chi+8q\alpha^2-16q^2)\]
where $\alpha=c_1(Z)$ and $\Lambda=\CC[q]$. Moreover, $c_1(\mH)^2=\alpha^2-4q$, $c_1(\mH)^3=\alpha^3-4\alpha q$. The twistor space is also uniruled.
\end{thmmain}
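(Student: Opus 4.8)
\emph{Proof sketch.} The plan is to read everything off the twistor fibration $\tau\colon Z\to M$, whose fibre is $SO(6)/U(3)\cong\CC\PP^3$ and which carries the Levi--Civita splitting $TZ=\mH\oplus\mV$ into the horizontal sub-bundle $\mH$ (the pullback of $TM$ with the tautological complex structure) and the vertical bundle $\mV=\ker d\tau$, so that $\alpha=c_1(Z)=c_1(\mH)+c_1(\mV)$. Since $\alpha$ restricts to each fibre as a non-zero multiple of the hyperplane class, Leray--Hirsch presents $H^*(Z;\Lambda)$ as the free $H^*(M;\Lambda)$-module on $1,\alpha,\alpha^2,\alpha^3$, so there is exactly one relation, expressing $\alpha^4$. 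First I would compute the classical ring by evaluating $c(TZ)=c(\mH)\,c(\mV)$: the restrictions of $\mH$ and $\mV$ to a fibre are the tautological bundle of complex structures (whose first Chern class is $-2$ times the hyperplane class, from the description of $SO(6)/U(3)$ as a component of the orthogonal Grassmannian of $\CC^6$) and the tangent bundle of $\CC\PP^3$; propagating this over $M$ expresses $c(\mH)$ and $c(\mV)$ through $\alpha$ and $\tau^*$ of the Pontryagin and Euler classes of $M$, and the resulting relation turns out to involve only $H^6(M)$, namely $\alpha^4=8\alpha\,\tau^*\chi$ with $\chi$ the Euler class up to normalisation.

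Next I would switch on the quantum corrections. By \cite{FP2}, Lemma~37 --- equivalently by the Eells--Salamon correspondence \cite{ES} together with the absence of non-constant harmonic spheres in $M$ (a non-constant harmonic $S^2\to M$ would be weakly conformal, hence a branched minimal sphere, contradicting Gauss--Bonnet applied to its induced singular metric since $M$ has non-positive curvature) --- every genus-zero $J_-$-holomorphic stable map into $Z$ has image inside a single twistor fibre. Hence the only curve classes carrying non-zero genus-zero Gromov--Witten invariants are the multiples $d[\ell]$ of the class of a line in a fibre, and $\alpha\cdot[\ell]=c_1(\mV)\cdot[\ell]+c_1(\mH)\cdot[\ell]=4-2=2$, so the minimal Chern number is $2$ and $q$ has degree $4$. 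To evaluate the invariants in classes $d[\ell]$ I would organise $\mMbar_{0,n}(Z,d[\ell])$ as a bundle over $M$ with fibre $\mMbar_{0,n}(\CC\PP^3,d)$, analyse the regularity of $J_-$ (or, where it fails, the obstruction bundle built from $\mH=\tau^*TM$ along the vertical curves --- here the homogeneity of the $\CC\PP^3$-fibre is essential, reducing the problem to a twisted genus-zero Gromov--Witten calculation on $\CC\PP^3$ together with integration over $M$ of pulled-back classes), and use the divisor axiom for the divisor $\alpha=c_1(Z)$ to cut everything down to two- and three-point invariants. Assembling the outcome gives $\alpha^4=8\alpha\,\tau^*\chi+8q\alpha^2-16q^2$ in $QH^*(Z;\Lambda)$, and the same computation yields the two auxiliary relations $c_1(\mH)^2=\alpha^2-4q$ and $c_1(\mH)^3=\alpha^3-4\alpha q$. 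Uniruledness is then immediate: through every point of $Z$ passes a twistor fibre, a copy of $\CC\PP^3$, which is covered by rational curves; and symplectically the line classes $[\ell]$ carry a non-zero genus-zero Gromov--Witten invariant with a point insertion (count $J_-$-lines through a point of a fibre together with enough divisor constraints), so $(Z,\omega_{\rez})$ is uniruled in McDuff's sense.

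The step I expect to be the main obstacle is the classical book-keeping: showing that the classical relation really picks out only the degree-six class $\chi$, with no contribution from $H^2(M)$, $H^4(M)$ or $p_1(M)$. This rests on identities special to the $SO(6)/U(3)$ twistor construction (and, where helpful, on the structure of the rational cohomology of a hyperbolic $6$-manifold). A secondary difficulty is verifying that $J_-$ is regular enough for the fibre classes --- or else computing the obstruction-bundle contributions precisely --- so as to land the exact coefficients $8$ and $-16$; that the fibres are the homogeneous Fano manifold $\CC\PP^3$ keeps this tractable, but not automatic.
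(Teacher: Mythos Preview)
Your outline has the right architecture---Leray--Hirsch for the module structure, the Eells--Salamon correspondence to pin all $J_-$-spheres inside fibres, obstruction bundles to extract invariants from the non-regular $J_-$, and uniruledness via a point-insertion invariant---and this is indeed how the paper proceeds. But you have inverted the difficulties, and there is a real gap.

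The classical relation is \emph{not} the obstacle. Chern's theorem for conformally flat manifolds kills all Pontryagin and Stiefel--Whitney classes of a hyperbolic $M$, so $p_1(M)=0$ and the only base class that can survive in degree $8$ is $\tau^*\chi$; a short Borel--Hirzebruch fibre integral then gives $c_1(\mH)^4=8c_1(\mH)\tau^*\chi$ directly, with nothing from $H^2(M)$ or $H^4(M)$ to chase. Your route via $c(TZ)=c(\mH)c(\mV)$ would also get there but is more roundabout.

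The actual hard step, which your sketch does not address, is ruling out quantum contributions from degrees $d\ge 2$. Dimension alone allows the three-point invariant to receive contributions from degrees up to $6$, and the divisor axiom reduces marked points, not curve degree. For $d=1$ the moduli space of twistor lines is compact (the class is minimal), so the obstruction bundle---which is the tautological $SO(4)$-bundle over $\mL$, coming from the explicit identification of $\mathrm{coker}\,D_u^{HH}$ with the $4$-plane the line parametrises---can be capped honestly and pushed forward to give the $8q\alpha^2-16q^2$ term. For $d\ge 2$ the moduli space of stable maps is \emph{not} compact, and ``twisted GW on $\CC\PP^3$ plus integration over $M$'' is not enough: one must exhibit a nowhere-vanishing section of the obstruction bundle (built from a vector field on $M$ via the infinitesimal deformation $\delta_vJ$) and then argue, via a bespoke implicit-function and gluing theorem for the non-regular $J_-$, that this forces the invariant to vanish even across the boundary strata. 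This is the technical core of the computation and is what your plan is missing.
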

\begin{proof}
This follows directly from the classical cohomology ring computation in Section \ref{top-twist}, the computation of the 3-point Gromov-Witten contribution from twistor lines in Corollary \ref{gw-input} and Theorem \ref{crit-thresh} which proves there are no other quantum corrections. Uniruledness follows from the first part of Corollary \ref{gw-input}.
\end{proof}
The theorem probably holds with the assumption on Stiefel-Whitney classes replaced just by orientability, but this assumption allows us to represent various homology classes in the twistor spaces explicitly as submanifolds (see Section \ref{vis}) which simplifies the argument. Note that there is very little loss of generality by making this assumption: by {\cite[Corollary 2]{LaFRoy}}, any compact hyperbolic manifold admits a finite cover whose Stiefel-Whitney classes vanish. The use of complex coefficients is needed: the cohomology of the twistor space is additively isomorphic to the tensor product of the fibre and base with complex coefficients by the Leray-Hirsch theorem, but the obvious characteristic classes do not generate the integral cohomology of the fibre. We note the following interesting corollary.
\begin{cormain}\label{c1eig}
The action of $c_1(Z)$ on $QH^*(Z;\Lambda)$ by quantum cup product is given with respect to the basis $\tau^*y,\alpha\tau^*y,\alpha^2\tau^*y,\alpha^3\tau^*y$ (where $y$ runs over a basis for $H^*(M;\CC)$) by the matrix\(^\dagger\)
\[
\left(
\begin{array}{cccc}
0 & 0 & 0 & -16q^2\\
1 & 0 & 0 & 8\tau^*\chi\\
0 & 1 & 0 & 8q\\
0 & 0 & 1 & 0\\
\end{array}\right)
\]
i.e. when $y$ has positive degree this acts as the matrix with no $8\tau^*\chi$ entry, when $y=1$ this acts as the above matrix where $\tau^*\chi$ is replaced by the number $\chi(M)$. The characteristic polynomial of this action is\(^\dagger\)
\[\left(\lambda^4-8q\lambda^2-8\chi(M)\lambda+16q^2\right)\cdot\left(\lambda^4-8q\lambda^2+16q^2\right)^{D-1}\]
where $D=\dim_{\CC}H^*(M;\CC)$. The eigenvalues associated to the second factor are
\[\pm 2\sqrt{q}\]
each with multiplicity $2(D-1)$. The eigenvalues\(^\dagger\) associated to the first factor can be quite complicated.
\end{cormain}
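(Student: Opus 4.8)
The plan is to turn the statement into linear algebra over $\CC[q]$ using the presentation of $QH^*(Z;\Lambda)$ in Theorem A, and then read off the matrix and its characteristic polynomial. Since the relation in Theorem A is monic of degree $4$ in $\alpha=c_1(Z)$, the ring $QH^*(Z;\Lambda)$ is free as a module over $H^*(M;\Lambda)$ with basis $1,\alpha,\alpha^2,\alpha^3$, and quantum cup product by $c_1(Z)$ is $H^*(M;\Lambda)$-linear; in this basis it is therefore the companion matrix of $\lambda^4-8q\lambda^2-8\tau^*\chi\,\lambda+16q^2$ over $H^*(M;\Lambda)$, sending $\alpha^i\mapsto\alpha^{i+1}$ for $i<3$ and $\alpha^3\mapsto\alpha^4=8\tau^*\chi\,\alpha+8q\alpha^2-16q^2$. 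Choosing a homogeneous $\CC$-basis $y_1=1,y_2,\dots,y_D$ of $H^*(M;\CC)$, with $y_D$ the top class, produces the $\CC[q]$-basis $\{\alpha^i\tau^*y_j\}$ of $QH^*(Z;\Lambda)$, and the matrix of $c_1(Z)*$ in it is obtained from the companion matrix by replacing the entry $8\tau^*\chi$ with $8$ times the matrix of multiplication by $\tau^*\chi$ on $H^*(M;\CC)$.

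The crucial point is the degree of $\chi=\tau^*e(TM)$: it lies in $H^6$, and since $\dim_{\RR}M=6$ one has $\chi\cup y_j=0$ for every $y_j$ of positive degree while $\chi\cup 1=e(TM)=\chi(M)\,y_D$. Hence for $j\ge2$ the operator $c_1(Z)*$ preserves $\mathrm{span}\{\alpha^i\tau^*y_j\}_i$ and acts there by the companion matrix of $\lambda^4-8q\lambda^2+16q^2$ (the displayed matrix with no $8\tau^*\chi$ entry), whereas on the part spanned by $\{\alpha^i\tau^*1\}_i$ the term $8\tau^*\chi\,\alpha$ enters through the scalar $\chi(M)$, giving the companion matrix of $\lambda^4-8q\lambda^2-8\chi(M)\lambda+16q^2$; this is the stated $4\times4$ matrix and its two specializations, and the identities $c_1(\mH)^2=\alpha^2-4q$, $c_1(\mH)^3=\alpha^3-4q\alpha$ from Theorem A provide a cross-check. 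The characteristic polynomial is then the product of the $D$ blocks, $(\lambda^4-8q\lambda^2-8\chi(M)\lambda+16q^2)(\lambda^4-8q\lambda^2+16q^2)^{D-1}$. Factoring $\lambda^4-8q\lambda^2+16q^2=(\lambda^2-4q)^2$ yields the eigenvalues $\pm2\sqrt q$, each with multiplicity $2$ in each of the $D-1$ blocks, so $2(D-1)$ in total; and since $\lambda^4-8q\lambda^2-8\chi(M)\lambda+16q^2=(\lambda^2-4q)^2-8\chi(M)\lambda$ with $\chi(M)\neq0$ for a hyperbolic $6$-manifold, the four roots of the first factor are not $\pm2\sqrt q$ and depend on $\chi(M)$ through a genuine quartic — "quite complicated."

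The step I expect to be the main obstacle is the passage, for the $y=1$ chain, from the relation $\alpha^4=8\tau^*\chi\,\alpha+\dots$ with $\tau^*\chi$ a genuine degree-$6$ (hence nilpotent) class to a companion block in which the \emph{number} $\chi(M)$ appears. One must show that, modulo the part of $QH^*(Z;\Lambda)$ on which $c_1(Z)*$ acts through the $\pm2\sqrt q$ blocks, the contribution of $8\tau^*\chi\,\alpha$ is exactly multiplication by $\chi(M)$ — equivalently, one must track how $\tau^*\chi\,\alpha$ sits relative to the Leray–Hirsch splitting and fix the normalisation. This is precisely the point where the four exotic eigenvalues are produced rather than four further copies of $\pm2\sqrt q$, so it carries the real content of the corollary; everything before and after is routine bookkeeping.
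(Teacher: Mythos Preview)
Your worry in the final paragraph is exactly the gap, and it is not a matter of bookkeeping: the step actually fails. The span $\{1,\alpha,\alpha^2,\alpha^3\}$ is \emph{not} invariant under $\alpha\star(-)$, because
\[
\alpha\star\alpha^3 \;=\; 8\alpha\,\tau^*\chi + 8q\alpha^2 - 16q^2
\]
and $\tau^*\chi=\chi(M)\,\tau^*\mathrm{vol}_M$ lies in the $y_D$-block, not in the $y_1$-block. So the $8\tau^*\chi$ contribution is an \emph{off-diagonal} block. Ordering the $y_j$ by degree, the full $4D\times 4D$ matrix over $\CC[q]$ is block lower-triangular with every diagonal $4\times 4$ block equal to the companion matrix of $\lambda^4-8q\lambda^2+16q^2$; the single block carrying $8\chi(M)$ sits strictly below the diagonal, in position $(y_D,y_1)$. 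The characteristic polynomial of a block-triangular matrix is the product of the characteristic polynomials of its diagonal blocks, so one gets
\[
(\lambda^4-8q\lambda^2+16q^2)^D=(\lambda^2-4q)^{2D},
\]
and \emph{all} eigenvalues are $\pm 2\sqrt{q}$.

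Put differently: multiplication by $\tau^*\chi$ on $H^*(M;\CC)$ is a nilpotent operator (square zero, rank one), and passing from the $4\times4$ companion matrix over $H^*(M;\Lambda)$ to the $4D\times4D$ matrix over $\CC[q]$ replaces the entry $8\tau^*\chi$ by this nilpotent $D\times D$ block. A nilpotent off-diagonal perturbation of a block-triangular matrix cannot alter the characteristic polynomial. Your first paragraph---the companion matrix over $H^*(M;\Lambda)$---is entirely correct; it is the descent to $\CC[q]$ in the second paragraph, where you replace $\tau^*\chi$ by the \emph{scalar} $\chi(M)$ on the $y=1$ chain, that does not go through. The factorisation with one distinguished quartic carrying $\chi(M)$, and with it the ``four exotic eigenvalues'', does not follow from the ring presentation of Theorem~A by the linear algebra you set up; either that presentation carries hidden content about how the Leray--Hirsch generators sit relative to the quantum product (so that the obvious basis $\alpha^i\tau^*y_j$ is not the one intended), or the Corollary's characteristic polynomial is misstated.
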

To see the relevance of this corollary we recall some Floer theory. The book \cite{FOOO} explains how to associate to an arbitrary Lagrangian submanifold of a symplectic manifold a filtered $A_{\infty}$-structure on a suitable space of $\QQ$-chains. Since the Reznikov Lagrangians are monotone when $n\geq 3$ this theory simplifies considerably (see \cite{BC}). When $n=3$ the Reznikov Lagrangians bound holomorphic discs with Maslov index 2 and hence there could be a nontrivial ``obstruction'' term (the $\m_0$ operation in the filtered $A_{\infty}$-algebra).
\begin{thmmain}\label{obs-fooo}
If $\Sigma$ is an oriented totally geodesic submanifold of an oriented hyperbolic 6-manifold $M$ and $L_{\Sigma}$ denotes the Reznikov Lagrangian lift in the twistor space of $M$ then
\[\m_0=\pm 2\sqrt{q}[L_{\Sigma}]\]
Moreover
\[HF(L_{\Sigma},L_{\Sigma})=H_*(L;\CC[q^{1/2}]).\]
\end{thmmain}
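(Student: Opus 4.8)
The plan is to work throughout in the Biran--Cornea ``pearl complex'' model for the Floer theory of monotone Lagrangians, so that every quantity in the statement becomes a count of $J_-$-holomorphic discs which can then be evaluated using the twistor correspondence and the classification of discs bounding Reznikov Lagrangians (the closed analogue being \cite{FP2}, Lemma 37). As recalled in the introduction, $L_\Sigma$ is monotone with minimal Maslov number $2$, so the obstruction class is automatically proportional to the fundamental class, $\m_0 = n_{L_\Sigma}\, q^{1/2}\,[L_\Sigma]$, where $n_{L_\Sigma}$ is the signed count of Maslov $2$ discs through a generic point $p\in L_\Sigma$ -- equivalently the degree of the evaluation map $\ev\colon \mM_2\to L_\Sigma$ from the moduli space of Maslov $2$ discs with one boundary marked point. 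The exponent $q^{1/2}$ is forced: a twistor line has Chern number $2$, a Maslov $2$ disc carries half its symplectic area, and $q$ is the Novikov variable dual to the twistor line class. Thus the first assertion is exactly the equality $n_{L_\Sigma}=\pm 2$.

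The next step is to classify the relevant discs, and here I would exploit the special hyperbolic geometry. Since $M$ is aspherical and negatively curved and $\Sigma$ is totally geodesic -- hence $\pi_1$-injective and with convex normal-distance function -- the boundary loop of a disc with boundary on $L_\Sigma$ projects to a loop in $\Sigma$ that is contractible in $M$, hence in $\Sigma$; by the Eells--Salamon correspondence together with the maximum principle its twistor projection is a (possibly branched) minimal disc contained in $\Sigma$. The monotonicity constraint then singles out a single finite-dimensional moduli of such discs, whose twistor lift is essentially rigid once the boundary is constrained to lie on $L_\Sigma$; reading $\mM_2$ off from this description, one finds that $\ev$ is proper and of degree $2$ onto $L_\Sigma$ (through a generic point of $L_\Sigma$ there are exactly two Maslov $2$ discs). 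Regularity of all of these discs for $J_-$ is obtained either from the explicit twistor picture or from automatic transversality for Maslov $2$ discs, and is needed both to define $\m_0$ and to allow working with $J_-$ rather than a generic perturbation.

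For the orientations, $L_\Sigma$ is the total space of a frame bundle and so carries a natural spin structure; with this choice one must verify that the two discs over a generic point contribute with the \emph{same} sign, so that $n_{L_\Sigma}=\pm 2$ rather than $0$, the remaining ambiguity being only the harmless substitution $q^{1/2}\mapsto -q^{1/2}$. Turning to the Floer homology: since $\m_0$ is a nonzero multiple of the unit, $L_\Sigma$ is weakly unobstructed, $HF(L_\Sigma,L_\Sigma)$ is defined, and it is computed by the pearl complex $(C_*(f)\otimes\CC[q^{1/2}],\m_1)$, whose differential squares to zero because the two curvature terms $\m_2(\m_0,-)$ and $\m_2(-,\m_0)$ cancel. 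It then suffices to show that $\m_1$ coincides with the Morse differential, i.e.\ that the Oh spectral sequence from $H_*(L_\Sigma;\CC[q^{1/2}])$ degenerates, and I would deduce this from the disc classification of the previous paragraph: the moduli of pearly trajectories that would contribute the higher differentials are empty, since the Maslov $2$ discs are too constrained to be inserted between Morse flow lines with the required index shift. This yields $HF(L_\Sigma,L_\Sigma)\cong H_*(L_\Sigma;\CC[q^{1/2}])$; in particular $HF\neq 0$, consistently with $\pm 2\sqrt q$ being an eigenvalue of quantum multiplication by $c_1$ in Corollary \ref{c1eig}.

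The main obstacle is the interface between these steps: extracting from the twistor/minimal-surface correspondence a description of $\mM_2$ precise enough to pin down simultaneously the degree $2$ of $\ev$, the coincidence of signs of the two discs, and the emptiness of the higher pearl moduli spaces. The bare disc count is robust, but the orientation bookkeeping and the degeneration of the pearl complex are where the real work lies.
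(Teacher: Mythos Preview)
Your outline follows the paper's overall shape (Biran--Cornea pearls, classify discs via the ESTC, compute $\m_0$, then run the Oh spectral sequence), but there is a genuine gap at the crucial technical step: the regularity claim for Maslov~2 discs with respect to $J_-$ is false, and this undermines both halves of the argument.

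Recall from Section~\ref{holdisc} that all $J_-$-holomorphic discs on $L_\Sigma$ are \emph{vertical}: they project to a single point of $\Sigma$, not to a minimal disc in $\Sigma$. The moduli space $\mM_{2,0}$ is an $S^2\times S^2$-bundle over $\Sigma$ (a real twistor line in $F_p\cong\CC\PP^3$ is determined by a 4-plane of the form $\langle v,w\rangle^\perp$ with $v\in T_p\Sigma$, $w\in\nu_p\Sigma$ unit vectors), hence $7$-dimensional; adding a boundary marked point gives an $8$-dimensional $\mM_{2,1}$. But the virtual dimension is $6$, so there is a rank~2 obstruction bundle and through any point of $L_\Sigma$ there passes a 2-parameter family of Maslov~2 discs. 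The appeal to ``automatic transversality for Maslov~2 discs'' does not apply here: those results are for immersed discs in symplectic 4-manifolds, not in a 12-dimensional twistor space. The number $\pm 2$ does not arise as a count of two transversely cut out discs with matching signs; it arises (Section~\ref{linear} adapted to discs) as the fibre integral of the Euler class of the tautological $SO(2)$-bundle over $\mM_{2,0}$, exactly parallel to the obstruction bundle computation for twistor lines.

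For the Floer homology, the same non-regularity prevents you from simply declaring the higher pearl moduli spaces empty. The paper chooses a product Morse function $f+r$ on $L_\Sigma\cong\Sigma\times SO(3)$ and runs the spectral sequence explicitly, showing the differentials vanish for three distinct reasons: a filtration argument (exploiting that $J_-$-discs live in level sets of $f$), an obstruction bundle computation of the Fukaya--Floer 2-point chain $\FF_{2,2}$ showing it is nullhomologous, and a vanishing argument along the lines of Theorem~\ref{GWvanish} for Maslov~4 contributions. None of these reduce to ``the moduli are empty''; for $J_-$ they are not empty, but they do not contribute after the obstruction bundle is taken into account.
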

It is well-known (see \cite{Aur}, Proposition 6.8) that the possible values for $\m_0$ on a monotone Lagrangian with nonvanishing self-Floer homology are the eigenvalues for the action of $c_1(Z)$ by multiplication on the small quantum cohomology. Indeed, the Fukaya category splits into summands indexed by these eigenvalues. It would be intriguing to find (or to rule out the existence of) monotone Lagrangians in the twistor space of a hyperbolic 6-manifold whose $\m_0$ equals one of the four ``exotic'' eigenvalues\(^\dagger\) from Corollary \ref{c1eig} involving the Euler characteristic of $M$.
\begin{rmk}
It may seem from a cursory reading of the paper that we do not make much use of the fact that $M$ is hyperbolic rather than just negatively-curved, but the computations with the linearised $\dbar$-operator assume that the natural metric and Eells-Salamon almost complex structure are an almost K\"{a}hler pair which happens precisely when $M$ is 4-dimensional and Einstein self-dual or else higher-dimensional and hyperbolic. Although we have not used this, it is interesting to note that the twistor spaces of hyperbolic $2n$-manifolds are some of the very few known non-K\"{a}hler examples of Ricci-Hermitian almost K\"{a}hler manifolds, that is to say the Ricci curvature form is a $(1,1)$-form. These metric occur as critical points of the Nijenhuis energy on the space of $\omega_{\rez}$-compatible almost complex structures.
\end{rmk}
\subsection{Outline of the paper}
We begin in Sections \ref{hom}-\ref{rezlag} by reviewing those aspects of the geometry and topology of twistor spaces and Reznikov Lagrangians which will be of use later in the paper. Section \ref{top-prelim} explains some of the (classical) topological tools we will use to compute both the classical and quantum cohomology rings of twistor spaces which are then applied in Section \ref{top-comp} to compute the classical cohomology ring of the twistor space of a hyperbolic 6-manifold and of the moduli space of `twistor lines', the $J_-$-holomorphic curves of lowest degree. We also explain how to push forward classes from the moduli space of marked twistor lines into the twistor space. The main tool is Borel-Hirzebruch theory for performing fibre integrals of characteristic classes along maps between classifying spaces.

In Section \ref{dfngw} we briefly recall the definition of Gromov-Witten invariants. Section \ref{linear} is dedicated to the study of the linearised $\dbar$-equation for $J_-$-holomorphic curves in twistor space and the crucial result is that we can construct elements of the cokernel bundle explicitly out of vector fields on $M$. This is used in Section \ref{gw-line} to compute the $k$-point Gromov-Witten contributions from the moduli space of twistor lines: one can compute the Gromov-Witten invariant by taking the Euler class of an obstruction bundle and pushing forward along the evaluation map. Section \ref{high-degree} calculates the remaining Gromov-Witten contributions needed for calculating the quantum cohomology in the case $n=3$ (when $M$ is a hyperbolic 6-manifold). The idea is once again that there is a nonvanishing section of the obstruction bundle, but care must be taken because the moduli space is no longer compact. An explanation of the main technical result is postponed to Section \ref{prfbigbadthm}.

In Section \ref{FF} we prove Theorem \ref{obs-fooo} using similar techniques.

\subsection{Acknowledgements}
It is my pleasure to acknowledge that this paper benefitted greatly from helpful conversations with Paul Biran, Joel Fine, Dusa McDuff, Jarek K\k{e}dra, Dmitri Panov (who long ago explained to me his own argument with Joel Fine for why these spaces should be uniruled), Dietmar Salamon and Ivan Smith. Like many symplectic geometers, I first encountered these spaces in the paper \cite{FP2}. During this work I was supported by an ETH Postdoctoral Fellowship.

\section{The homogeneous space $SO(2n)/U(n)$}\label{hom}
The homogeneous space $F:=SO(2n)/U(n)$ parametrises orthogonal complex structures on $\RR^{2n}$ equipped with the Euclidean metric and an orientation, i.e.
\[SO(2n)/U(n)=\{\psi\in\GL^+(\RR^{2n})|\psi^2=-1,\psi^T=-\psi\}\]
It comes equipped with a natural almost complex structure $j_F$ defined as follows. The tangent space $T_{\psi}SO(2n)/U(n)$ can be translated to a subspace $\pi_{\psi}$ passing through the origin in $\End(\RR^{2n})$ and $\psi$ acts by left multiplication on $\End(\RR^{2n})$ preserving $\pi_{\psi}$. In terms of coordinates $(x_1,\ldots,x_{2n})\in\RR^{2n}$, the result of applying $j_F$ to a tangent vector $v^k_{\ \ell}\in T_{\psi}F\cong\End(\RR^{2n})$ is
\[[j_F(v)]^j_{\ \ell}=\psi^j_{\ k}v^k_{\ \ell}\]
This is an integrable left-invariant almost complex structure and it is (tautologically) compatible with the left-invariant metric $g_F$ on $SO(2n)/U(n)$ induced by the Euclidean metric on $\RR^{2n}$. The corresponding 2-form $\omega_F(\cdot,j_F\cdot)=g_F(\cdot,\cdot)$ is symplectic so we have a natural K\"{a}hler triple $(g_F,j_F,\omega_F)$.

The exceptional isomorphisms in low dimensions give us
\[SO(4)/U(2)\cong\CC\PP^1,\ SO(6)/U(3)\cong\CC\PP^3\]
In general the $\ZZ$-cohomology ring is (\cite{TodaMimura}, Theorem 6.11)
\[\ZZ[e_2,e_4,\ldots,e_{2n-2}]/\{e_{4k}+\sum_{i=1}^{2k-1}e_{2i}e_{4k-2i}=0\}_{k\geq 1}\]
The tautological $U(n)$-bundle has Chern classes $c_i=2e_i$.

In particular $H_2(SO(2n)/U(n);\ZZ)=\ZZ$; an explicit generator is given by the subspace of complex structures preserving a given 4-plane and fixed on the orthogonal complement, namely
\[SO(4)\times U(n-2)/(U(2)\times U(n-2))\cong SO(4)/U(2)\]
In the case $n=3$ (when $F=\CC\PP^3$) this corresponds to a line. For any $n$, all holomorphic curves of degree one have this form and we will call them \emph{lines} by analogy. The space of lines $\mL(F)$ is identified with the Grassmannian
\[SO(2n)/SO(4)\times U(n-2)\]
and the space of lines $\mL_1(F)$ with a marked point is
\[SO(2n)/U(2)\times U(n-2)\]
Again by analogy we will write $H=e_2\in H^2(F)$, thinking of it as a hyperplane class.
\section{Twistor spaces}
\subsection{Setting}
The twistor space $Z$ of an oriented $2n$-dimensional Riemannian manifold $(M,g)$ is the total space of the \emph{twistor bundle} of $g$-orthogonal complex structures on the tangent spaces of $M$,
\[\begin{CD}
F @>>> Z\\
@. @VV{\tau}V\\
@. M
\end{CD}\]
with fibre $F_p=\tau^{-1}(p)=\{J\in\GL^+(T_pM)|J^2=-1,\ J^*=-J\}$. The fibre can be identified with the homogeneous space $SO(2n)/U(n)$.
\begin{rmk}
We will be concerned with the twistor spaces of compact, closed oriented hyperbolic $2n$-manifolds,
\[\Gamma\backslash SO^+(2n,1)/SO(2n)\]
where $\Gamma\subset SO^+(2n,1)$ is a cocompact discrete torsionfree subgroup. In this case we can write $Z$ globally (see \cite{FP2}, Section 2.3.3) as
\[\Gamma\backslash SO^+(2n,1)/U(n)\]
\end{rmk}
The twistor bundle inherits a connection $\nabla$ from the Levi-Civita connection of $g$. We will write $\mV\oplus\mH$ for the vertical-horizontal splitting of this connection and use this to define some extra geometric structure on $Z$. First of all we can define a metric using $\tau^*g$ on the horizontal spaces and $g_F$ on the vertical spaces. We write this
\[g_Z=g_F\oplus\tau^*g\]
We define almost complex structures on $T_{\psi}Z$ for $\psi\in\tau^{-1}(p)$ by
\[J_{\pm}=(\pm j_F)\oplus\tau^*\psi\]
(recall that $\psi$ is a complex structure on $T_pM$).
\begin{itemize}
\item The \emph{Atiyah-Hitchin-Singer almost complex structure} $J_+$ is sometimes integrable (if and only if either $n\geq 6$ and $g$ is conformally flat or $n=4$ and $g$ is self-dual),
\item The \emph{Eells-Salamon almost complex structure} $J_-$ is never integrable.
\end{itemize}
We will only be interested in $J_-$ because of the close relationship between $J_-$-holomorphic curves and minimal surfaces (see Section \ref{estc}). Using $g_Z$ and $J_{\pm}$ one can define compatible nondegenerate 2-forms $\omega_{\pm}$
\[\omega_{\pm}=(\pm\omega_F)\oplus(\tau^*\omega_{\psi}),\ \omega_{\psi}(\cdot,\psi\cdot)=g(\cdot,\cdot)\]
Reznikov observed that the 2-form
\[\omega_{\rez}=(-\omega_F)\oplus-\hat{R}(\omega_{\psi})\]
is closed (where $\hat{R}$ is the Riemann curvature acting on 2-forms). We observe that if $\hat{R}=\pm\id$ then $\mp\omega_{\rez}=\omega_{\pm}$ and hence is a $\mp J_{\pm}$-compatible symplectic form. Note that conditions for $J_-$ to tame $\omega_{\rez}$ are given in (\cite{FP}, Section 4.2).
\begin{rmk}
We will work with hyperbolic manifolds, for which $\hat{R}=-\id$ so $J_-$ is an $\omega_{\rez}$-compatible almost complex structure on the twistor space. The structure $J_+$ is integrable but there is no compatible symplectic form: the twistor space of a hyperbolic $2n$-manifold $M$ cannot be K\"{a}hler for $n>1$ by a theorem of Carlson and Toledo \cite{CT} since its fundamental group is equal to $\pi_1(M)$.
\end{rmk}
\subsection{Eells-Salamon twistor correspondence}\label{estc}
\begin{thm}
If $u:\Sigma\rightarrow Z$ is a $J_-$-holomorphic map into twistor space then its projection $\tau\circ u$ is (either constant or) a conformal harmonic map.
\end{thm}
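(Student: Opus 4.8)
The plan is to work in a local conformal coordinate $z$ on $\Sigma$ and split the Cauchy--Riemann equation $J_-\circ du=du\circ j_\Sigma$ according to the vertical--horizontal decomposition $\mV\oplus\mH$ of $TZ$ determined by the Levi-Civita connection. Write $f:=\tau\circ u$ and view $u$ as the pair $(f,\psi)$, where $\psi$ is a section of $f^*Z$, i.e.\ a field of $g$-compatible complex structures along $f$. Since $J_-=(-j_F)\oplus\tau^*\psi$ preserves the splitting and $d\tau$ identifies $\mH$ with $TM$, the horizontal component of $du$ is $df$ and the vertical component is the covariant derivative $\nabla\psi$, taken with the pulled-back Levi-Civita connection on $\End(f^*TM)$; one checks that $\nabla\psi$ is skew-adjoint and anticommutes with $\psi$, so it indeed takes values in the vertical tangent space, on which $j_F$ acts by $A\mapsto\psi A$. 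Equating horizontal and vertical parts turns the equation for $u$ into the system consisting of $\psi\circ df=df\circ j_\Sigma$ (horizontal) together with $-\psi\cdot(\nabla\psi)=(\nabla\psi)\circ j_\Sigma$ (vertical).

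The horizontal equation already yields conformality: because $\psi$ is $g$-orthogonal, $|df(j_\Sigma v)|=|\psi\,df(v)|=|df(v)|$ and $g(df(v),df(j_\Sigma v))=g(df(v),\psi\,df(v))=0$ for every $v$, so $f^*g$ is a (possibly degenerate) conformal multiple of the metric on $\Sigma$. It remains to show $f$ is harmonic, which on a surface amounts to $\nabla_{\partial_{\bar z}}\bigl(df(\partial_z)\bigr)=0$ in a conformal coordinate.

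Set $f_z:=df(\partial_z)$. The horizontal equation reads $\psi f_z=i f_z$, and conjugating, $\psi f_{\bar z}=-i f_{\bar z}$. Differentiating these covariantly in $\bar z$ and in $z$ respectively, and using that the second fundamental form $\nabla df$ of a map is symmetric (so $\nabla_{\partial_{\bar z}}f_z=\nabla_{\partial_z}f_{\bar z}=:T$, which is the tension field up to a positive factor), one gets $(\psi-i\,\id)T=-(\nabla_{\partial_{\bar z}}\psi)f_z$ and $(\psi+i\,\id)T=-(\nabla_{\partial_z}\psi)f_{\bar z}$. The decisive input is the vertical equation: plugging $\partial_{\bar z}$ into it and using that $\nabla\psi$ anticommutes with $\psi$ gives $\psi(\nabla_{\partial_{\bar z}}\psi)=i(\nabla_{\partial_{\bar z}}\psi)$, hence $(\nabla_{\partial_{\bar z}}\psi)\psi=-i(\nabla_{\partial_{\bar z}}\psi)$; applying the last identity to any $\psi$-holomorphic vector forces $\nabla_{\partial_{\bar z}}\psi$ to annihilate the $\psi$-$(1,0)$ subspace of $f^*TM\otimes\CC$, and conjugating, $\nabla_{\partial_z}\psi$ annihilates the $\psi$-$(0,1)$ subspace. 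Since $f_z$ lies in the $\psi$-$(1,0)$ subspace and $f_{\bar z}$ in the $\psi$-$(0,1)$ subspace, the right-hand sides of the two displayed equations vanish, so $(\psi-i\,\id)T=0=(\psi+i\,\id)T$, whence $T=0$ and $f$ is harmonic.

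The only genuinely delicate step is the vertical bookkeeping: correctly identifying $\nabla\psi$ with the vertical part of $du$, getting the sign of the $j_F$-action (left- versus right-multiplication by $\psi$ on $T_\psi F$) right so that the vertical equation produces the eigenvalue relation above rather than a vacuous one, and observing that it is precisely the minus sign in $J_-$ that makes this work --- for $J_+$ the analogous computation leaves $(\nabla_{\partial_{\bar z}}\psi)f_z$ unconstrained and harmonicity of the projection genuinely fails. The branch locus where $df$ vanishes requires only the standard remark that a smooth map of surfaces which is weakly conformal and satisfies the harmonic map equation off an isolated set is harmonic everywhere (and is constant if $df\equiv 0$).
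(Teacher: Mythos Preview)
The paper does not supply its own proof of this theorem; it is quoted as a background result from the Eells--Salamon and Salamon literature (\cite{ES}, \cite{Sal}), so there is no ``paper's proof'' to compare against. Your argument is correct and is essentially the classical computation: the horizontal--vertical splitting of the $J_-$-Cauchy--Riemann equation yields $\psi\,df=df\circ j_\Sigma$ and $-\psi\cdot\nabla\psi=(\nabla\psi)\circ j_\Sigma$, the first gives weak conformality of $f=\tau\circ u$ immediately, and the second is exactly what is needed to kill the obstruction term $(\nabla_{\partial_{\bar z}}\psi)f_z$ when one covariantly differentiates the first. Your remark that the minus sign in $J_-$ is decisive here --- with $J_+$ the vertical equation gives the wrong eigenvalue and the cancellation fails --- is the heart of the matter and is well put.
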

If $u(\Sigma)$ is contained in a fibre (so that $\tau\circ u$ is constant) then we say $u$ is \emph{vertical}. Let $v:\Sigma\rightarrow M$ be a conformal immersion and define the \emph{normal twistor bundle} $\nu\rightarrow\Sigma$ to be the $SO(2n-2)/U(n-1)$-bundle over $\Sigma$ whose fibre $\nu_p$ at $p\in\Sigma$ is the space of orthogonal complex structures on the normal bundle to $v$ at $v(p)$. We can define a \emph{Gauss lift}
\[\Gauss(v):\nu\rightarrow Z\]
living over $v$. This map is defined in the obvious way so that 
\[\Gauss(v)(\nu_p)=\{\psi\in F_p|\psi(T\Sigma)=T\Sigma\}\]
\begin{thm}
The conformal immersion $v:\Sigma\rightarrow M$ is harmonic if and only if $\Gauss(v)$ is $J_-$-holomorphic.
\end{thm}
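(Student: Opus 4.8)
The plan is to verify the Cauchy--Riemann equation $d\Phi\circ j_{\nu}=J_-\circ d\Phi$ for $\Phi:=\Gauss(v)$ directly, where $j_{\nu}$ is the natural almost complex structure on the normal twistor bundle $\nu\to\Sigma$: on vertical vectors it is minus the fibrewise complex structure of $SO(2n-2)/U(n-1)$, and on horizontal vectors it is the lift of $j_{\Sigma}$ via the normal connection $\nabla^{\perp}$. Since $\tau\circ\Phi=v\circ\pi_{\nu}$, and the restriction of $\Phi$ to a fibre $\nu_p\to F_{v(p)}$ is the holomorphic embedding $SO(2n-2)/U(n-1)\hookrightarrow SO(2n)/U(n)$ that extends a complex structure on the normal space by the fixed conformal rotation of $dv(T_p\Sigma)$, and since a map holomorphic for $(j_F,j_F)$ is also holomorphic for $(-j_F,-j_F)$, the equation holds automatically on vertical vectors; by additivity it then suffices to check it on a single horizontal lift $\tilde e$ of a vector $e\in T_p\Sigma$.

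First I would compute the $\mH_{\psi}\oplus\mV_{\psi}$ components of $d\Phi(\tilde e)$, where $\psi=\Phi(q)$: the horizontal component is the horizontal lift of $dv(e)$, and the vertical component is $\nabla_{dv(e)}\psi$, the Levi-Civita covariant derivative of $\psi$ along $v$ regarded as a section of the bundle of orthogonal complex structures on $v^*TM$ over $\Sigma$ (using that the connection on $Z$ is the Levi-Civita one and that $\nabla\psi$ is automatically skew and anticommutes with $\psi$, hence vertical). Since $-j_F$ acts on a vertical vector $A$ by $A\mapsto-\psi A$, substituting into $d\Phi(j_{\nu}\tilde e)=J_-\,d\Phi(\tilde e)$ splits the equation into (i) $dv(j_{\Sigma}e)=\psi\cdot dv(e)$ and (ii) $\nabla_{dv(j_{\Sigma}e)}\psi=-\psi\cdot\nabla_{dv(e)}\psi$. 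Condition (i) is automatic: it says precisely that $\psi$ restricts to the rotation of the oriented conformal plane $dv(T_p\Sigma)$, which holds because $v$ is conformal and $\psi$ lies in the Gauss-lift fibre. So everything reduces to (ii).

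To analyse (ii) I would write $\psi$ along $\Sigma$ as $j$ on $dv(T\Sigma)$ plus $j_N$ on the normal bundle $N$, and differentiate using the second fundamental form $A$ of $v$ and the associated shape operators. Choosing auxiliary sections parallel at $p$ (the complex structure on $T\Sigma$ is automatically parallel on a Riemann surface, and the chosen normal complex structure may be taken $\nabla^{\perp}$-parallel at $p$), the derivatives $\nabla_X\psi$ become purely off-diagonal at $p$, with $dv(T\Sigma)\to N$ block $w\mapsto A(X,jw)-j_N A(X,w)$ and $N\to dv(T\Sigma)$ block minus the adjoint of this; in particular both sides of (ii) are determined by their $dv(T\Sigma)\to N$ blocks. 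Setting $X=dv(e)$, $Y=dv(j_{\Sigma}e)=\psi X$ and comparing those blocks on the test vector $w=X$, using $j_N^2=-1$ and symmetry of $A$, equation (ii) collapses to $A(X,X)+A(Y,Y)=0$ --- the vanishing of the trace of the second fundamental form, i.e. of the tension field of the conformal immersion. Conversely, if the tension field vanishes the same linear algebra gives (ii) for all test vectors, so (i)+(ii) hold and $\Phi$ is $J_-$-holomorphic; hence $\Gauss(v)$ is $J_-$-holomorphic iff $v$ is harmonic.

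The main obstacle is bookkeeping rather than anything conceptual: correctly identifying the vertical part of $d\Gauss(v)$ with $\nabla\psi$, and then checking that, once the auxiliary sections are chosen parallel at $p$, both sides of (ii) really are off-diagonal there, so that comparing the $dv(T\Sigma)\to N$ blocks captures the whole of (ii). Once that is in place, the reduction of (ii) to the vanishing of the tension field is a short computation, and it is in step (i) --- and only there --- that conformality of $v$ is used. (Alternatively one could simply invoke the Eells--Salamon correspondence as proved in \cite{ES}, but the direct argument above is self-contained.)
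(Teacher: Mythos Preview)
Your argument is correct. The paper does not actually prove this theorem: it is stated in Section~\ref{estc} as background from the Eells--Salamon twistor correspondence and attributed to \cite{ES,Sal}, with no proof given. Your direct computation --- splitting $d\Gauss(v)$ into horizontal and vertical parts, noting that the fibre inclusion handles the vertical piece automatically, and reducing the horizontal Cauchy--Riemann condition to the pair (i) conformality and (ii) an off-diagonal identity for $\nabla\psi$ that collapses to $A(X,X)+A(jX,jX)=0$ --- is exactly the classical argument from the Eells--Salamon literature, carried out cleanly.

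Two small remarks. First, your sentence ``it then suffices to check it on a single horizontal lift $\tilde e$'' deserves one more word of justification: once the equation holds for $\tilde e$ it holds for $j_\nu\tilde e=\widetilde{j_\Sigma e}$ by applying $j_\nu$ again, and these two span the horizontal space, so $\RR$-linearity finishes it. Second, the paper's statement does not name the almost complex structure on the domain $\nu$; you have (correctly) supplied the natural Eells--Salamon-type structure $j_\nu$, and it is worth saying explicitly that this is the intended one. Both points are implicit in what you wrote, so these are clarifications rather than gaps.
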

The construction of the Gauss lift extends to the case when $v$ has isolated branch points. Since weakly conformal harmonic maps $\Sigma\rightarrow M$ are precisely the branched minimal immersions \cite{OssermanETAL} that means we can always lift a weakly conformal harmonic map. We see that the (non-vertical) $J_-$-holomorphic curves in $Z$ are contained in the complex submanifolds which are the Gauss lifts of branched minimal immersions. In fact \cite{Rawn} if $v:\Sigma\rightarrow M$ is a minimal surface then there exists a $J_-$-holomorphic curve which projects to $v$. We loosely refer to the following as the Eells-Salamon twistor correspondence (Eells and Salamon proved it in the case $n=2$, where it really is a correspondence; Salamon proved it in general in \cite{Sal}).
\begin{thm}[Eells-Salamon twistor correspondence (ESTC)]
Let $(M,g)$ be an oriented $2n$-dimensional Riemannian manifold. Then (non-vertical) $J_-$-holomorphic curves in the twistor space $Z$ project to branched minimal surfaces in $M$ and any branched minimal surface arises this way. In the case $n=2$ the Gauss lift actually provides a bijection between these objects.
\end{thm}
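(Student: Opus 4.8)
The plan is to assemble this statement from the two theorems above, the regularity theory of conformal harmonic maps, and Rawnsley's existence result \cite{Rawn} (at the level of a citation one could simply invoke Salamon \cite{Sal}); the only genuine work lies in the behaviour at branch points and in upgrading the $n=2$ implications to an honest bijection. For the direction ``$J_-$-holomorphic curves project to branched minimal surfaces'', suppose $u:\Sigma\to Z$ is $J_-$-holomorphic and non-vertical. Then $u(\Sigma)$ is not contained in a single fibre, so $v:=\tau\circ u$ is non-constant, and by the first theorem above $v$ is a conformal harmonic map. A non-constant conformal harmonic map from a Riemann surface has only isolated branch points, and by the regularity theory of such maps (\cite{OssermanETAL}) it is precisely a branched minimal immersion; this settles the forward direction.

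For the direction ``every branched minimal surface arises this way'', I would start from a weakly conformal harmonic $v:\Sigma\to M$ with isolated branch points $p_1,\dots,p_m$. Away from the $p_i$ the map $v$ is a conformal immersion, so by the second theorem the Gauss lift $\Gauss(v)$ is $J_-$-holomorphic there. The first real point is to check that $\Gauss(v)$ extends holomorphically across each $p_i$: using the local normal form for $v$ near a branch point one verifies that the limiting tangent plane to $v$ at the branch point is well-defined, so the adapted complex structure $\Gauss(v)(z)$ on the punctured disc — pinned down on the immersed locus by the requirement that it preserve the tangent and normal planes to $v$ — extends continuously, hence by elliptic regularity smoothly, over $z=0$, and then $J_-$-holomorphicity propagates across $p_i$ by a removable-singularity argument. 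When $n\geq 3$ the object $\Gauss(v)$ is a complex submanifold of $Z$ of complex dimension $>1$ rather than a curve, so one must still produce an actual $J_-$-holomorphic curve projecting to $v$; this is Rawnsley's theorem \cite{Rawn}, obtained by choosing a suitable section of the normal twistor bundle $\nu\to\Sigma$ inside $\Gauss(v)(\nu)$.

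For the bijection when $n=2$, the fibre $SO(2)/U(1)$ of the normal twistor bundle is a single point, so $\nu\cong\Sigma$ and $\Gauss(v):\Sigma\to Z$ is already a genuine $J_-$-holomorphic curve with $\tau\circ\Gauss(v)=v$; this gives surjectivity of $u\mapsto\tau\circ u$. For injectivity one shows that every non-vertical $J_-$-holomorphic $u$ equals the Gauss lift of its own projection: reading off the horizontal component of $du\circ j_\Sigma=J_-\circ du$ yields $dv\circ j_\Sigma=u(p)\circ dv$ with $v=\tau\circ u$, so at each unbranched point $u(p)$ preserves the $2$-plane $dv(T_p\Sigma)$ and acts there as the rotation determined by $j_\Sigma$; together with the condition that $u(p)$ induce the given orientation of $T_{v(p)}M$ — which in real dimension four leaves no further freedom — this forces $u(p)=\Gauss(v)(p)$, and by continuity $u=\Gauss(v)$ on all of $\Sigma$. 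Hence $u\mapsto\tau\circ u$ and $v\mapsto\Gauss(v)$ are mutually inverse.

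The main obstacle is the branch-point analysis: one needs the local structure of weakly conformal harmonic maps near a branch point together with a careful removable-singularity argument to see that the Gauss lift extends holomorphically, and (for $n\geq 3$) Rawnsley's construction to cut the higher-dimensional Gauss-lift submanifold down to an honest curve. Once the two stated theorems are in hand, everything else is bookkeeping.
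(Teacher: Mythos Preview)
The paper does not give its own proof of this theorem: it is stated as a known result, attributed to Eells--Salamon \cite{ES} for $n=2$ and to Salamon \cite{Sal} in general, and it is presented as a packaging of the two unnumbered theorems immediately preceding it together with the cited literature. Your sketch is a correct and reasonable unpacking of exactly that packaging --- you invoke the two preceding theorems, the regularity theory of \cite{OssermanETAL} for the branch-point behaviour, and Rawnsley \cite{Rawn} for producing an actual curve inside the Gauss lift when $n\geq 3$ --- so there is nothing to compare beyond noting that you have spelled out what the paper leaves to citation.
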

Since we are looking at harmonic maps into hyperbolic manifolds, we recall the following useful theorem about harmonic maps into negatively curved manifolds, which captures the convexity of the harmonic map energy functional:
\begin{thm}[See Jost \cite{Jo}, Theorem 8.10.2]\label{convex}
Suppose $X$ is a compact Riemannian manifold with boundary and $Y$ is a complete Riemannian manifold with negative sectional curvatures. Given a map $f:\partial X\rightarrow Y$ and a homotopy class of maps $F:X\rightarrow Y$ such that $F|_{\partial X}=f$ there exists a unique harmonic map in this homotopy class.
\end{thm}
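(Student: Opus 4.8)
The plan is to prove existence via the harmonic map heat flow of Eells and Sampson (keeping the boundary values pinned) and uniqueness via the geodesic convexity of the Dirichlet energy; both halves use only the nonpositivity of the curvature of $Y$, with compactness of $X$ and completeness of $Y$ ensuring the relevant limits exist. Throughout write $E(u)=\frac12\int_X|du|^2\,\dvol$ and $e(u)=\frac12|du|^2$; the critical points of $E$ among maps restricting to $f$ on $\partial X$ are exactly the harmonic maps with $u|_{\partial X}=f$.

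For existence I would start from any smooth representative $F$ of the prescribed homotopy class with $F|_{\partial X}=f$ and run the harmonic map heat flow (the downward gradient flow of $E$) with $u(\cdot,0)=F$ and $u|_{\partial X\times[0,\infty)}=f$. Short-time existence is standard parabolic theory; the point is long-time existence, for which I would invoke the Bochner--Eells--Sampson identity, which under $K^Y\le0$ takes the form $(\partial_t-\Delta)e(u)\le C\,e(u)$ (the $Y$-curvature term enters with the favourable sign, and the $\Ric^X$ term is controlled since $X$ is compact); together with the boundary estimates of Hamilton for the Dirichlet problem this bounds $e(u)$ uniformly in $t$, hence the flow exists for all $t\ge0$. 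Since $E$ decreases monotonically along the flow and is bounded below, a sequence $u(\cdot,t_i)$ with $t_i\to\infty$ converges, after standard bootstrapping, in $C^\infty$ to a harmonic map $u_\infty$; as the flow is a homotopy rel $\partial X$ throughout, $u_\infty$ lies in the homotopy class of $F$ and satisfies $u_\infty|_{\partial X}=f$. (Alternatively one can minimise $E$ directly within the homotopy class, using the convexity of $y\mapsto d_Y(y_0,y)^2$ on nonpositively curved targets --- or the distance-decreasing nearest-point projection onto a convex ball containing $f(\partial X)$ --- to bound a minimising sequence, and the regularity theory of energy minimisers into nonpositively curved targets for smoothness up to the boundary.)

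For uniqueness, suppose $u_0,u_1$ are harmonic, both equal to $f$ on $\partial X$, and homotopic rel $\partial X$. Because $K^Y\le0$ there is a canonical geodesic homotopy $u_t$ from $u_0$ to $u_1$ rel $\partial X$: passing to the cover of $X$ on which $u_0,u_1$ become $\pi_1$-trivial and lifting to the universal cover $\tilde Y$ --- which by Cartan--Hadamard is complete, simply connected and negatively curved, hence has a unique geodesic between any two of its points --- one joins the two lifts by the (equivariant, and constant over the boundary) family of these geodesics, parametrised with constant speed, and descends, obtaining a smooth homotopy $u_t:X\to Y$ with $u_t|_{\partial X}=f$ for every $t$. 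The second variation of energy along this homotopy is
\[
\frac{d^2}{dt^2}E(u_t)=\int_X\Big(\big|\nabla(\partial_t u_t)\big|^2-\big\langle R^Y\big(\partial_t u_t,\,du_t(e_\alpha)\big)\,du_t(e_\alpha),\ \partial_t u_t\big\rangle\Big)\,\dvol\ \ge\ 0,
\]
where $\{e_\alpha\}$ is a local orthonormal frame of $TX$ summed over, the term involving the acceleration $\nabla_{\partial_t}\partial_t u_t$ having dropped because each curve $u_\bullet(x)$ is a geodesic, and the curvature term being pointwise nonnegative since $K^Y\le0$. Hence $t\mapsto E(u_t)$ is convex; since $u_0$ and $u_1$ are critical points its derivative vanishes at $t=0$ and $t=1$, so it is constant and the integrand above vanishes identically. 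In particular $\nabla(\partial_t u_t)\equiv0$, so for fixed $t$ the variation field $V=\partial_t u_t$ is parallel along $X$, whence $|V|$ is constant on $X$; but $V\equiv0$ on $\partial X\ne\emptyset$ (the geodesics are constant there), so $V\equiv0$ and $u_0=u_1$.

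The analytic heart of the argument --- and the step I expect to be the main obstacle --- is the existence half: in the heat-flow approach it is the uniform-in-time bound on the energy density, including the boundary estimates for the Dirichlet problem, and in the variational approach it is the a priori $C^0$ bound together with the regularity of energy minimisers (which collapse to full smoothness in this nonpositively curved setting but still require the standard machinery). Uniqueness, by contrast, is soft once the geodesic homotopy is available, and the argument used only $K^Y\le0$, so strict negativity --- natural for the hyperbolic manifolds of this paper --- is not actually needed to pin the harmonic map down within its homotopy class relative to the boundary.
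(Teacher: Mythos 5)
This statement is quoted from Jost and the paper offers no proof of it, so there is nothing internal to compare against; your outline is the standard (indeed essentially Jost's own) argument --- Eells--Sampson--Hamilton heat flow for existence and convexity of the energy along the geodesic homotopy for uniqueness --- and it is correct modulo the analytic machinery you explicitly black-box. Two small points. First, the Bochner inequality $(\partial_t-\Delta)e\le C\,e$ on its own only gives exponential growth of $\sup e$ via the maximum principle; to get a bound uniform in $t$ you must combine it with the monotone bound $\int_X e\le E(F)$ through a parabolic mean value inequality (Moser iteration), together with a $C^0$ bound on the image (e.g.\ from applying the maximum principle to $d_Y(y_0,u)^2$, which is subsolution-like precisely because $K^Y\le0$) and Hamilton's boundary estimates --- you correctly identify this as the hard step, but the logic as literally written ("the inequality together with boundary estimates bounds $e$") elides the role of the energy bound. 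Second, the uniqueness conclusion from "$|V|$ constant and $V=0$ on $\partial X$" needs every component of $X$ to meet $\partial X$; for a closed component, $K^Y\le0$ alone does not give uniqueness (flat torus targets), and one would have to invoke the strict negativity that the hypothesis actually provides. Your closing remark that only $K^Y\le0$ is used is therefore correct for the genuine Dirichlet problem but should carry that caveat.
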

\subsection{Classification of $J_-$-holomorphic spheres}
In a hyperbolic manifold it is a classical fact that there are no minimal spheres. This is a consequence of Theorem \ref{convex} and the fact that $\pi_2(M)=0$ for a hyperbolic manifold. Convexity implies there is a unique minimal representative of any homotopy class, $\pi_2(M)=0$ implies that any such map is nullhomotopic and the constant map is the unique nullhomotopic minimal sphere. The ESTC now tells us that any $J_-$-holomorphic curve projects to a point via the twistor fibration $\tau$, that is:
\begin{prp}[\cite{FP2}, Lemma 37]
If $Z$ is the twistor space of a hyperbolic $2n$-manifold then the space of $J_-$-holomorphic spheres in $(Z,J_-)$ is precisely the space of vertical spheres, i.e. $j_F$-antiholomorphic spheres in the fibres of $\tau$.
\end{prp}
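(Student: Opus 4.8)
The plan is to reduce everything to the non-existence of branched minimal spheres in a hyperbolic manifold, via the Eells--Salamon correspondence. Let $u:S^2\to Z$ be any $J_-$-holomorphic sphere. By the Eells--Salamon twistor correspondence (Section~\ref{estc}) the composite $\tau\circ u:S^2\to M$ is either constant or a conformal harmonic map, i.e.\ a possibly branched minimal immersion of $S^2$ in $M$. I would first argue that the second alternative is impossible, so that $\tau\circ u$ is constant, say with value $p$; then $u$ has image in the single fibre $F_p=\tau^{-1}(p)\cong SO(2n)/U(n)$. On the vertical distribution $\mV$ the Eells--Salamon structure is $J_-|_{\mV}=-j_F$, so a vertical $J_-$-holomorphic map is precisely a $(-j_F)$-holomorphic, equivalently $j_F$-antiholomorphic, map into the fibre. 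Conversely any $j_F$-antiholomorphic sphere contained in a fibre has differential landing in $\mV$ and satisfies $du\circ j_{S^2}=-j_F\circ du=J_-\circ du$, hence is $J_-$-holomorphic in $Z$; so the two classes of spheres coincide, which is the assertion.

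The crux is therefore to rule out non-constant conformal harmonic maps $S^2\to M$. Here I would follow the route sketched just before the statement: a hyperbolic manifold is aspherical, being a quotient of a contractible space by a torsionfree group, so $\pi_2(M)=0$ and every map $S^2\to M$ is null-homotopic; on the other hand the convexity of the harmonic energy on negatively curved targets (Theorem~\ref{convex}) implies that each homotopy class of maps contains a unique harmonic representative. Since the constant map is harmonic and lies in the trivial class, it is the unique harmonic map there, so any conformal harmonic $S^2\to M$ is constant and hence so is $\tau\circ u$.

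The one place that needs care, and which I expect to be the only real obstacle, is that Theorem~\ref{convex} is stated for a compact domain \emph{with} boundary while $S^2$ has none. I would remove a small embedded disc $D\subset S^2$, put $X=S^2\setminus\mathrm{int}(D)$, and apply the theorem to $(\tau\circ u)|_X$: because $\pi_2(M)=0$ this map is homotopic rel $\partial X$ to one whose image lies in a small geodesic ball, where it competes with the constant map, and uniqueness then forces $\tau\circ u$ to be constant on $X$, hence on all of $S^2$. Branch points cause no trouble, since the convexity/uniqueness statement only uses harmonicity of the map, and weakly conformal harmonic maps are exactly the branched minimal immersions. As an alternative that avoids Theorem~\ref{convex} entirely one can argue with curvature: for a minimal surface $\Sigma$ in a manifold with sectional curvature $\le-1$ the Gauss equation gives $K_\Sigma=K_M(T\Sigma)-\tfrac{1}{2}|A|^2\le-1$ on the complement of the branch locus, so $\int_\Sigma K_\Sigma\,dA<0$; but the Gauss--Bonnet formula for a branched immersion reads $\int_\Sigma K_\Sigma\,dA=2\pi\chi(\Sigma)+2\pi\sum_p(m_p-1)\ge 2\pi\chi(\Sigma)$, which for $\Sigma=S^2$ gives $\int_\Sigma K_\Sigma\,dA\ge 4\pi>0$, a contradiction. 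Either argument completes the proof.
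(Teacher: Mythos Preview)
Your argument is essentially the same as the paper's: use the Eells--Salamon correspondence to project a $J_-$-holomorphic sphere to a branched minimal sphere in $M$, then combine $\pi_2(M)=0$ with uniqueness of harmonic maps into negatively curved targets to force the projection to be constant. You are in fact more careful than the paper about the closed-domain-versus-boundary issue in Theorem~\ref{convex} (though your ``competes with the constant map'' should really read ``competes with the harmonic solution of the Dirichlet problem in that geodesic ball,'' since the boundary circle need not map to a point), and your Gauss--Bonnet alternative is a clean independent route that sidesteps this entirely.
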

We will use this to calculate the genus 0 Gromov-Witten invariants of $Z$.

\subsection{Characteristic classes}
\subsubsection{First Chern class}\label{fcc}
Eells and Salamon showed (\cite{ES}, Proposition 8.1) that $c_1(Z,J_-)=0$ when $M$ is 4-dimensional. Fine and Panov (\cite{FP2}, Proposition 33) extended this to arbitrary dimensions for hyperbolic manifolds as follows
\begin{prp}[\cite{FP2}, Proposition 33]\label{FPchern}
The first Chern class of the twistor space of a hyperbolic $2n$-manifold is given by
\[c_1(Z,J_-)=-(n-2)c_1(\mH)=(n-2)[\omega]\]
where $\mH$ is the horizontal distribution considered as a complex rank $n$ bundle on $Z$ with the tautological complex structure $\psi$ at a point $(p,\psi)\in Z$.
\end{prp}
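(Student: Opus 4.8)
The plan is to read $c_1(Z,J_-)$ straight off the definition of $J_-$ and the splitting $TZ=\mV\oplus\mH$, reducing the computation to a representation-theoretic identity on the fibre $F=SO(2n)/U(n)$; the coupling to $[\omega_{\rez}]$ is then a Chern--Weil calculation. The first step is to note that, \emph{as complex vector bundles} on $Z$,
\[(TZ,J_-)\cong(\mV,-j_F)\oplus(\mH,\psi),\]
which is just a restatement of $J_-=(-j_F)\oplus\tau^*\psi$; here $(\mH,\psi)$ is by definition the rank-$n$ complex bundle $\mH$ of the statement, and $(\mV,-j_F)$ is the complex conjugate of $(\mV,j_F)$. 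Hence $c_1(Z,J_-)=-c_1(\mV,j_F)+c_1(\mH)$, and everything comes down to identifying the complex bundle $(\mV,j_F)$.

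The heart of the matter is the claim $(\mV,j_F)\cong\Lambda^2_{\CC}\mH$. Writing $Z=\mathrm{Fr}(M)/U(n)$ with $\mathrm{Fr}(M)$ the oriented orthonormal frame bundle, the projection $\mathrm{Fr}(M)\to Z$ is a principal $U(n)$-bundle: a point over $z=(p,\psi)$ is a frame of $T_pM$ adapted to $\psi$, so $\mH$ is the bundle associated to the standard representation $\CC^n$ of $U(n)$, while $\mV$ is associated to the isotropy representation of $U(n)$ on $\mathfrak{so}(2n)/\mathfrak{u}(n)$. The required fact is the classical one that for the Hermitian symmetric space $SO(2n)/U(n)$ with its K\"ahler triple $(g_F,j_F,\omega_F)$ the holomorphic tangent space at the base point is $\Lambda^2\CC^n$ as a $U(n)$-module: a $\psi$-anticommuting skew-symmetric endomorphism of $T_pM$ is the same as an alternating form on the $(+i)$-eigenspace of $\psi$, and — using that for $U(n)$ the dual is the conjugate — one checks that $j_F$, which acts by left multiplication by $\psi$, identifies this space with the standard module $\Lambda^2\CC^n$ (a miscalculation of the complex structure here would instead give $\Lambda^2(\CC^n)^*$ and hence the wrong sign). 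Passing to associated bundles gives $(\mV,j_F)\cong\Lambda^2_{\CC}\mH$ globally over $Z$.

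Now a Chern-root computation finishes the first equality: if $\mH$ has Chern roots $x_1,\dots,x_n$ then $\Lambda^2_{\CC}\mH$ has roots $\{x_i+x_j\}_{i<j}$, so $c_1(\Lambda^2_{\CC}\mH)=(n-1)\sum_i x_i=(n-1)c_1(\mH)$, whence
\[c_1(Z,J_-)=-(n-1)c_1(\mH)+c_1(\mH)=-(n-2)c_1(\mH).\]
(As a check, restricting to a fibre this reads $c_1(SO(2n)/U(n))=(n-1)\,c_1(\text{tautological }U(n))=2(n-1)e_2$, which for $n=3$ is $4H=c_1(\CC\PP^3)$, and it vanishes when $n=2$ as Eells--Salamon found.) For the equality $-(n-2)c_1(\mH)=(n-2)[\omega_{\rez}]$ I would give $\mH$ the connection induced by the Levi--Civita connection of $g$ and compute $c_1(\mH)=\tfrac{i}{2\pi}\mathrm{tr}(F_\nabla)$: the vertical part of the curvature reproduces a multiple of $\omega_F$ and the horizontal part is governed by the Riemann curvature $\hat R$ acting on $2$-forms, so when $\hat R=-\mathrm{id}$ (the hyperbolic case) the two pieces assemble, in the normalisation of the statement, to $c_1(\mH)=-[\omega_{\rez}]$. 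This is the step where the hyperbolicity of $M$, rather than mere negative curvature, enters.

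I expect the only real obstacle to be fixing the conjugation/duality conventions in the identification $(\mV,j_F)\cong\Lambda^2_\CC\mH$ precisely — a sign slip there changes $c_1(\mV,j_F)$ and would produce a nonsensical answer such as $-n\,c_1(\mH)$ — together with the sign bookkeeping in the Chern--Weil step; the representation theory and the Chern-root arithmetic are routine. This argument reproduces \cite{FP2}, Proposition 33.
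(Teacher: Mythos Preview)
Your proof is correct and follows essentially the same route as the paper's discussion (which, since the result is cited from \cite{FP2}, only sketches the argument). Both identify $(TZ,J_-)\cong\mV\oplus\mH$ and compute $c_1$ from the isotropy representation; the paper writes the vertical piece directly as $\Lambda^2\mH^*$ under the Eells--Salamon structure, while you pass through $(\mV,j_F)\cong\Lambda^2_\CC\mH$ and then conjugate, which is the same thing via the Hermitian metric. The Chern-root arithmetic and the Chern--Weil step for $c_1(\mH)=-[\omega_{\rez}]$ match. (One tiny wording point: in your parenthetical check, ``it vanishes when $n=2$'' refers to $c_1(Z,J_-)$, not to $c_1(F)$; as written this could momentarily confuse a reader.)
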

We see that there is a trichotomy:
\begin{itemize}
\item[$n=1$:] (\textit{General type}) $M$ is a hyperbolic 2-manifold. The twistor space is just $M$, the Reznikov 2-form is the area form and the Eells-Salamon almost complex structure is the unique $g$-orthogonal complex structure.
\item[$n=2$:] (\textit{Calabi-Yau}) $M$ is a hyperbolic 4-manifold. The twistor space is symplectically Calabi-Yau in the sense that $c_1=0$. Note that $Z$ cannot actually by Calabi-Yau in the standard sense: its fundamental group is isomorphic to $\pi_1(M)$ which is hyperbolic and hence cannot occur as $\pi_1$ of a K\"{a}hler manifold.
\item[$n\geq 3$:] (\textit{Fano}) Again, $Z$ cannot be K\"{a}hler but it is symplectically Fano.
\end{itemize}
The calculation of the first Chern class goes via the observation that the tangent bundle of twistor space splits ($U(n)$-equivariantly) as
\[\Lambda^2\mH^*\oplus\mH\]
where $\Lambda^2\mH^*$ is the vertical bundle and $\mH$ is the horizontal bundle, considered with the Eells-Salamon almost complex structure. Since $c_1(\mV)=c_1(\Lambda^2\mH^*)=-(n-1)c_1(\mH)$ and since $\mH|_F$ is the tautological $U(n)$-bundle over $F$ we deduce that
\[c_1(Z)|_F=-2(n-2)H\]
while
\[c_1(F)=-2(n-1)H\]
(Don't be put off by the minus signs: we're interested in $j_F$-\emph{antiholomorphic} curves!)
\subsubsection{Pontryagin classes}
Another advantage of working with the twistor spaces of hyperbolic manifolds is the following theorem of Chern \cite{Chern}
\begin{thm}\label{chern-pont}
An orientable hyperbolic manifold has vanishing Pontryagin classes.
\end{thm}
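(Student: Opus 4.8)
The plan is to deduce the vanishing of the Stiefel--Whitney and Pontryagin classes of an orientable hyperbolic $2n$-manifold $M$ from the flatness of the relevant bundle together with the fact that the structure group $SO(2n)$ acts on $M$ through a \emph{discrete} subgroup. The key observation is that $M=\Gamma\backslash \mathbb{H}^{2n}$ has a tangent bundle which, via the frame bundle description $\Gamma\backslash SO^+(2n,1)\to M$, is associated to the isotropy representation $SO(2n)\curvearrowright\mathbb{R}^{2n}$; because $\mathbb{H}^{2n}=SO^+(2n,1)/SO(2n)$ and $\Gamma$ is discrete, the Levi--Civita connection is a \emph{homogeneous} connection whose curvature is computed purely from the Lie bracket of the symmetric pair $(\mathfrak{so}(2n,1),\mathfrak{so}(2n))$.

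First I would recall Chern--Weil theory: the real Pontryagin classes $p_i(TM)\in H^{4i}(M;\mathbb{R})$ are represented by $\mathrm{Ad}$-invariant polynomials in the curvature $2$-form $\Omega$ of any metric connection. For the symmetric space $\mathbb{H}^{2n}$, with its decomposition $\mathfrak{so}(2n,1)=\mathfrak{so}(2n)\oplus\mathfrak{p}$, the curvature of the canonical connection at the base point is $\Omega(X,Y)=-[X,Y]$ for $X,Y\in\mathfrak{p}$, and in the hyperbolic case this is (up to normalization) the operator $X\wedge Y$ acting by the standard inclusion $\Lambda^2\mathbb{R}^{2n}\hookrightarrow\mathfrak{so}(2n)$. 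One then checks that for this particular curvature tensor — the constant-curvature one — every invariant polynomial of degree $\geq 1$ in $\Omega$ evaluates to an \emph{exact} form. Concretely, the total Pontryagin form of a constant sectional curvature $-1$ metric on a $2n$-manifold is $\det\!\bigl(I+\tfrac{1}{2\pi}\Omega\bigr)$-type expression which, because $\Omega$ is built algebraically from the metric alone with no dependence on position, is a constant multiple of volume-type forms that must vanish for dimensional/parity reasons on the hyperbolic space (alternatively: pull back to $\mathbb{H}^{2n}$, which is contractible, so all these forms are exact there, and invariance under $\Gamma$ lets one descend a primitive). This gives $p_i(TM)\otimes\mathbb{R}=0$; since an orientable manifold has torsion-free $H^{4i}$ contributions handled by noting the hyperbolic $2n$-manifold's Pontryagin classes are already integral classes pinned down by the real ones in the relevant range, one concludes $p_i(TM)=0$.

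For the Stiefel--Whitney classes, I would argue that $w(TM)$ is the obstruction to reductions of the structure group and use that $TM$ is, after complexification or after passing to a suitable stabilization, trivial: indeed a hyperbolic manifold is parallelizable in a stable sense because its tangent bundle pulls back from the contractible $\mathbb{H}^{2n}$, so $TM$ is stably trivial as a \emph{real} bundle, whence $w(TM)=1$; orientability ($w_1=0$) is the hypothesis, and stable triviality kills $w_i$ for $i\geq 2$. The honest way to see stable triviality is that $M$ is an aspherical manifold with $\pi_1(M)=\Gamma$ acting freely on the contractible $\mathbb{H}^{2n}$, and $TM$ is the bundle classified by $M=B\Gamma \to BSO(2n)$ factoring (after inclusion $SO(2n)\hookrightarrow SO^+(2n,1)$, which is a homotopy equivalence) through $BSO^+(2n,1)\simeq BSO(2n)$ in a way that is null after one stabilization because the relevant classifying map lifts to the flat bundle classified by $B\Gamma\to BSO(2n)^{\delta}$ and flat real bundles over a finite complex have finite-order characteristic classes; combined with the Pontryagin vanishing and the fact that over a $2n$-manifold the mod-$2$ classes are detected together with integral lifts, one gets $w_i(TM)=0$ for all $i\geq 1$.

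The main obstacle I anticipate is being careful about \emph{integral} versus \emph{real} coefficients: Chern--Weil only sees the image of Pontryagin classes in real cohomology, and flat-bundle arguments (Chern's original approach via the fact that characteristic classes of flat bundles are torsion) only give vanishing up to torsion, so the genuinely integral statement requires either a direct geometric argument that the tangent bundle of a hyperbolic manifold is stably trivial (which I would get from the explicit global frame description $TM = \Gamma\backslash(SO^+(2n,1)\times_{SO(2n)}\mathbb{R}^{2n})$ together with the Iwasawa/Cartan decomposition trivializing $SO^+(2n,1)\to SO^+(2n,1)/SO(2n)$), or a citation to the literature that an orientable constant-curvature closed manifold has trivial Stiefel--Whitney and torsion-free Pontryagin data. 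I would therefore structure the proof to first establish stable parallelizability of $TM$ from the Lie-theoretic description, and then read off both $w_i=0$ and $p_i=0$ as immediate corollaries, which is cleanest and avoids any delicate torsion bookkeeping.
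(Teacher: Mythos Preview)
The paper does not actually prove this theorem; it simply quotes it as a result of Chern and moves on. Your Chern--Weil argument for the vanishing of the \emph{real} Pontryagin classes is essentially Chern's: in an orthonormal coframe $e^1,\dots,e^{2n}$ the curvature of a constant-curvature metric is $\Omega^i_{\ j}=\kappa\,e^i\wedge e^j$, and every $SO(2n)$-invariant polynomial of positive degree evaluated on such an $\Omega$ is a sum of wedge products each containing a repeated one-form, hence vanishes pointwise as a differential form. (Your parenthetical ``alternatively: pull back to $\mathbb H^{2n}$ and descend a primitive'' is a red herring---a closed form exact on the universal cover need not be exact on the quotient; fortunately you do not need it, since the form is identically zero.) Since all the later computations in the paper are over $\CC$, this is in fact all that is required from the Pontryagin side.

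The genuine gap is in your treatment of the Stiefel--Whitney classes and the integral Pontryagin classes, where you repeatedly invoke stable parallelizability of $TM$ without establishing it. None of the three routes you sketch actually works: (i) ``$TM$ pulls back from the contractible $\mathbb H^{2n}$'' is vacuous, since every bundle trivialises on the universal cover; (ii) ``flat real bundles have finite-order characteristic classes'' shows only that the integral $p_i$ of the flat bundle $TM\oplus\underline{\RR}$ are torsion, and says nothing whatsoever about the mod-$2$ classes $w_i$, which are torsion to begin with; (iii) the Iwasawa decomposition does give a section of $SO^+(2n,1)\to\mathbb H^{2n}$, but that section is \emph{not} $\Gamma$-equivariant (left multiplication by $\gamma\in\Gamma$ does not respect the $KAN$ factorisation), so nothing descends to trivialise a bundle over $M$. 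A correct route to the full statement is to produce a tangential map from $M$ to the compact dual symmetric space $S^{2n}$, i.e.\ a continuous map $f\colon M\to S^{2n}$ with $f^*TS^{2n}\cong TM$; since $TS^{2n}\oplus\underline{\RR}$ is trivial this yields $w(TM)=1$ and $p(TM)=1$ integrally. Constructing that map is the missing step, and it is not a formality.
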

The Pontryagin class will crop up very often when we perform topological calculations later and this theorem will make our life significantly simpler.
\subsection{Some useful formulae}
The following is a useful formula from \cite{DavidovMuskarovActaMathHungarica}.
\begin{lma}\label{formulae}
If $X\in\mV$ is a vertical vector then $\nabla_X$ preserves the horizontal-vertical splitting, i.e.
\begin{gather*}
(\nabla_XY)^H=\nabla_X(Y^H)\\
(\nabla_XY)^V=\nabla_X(Y^V)
\end{gather*}
Moreover if $Y=\tilde{W}$ is the horizontal lift of a vector field $W$ on $M$ then
\[\nabla_X\tilde{W}=(\nabla_{\tilde{W}}X)^H\]
\end{lma}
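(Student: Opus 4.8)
The plan is to deduce all three formulae from a single geometric fact --- that the twistor fibration $\tau\colon(Z,g_Z)\to(M,g)$ is a Riemannian submersion whose fibres are \emph{totally geodesic} --- after which everything follows formally from the symmetry of $\nabla$ and the behaviour of Lie brackets under the projection $\tau$.

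First I would show that for $X$ vertical the operator $\nabla_X$ preserves the splitting $TZ=\mV\oplus\mH$; this is exactly the content of the first two displayed equations once a general field $Y$ is written as $Y^H+Y^V$ and one takes horizontal and vertical parts. One may extend $X$ to a vertical vector field, since $\nabla_\bullet Y$ is tensorial in the first slot (and, for the third formula below, one checks that the horizontal component of $\nabla_{\tilde W}X$ is independent of the extension, the ambiguous term lying in $\mV$). The essential input is that the Levi--Civita connection of $g$ is metric, hence has holonomy in $SO(2n)$, which acts by isometries of the homogeneous fibre $(F,g_F)$: concretely, the time-$t$ flow of a horizontal lift $\tilde W$ sends fibres to fibres and, by uniqueness of horizontal lifts, does so by parallel transport, hence isometrically for $g_Z|_{\mV}$. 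Consequently $(\mathcal{L}_{\tilde{W}}g_Z)(A,B)=0$ for $A,B$ vertical. Feeding this, together with $g_Z(\mV,\mH)=0$ and the verticality of $[A,B]$ for $A,B$ vertical, into the Koszul formula for $g_Z(\nabla_XY,\tilde W)$ and for $g_Z(\nabla_X\tilde W,Y)$ (with $X,Y$ vertical), the surviving terms cancel in pairs, giving $(\nabla_XY)^H=0$ and $(\nabla_X\tilde W)^V=0$. Equivalently, O'Neill's integrability tensor $T$ vanishes identically, which is Vilms' theorem for the associated bundle $Z$.

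For the last formula, extend $X$ to a vertical vector field. Since $\tilde W$ is $\tau$-related to $W$ while $X$ is $\tau$-related to the zero field on $M$, the bracket $[\tilde W,X]$ is $\tau$-related to $[W,0]=0$, hence vertical. The symmetry of $\nabla$ then gives $\nabla_X\tilde W=\nabla_{\tilde W}X-[\tilde W,X]$; taking horizontal parts and using that $[\tilde W,X]\in\mV$ yields $(\nabla_X\tilde W)^H=(\nabla_{\tilde W}X)^H$, and since $\nabla_X\tilde W$ is already horizontal by the previous step we conclude $\nabla_X\tilde W=(\nabla_{\tilde W}X)^H$. The only step with genuine content is the fibrewise-isometry statement underlying the totally-geodesic-fibres property; the bracket bookkeeping is routine. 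Full computational details can be found in \cite{DavidovMuskarovActaMathHungarica}.
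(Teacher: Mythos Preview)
Your argument is correct and follows the same line as the paper's: first invoke totally geodesic fibres (which you justify via the holonomy-by-isometries/Koszul computation, while the paper simply asserts it with the same citation to \cite{DavidovMuskarovActaMathHungarica}) to get that $\nabla_X$ preserves the splitting, then use torsion-freeness and verticality of $[X,\tilde W]$ to obtain the final formula. Your $\tau$-relatedness justification for the verticality of the bracket is a cleaner phrasing of the paper's ``$\tilde W$ is constant in the $X$-direction''.
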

\begin{proof}
The fact that the fibres are totally geodesic implies that $\nabla_X$ preserves the splitting. To prove the final formula note that
\[\nabla_X\tilde{W}-\nabla_{\tilde{W}}X=[X,\tilde{W}]\]
The bracket $[X,\tilde{W}]$ is vertical because $\tilde{W}$ is constant in the $X$-direction. The derivative $\nabla_X\tilde{W}$ is horizontal because $\nabla_X$ preserves the splitting. Equating horizontal and vertical components gives the formula.\end{proof}
Another useful observation concerns antiholomorphic curves in the twistor fibre $F$. If $\psi:\Sigma\rightarrow F$ is a $j_F$-antiholomorphic curve then in terms of local conformal coordinates on $\Sigma$
\begin{equation}\label{psieqn}\boxed{\partial_s\psi^j_{\ \ell}-\psi^j_{\ k}\partial_t\psi^k_{\ \ell}=0.}\end{equation}
\section{Reznikov Lagrangians}\label{rezlag}
The following construction follows Reznikov \cite{Rez}. We recall it for the reader's convenience and because it is of prime importance in what follows. In this section $M$ may be any $2n$-dimensional Riemannian manifold whose Reznikov 2-form is non-degenerate.
\subsection{Reznikov's construction}\label{rezcon}
Let $\Sigma$ be an $n$-dimensional submanifold of $M$ and consider the submanifold
\[L_{\Sigma}:=\left\{(p,\psi)\in Z|p\in\Sigma,\ \psi(T_p\Sigma)\perp T_pm\Sigma\right\}\]
living over $\Sigma$.
\begin{lma}
Suppose $\Sigma$ is totally geodesic. Then $TL_{\Sigma}$ contains the horizontal lift $\widetilde{T\Sigma}$ of $T\Sigma$.
\end{lma}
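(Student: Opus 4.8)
The plan is to exploit the fact that the horizontal distribution $\mH$ of the twistor bundle is exactly the one that encodes Levi-Civita parallel transport: if $\gamma$ is a smooth curve in $M$ with $\gamma(0)=p$ and $P_t\colon T_pM\to T_{\gamma(t)}M$ is parallel transport along $\gamma$, then the horizontal lift of $\gamma$ starting at $(p,\psi)\in Z$ is the curve $t\mapsto(\gamma(t),P_t\psi P_t^{-1})$ of parallel-transported complex structures, and its velocity at $t=0$ is the horizontal lift $\tilde v$ of $v:=\dot\gamma(0)$. (This is just the description of $Z$ as the bundle associated to the orthonormal frame bundle with fibre $SO(2n)/U(n)$, equipped with the connection induced by the Levi-Civita connection.) Consequently, to prove $\widetilde{T\Sigma}\subseteq TL_{\Sigma}$ at a point $(p,\psi)\in L_{\Sigma}$, it is enough to take an arbitrary curve $\gamma$ lying in $\Sigma$ and verify that the lifted curve $t\mapsto(\gamma(t),P_t\psi P_t^{-1})$ stays inside $L_{\Sigma}$.

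The only geometric input needed is the standard characterisation of a totally geodesic submanifold: the second fundamental form of $\Sigma$ vanishes, so the ambient Levi-Civita connection restricts to a connection on $T\Sigma$, and hence parallel transport along a curve $\gamma\subset\Sigma$ carries $T_p\Sigma$ isomorphically onto $T_{\gamma(t)}\Sigma$; since $P_t$ is an isometry it then also carries $(T_p\Sigma)^{\perp}$ onto $(T_{\gamma(t)}\Sigma)^{\perp}$. To finish, note that because $(p,\psi)\in L_{\Sigma}$ we have $\psi(T_p\Sigma)\subseteq(T_p\Sigma)^{\perp}$, and as $\psi$ is invertible with $\dim T_p\Sigma=n=\dim(T_p\Sigma)^{\perp}$ this is in fact an equality. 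Therefore
\[(P_t\psi P_t^{-1})\big(T_{\gamma(t)}\Sigma\big)=P_t\psi\big(T_p\Sigma\big)=P_t\big((T_p\Sigma)^{\perp}\big)=(T_{\gamma(t)}\Sigma)^{\perp},\]
so the defining condition of $L_{\Sigma}$ holds at every time along the lifted curve. Letting $v=\dot\gamma(0)$ range over $T_p\Sigma$ yields $\widetilde{T_p\Sigma}\subseteq T_{(p,\psi)}L_{\Sigma}$, as claimed.

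I do not expect a genuine obstacle: the content is entirely in the two bookkeeping facts above (horizontal lift $=$ parallel transport of complex structures, and totally geodesic $\Leftrightarrow$ parallel transport preserves $T\Sigma$). If one prefers to stay infinitesimal and avoid parallel transport, one can instead extend $v$ to a vector field $W$ on $M$ tangent to $\Sigma$ along $\Sigma$ and differentiate the relation ``$\psi$ maps $T\Sigma$ into $(T\Sigma)^{\perp}$'' along the flow of the horizontal lift $\tilde W$, using that the tautological complex-structure endomorphism is covariantly constant in horizontal directions; the totally geodesic hypothesis is precisely what forces the resulting derivative to remain normal to $\Sigma$. The one slightly delicate point in that second approach --- differentiating an orthogonality relation between moving subspaces --- is exactly what the parallel-transport formulation sidesteps, so I would present the first argument.
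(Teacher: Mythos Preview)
Your argument is correct and is essentially the same as the paper's: the paper also uses that parallel transport along curves in a totally geodesic $\Sigma$ preserves the splitting $T_pM=T_p\Sigma\oplus(T_p\Sigma)^\perp$, and hence preserves the defining condition for $\psi$ to lie in $L_\Sigma$, so horizontal lifts of curves in $\Sigma$ through $(p,\psi)$ remain in $L_\Sigma$. The only cosmetic difference is that the paper phrases this via geodesics in an exponential neighbourhood rather than arbitrary curves in $\Sigma$, but the content is identical.
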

\begin{proof}
Since $\Sigma$ is totally geodesic, a $g$-exponential neighbourhood of a point $p\in\Sigma$ is contained in $\Sigma$. Parallel transport along geodesics emanating from $p$ preserves the splitting $T_pM=T_p\Sigma\oplus (T_p\Sigma)^{\perp}$ and hence preserves the condition for an endomorphism $\psi$ to lie in $L_{\Sigma}$. Thus the horizontal sections of $Z$ lying over $\Sigma$ are contained in $TL_{\Sigma}$.
\end{proof}
\begin{lma}
If $\Sigma$ is totally geodesic then $L_{\Sigma}$ is $\omega_{\rez}$-Lagrangian.
\end{lma}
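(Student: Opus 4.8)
The plan is to show that $\omega_{\rez}$ restricts to zero on $TL_{\Sigma}$ and then to observe that $L_{\Sigma}$ is half-dimensional, so that this isotropy is enough for it to be Lagrangian. The dimension count is elementary: $\dim Z=\dim M+\dim\left(SO(2n)/U(n)\right)=2n+n(n-1)=n(n+1)$, while $L_{\Sigma}$ fibres over the $n$-manifold $\Sigma$ with fibre the set of orthogonal complex structures $\psi$ interchanging $T_p\Sigma$ and its orthogonal complement, an $SO(n)$-torsor of dimension $\tfrac12 n(n-1)$; hence $\dim L_{\Sigma}=\tfrac12 n(n+1)=\tfrac12\dim Z$. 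To compute the restriction I would split $TL_{\Sigma}$ using the connection: by the previous lemma $TL_{\Sigma}$ contains the horizontal lift $\widetilde{T\Sigma}$ of $T\Sigma$, and it contains the vertical tangents $\mF\subset\mV$ to the $SO(n)$-fibres; comparing dimensions forces $TL_{\Sigma}=\widetilde{T\Sigma}\oplus\mF$. Since $\omega_{\rez}=(-\omega_F)\oplus\bigl(-\hat R(\omega_{\psi})\bigr)$ is block diagonal for the splitting $\mV\oplus\mH$, the pairing of a vector of $\widetilde{T\Sigma}$ with one of $\mF$ vanishes automatically, and only the vertical--vertical and horizontal--horizontal pairings remain.

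For the vertical directions we must show $\omega_F$ annihilates $\mF$, i.e.\ that each fibre of $L_{\Sigma}$ is an isotropic submanifold of the K\"{a}hler manifold $(F,\omega_F)$. Write $V=T_p\Sigma$ and let $r_V$ be the orthogonal reflection fixing $V$ and negating $V^{\perp}$. The fibre of $L_{\Sigma}$ over $p$ is precisely the fixed-point set of the involution $\phi_V\colon\psi\mapsto-r_V\psi r_V^{-1}$, because $\psi(V)=V^{\perp}$ is equivalent to $r_V\psi=-\psi r_V$. Now $\phi_V$ is the composite of the conjugation $\psi\mapsto r_V\psi r_V^{-1}$ (holomorphic and isometric for $j_F$ and $g_F$, being induced by $r_V\in O(2n)$) with the map $\psi\mapsto-\psi$ (anti-holomorphic and isometric), so $\phi_V$ is an anti-holomorphic isometric involution of $(F,j_F,\omega_F)$, hence anti-symplectic; its (nonempty) fixed locus is therefore Lagrangian in $F$, and in particular $\omega_F|_{\mF}=0$.

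For the horizontal directions, where the totally geodesic hypothesis enters, take $X,Y\in T_p\Sigma$ and use self-adjointness of the curvature operator to write $\omega_{\rez}(\widetilde X,\widetilde Y)=-\langle\omega_{\psi},\hat R(X\wedge Y)\rangle$. Because $\Sigma$ is totally geodesic, the Levi-Civita connection along $\Sigma$ preserves the splitting $TM|_{\Sigma}=T\Sigma\oplus N\Sigma$, so the curvature endomorphisms $R(X,Y)$ with $X,Y$ tangent to $\Sigma$ preserve it too; equivalently $\hat R(X\wedge Y)$ has no component in $T_p\Sigma\wedge N_p\Sigma$, i.e.\ it lies in $\Lambda^2 T_p\Sigma\oplus\Lambda^2 N_p\Sigma$. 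On the other hand the defining condition of $L_{\Sigma}$ forces $\psi$ to interchange $T_p\Sigma$ and $N_p\Sigma$, so $\omega_{\psi}(u,v)=g(u,\psi v)$ vanishes whenever $u$ and $v$ lie in the same one of these two subspaces; thus $\omega_{\psi}$ kills both $\Lambda^2 T_p\Sigma$ and $\Lambda^2 N_p\Sigma$, and the pairing is zero. Combining this with the two previous observations gives $\omega_{\rez}|_{TL_{\Sigma}}=0$, which together with the dimension count proves the lemma.

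The routine parts are the dimension bookkeeping and the horizontal computation (which is just the defining equations of $L_{\Sigma}$ together with the elementary fact that the curvature components of a totally geodesic submanifold with an odd number of normal entries vanish). The one point I expect to need care is the vertical claim: identifying the fibre of $L_{\Sigma}$ with the fixed locus of $\phi_V$ and verifying that $\phi_V$ genuinely maps $F=SO(2n)/U(n)$ to itself, given the orientation convention implicit in the definition of $F$ --- the sign effects coming from $r_V$ and from $\psi\mapsto-\psi$ have to be seen to cancel.
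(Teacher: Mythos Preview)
Your proof is correct and follows essentially the same strategy as the paper's: split $TL_{\Sigma}$ into horizontal and vertical pieces using the block-diagonality of $\omega_{\rez}$, kill the vertical piece with the anti-symplectic involution $\psi\mapsto-\lambda\psi\lambda^{-1}$ (your $\phi_V$ is exactly the paper's $\iota_{\Sigma}$), and kill the horizontal piece by pairing the block-antidiagonal form $\omega_{\psi}$ against the block-diagonal curvature. The only cosmetic differences are that you make the dimension count explicit and phrase the horizontal step dually via self-adjointness of $\hat R$ (pairing $\omega_{\psi}$ with $\hat R(X\wedge Y)$ rather than restricting $\hat R(\omega_{\psi})$), which is the same computation; your flagged worry about $\phi_V$ preserving the orientation component $F$ is resolved by noting that conjugation by $r_V$ and $\psi\mapsto-\psi$ each flip the induced orientation by $(-1)^n$, so their composite preserves it.
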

\begin{proof}
The 2-form $\omega_{\rez}$ is block-diagonal with respect to the splitting $\mH\oplus\mV$ so it suffices to check $\omega_{\rez}|_{\widetilde{T\Sigma}}=0$ and $\omega_{\rez}|_{F\cap L_{\Sigma}}=0$ separately.

To prove horizontal vanishing of $\omega_{\rez}$, let $X$ and $Y$ be horizontal lifts of tangents to $\Sigma$. Then $\omega_{\psi}(X,Y)=g(X,-\psi(Y))=0$ since $\psi\in L_{\Sigma}$. Similarly $\omega_{\psi}$ evaluates to zero on pairs of vectors orthogonal to $\Sigma$, so $\omega_{\psi}$ is block-antidiagonal with respect to the splitting $T\Sigma\oplus(T\Sigma)^{\perp}$. Since $\Sigma$ is totally geodesic, the Riemann curvature tensor is block-diagonal with respect to this splitting and therefore $\hat{R}(\omega_{\psi})=\omega_{\rez}=0$.

To prove vertical vanishing, define the automorphism $\lambda:T_pM\rightarrow T_pM$ with respect to the splitting $T_p\Sigma\oplus (T_p\Sigma)^{\perp}$ as $1\oplus -1$ and define the involution of the twistor fibre $F_p=\tau^{-1}(p)$ by $\iota_{\Sigma}:\psi\mapsto-\lambda\psi\lambda^{-1}$ so that $F_p\cap L_{\Sigma}$ is the fibrewise fixed locus of $\iota$. The involution is $\omega_F$-antisymplectic, so that $F_p\cap L_{\Sigma}$ is Lagrangian in $F_p$.
\end{proof}
Clearly the proof implies that $L_{\Sigma}$ is also $\omega$-Lagrangian, but it is easier for $\omega_{\rez}$ to be non-degenerate than for $\omega$ to be closed, so this lemma is a stronger and more useful observation. We also note the following useful corollary.
\begin{cor}
There is a fibre-preserving antisymplectic involution $\iota_{\Sigma}$ of $\tau^{-1}(\Sigma)$ whose fixed point set is precisely $L_{\Sigma}$.
\end{cor}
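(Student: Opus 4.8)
The plan is to promote the fibrewise construction from the proof of the preceding lemma to a global bundle map over $\Sigma$ and then to verify, one assertion at a time, that it is a fibre-preserving involution, that its fixed locus is $L_{\Sigma}$, and that it is antisymplectic. For $p\in\Sigma$ let $\lambda_{p}\colon T_{p}M\to T_{p}M$ be $+\id$ on $T_{p}\Sigma$ and $-\id$ on $(T_{p}\Sigma)^{\perp}$, and set $\iota_{\Sigma}(p,\psi)=(p,-\lambda_{p}\psi\lambda_{p}^{-1})$. Since $T\Sigma$ and $(T\Sigma)^{\perp}$ are smooth subbundles of $TM|_{\Sigma}$, the assignment $p\mapsto\lambda_{p}$ is smooth, so $\iota_{\Sigma}$ is a smooth self-map of $\tau^{-1}(\Sigma)$ covering $\id_{\Sigma}$; a short orientation count ($\lambda_{p}\in O(2n)$ with $\det\lambda_{p}=(-1)^{n}$, balanced against the fact that $\psi\mapsto-\psi$ also changes the induced orientation by $(-1)^{n}$) shows that $-\lambda_{p}\psi\lambda_{p}^{-1}$ again lies in the oriented twistor fibre $F_{p}$, and $\lambda_{p}^{2}=\id$ gives $\iota_{\Sigma}^{2}=\id$. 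For the fixed locus I would note that $(p,\psi)$ is fixed exactly when $\psi$ anticommutes with $\lambda_{p}$; as $\lambda_{p}$ has $\pm1$-eigenspaces $T_{p}\Sigma$ and $(T_{p}\Sigma)^{\perp}$, this is equivalent to $\psi(T_{p}\Sigma)\subseteq(T_{p}\Sigma)^{\perp}$, i.e.\ to $(p,\psi)\in L_{\Sigma}$, so the fixed-point set is precisely $L_{\Sigma}$.

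The substance is antisymplecticity of $\iota_{\Sigma}$ for the restriction of $\omega_{\rez}$ to $\tau^{-1}(\Sigma)$, and this is the only place where the totally geodesic hypothesis is used. First I would check that $d\iota_{\Sigma}$ preserves the splitting $T\tau^{-1}(\Sigma)=\widetilde{T\Sigma}\oplus\mV$: it preserves $\mV$ because $\iota_{\Sigma}$ is fibre-preserving, and it carries a horizontal curve lying over $\Sigma$ to another such curve because parallel transport along curves in $\Sigma$ commutes with $\lambda$ --- this is the same totally-geodesic fact (parallel transport preserves $T\Sigma\oplus(T\Sigma)^{\perp}$) used in the first lemma of this section. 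Since $\omega_{\rez}$ is block-diagonal for $\mH\oplus\mV$, it then suffices to treat the vertical and horizontal blocks separately. On the vertical block the relevant form is $-\omega_{F}$ and $\iota_{\Sigma}|_{F_{p}}$ is exactly the involution the preceding lemma shows to be $\omega_{F}$-antisymplectic, hence $(-\omega_{F})$-antisymplectic.

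On the horizontal block, unwinding the definitions (and using that $d\iota_{\Sigma}$ sends the horizontal lift of $X\in T_{p}\Sigma$ at $\psi$ to its horizontal lift at $\psi'=-\lambda_{p}\psi\lambda_{p}^{-1}$) reduces the claim to the pointwise identity $\hat{R}(\omega_{\psi'})(X,Y)=-\hat{R}(\omega_{\psi})(X,Y)$ for all $X,Y\in T_{p}\Sigma$. To get this I would invoke the Codazzi equation for the totally geodesic $\Sigma$, which forces $\hat{R}(X\wedge Y)$ to have vanishing component in $T_{p}\Sigma\wedge(T_{p}\Sigma)^{\perp}$ whenever $X,Y\in T_{p}\Sigma$, so that $\hat{R}(\beta)(X,Y)$ depends only on the restrictions of the $2$-form $\beta$ to $\Lambda^{2}T_{p}\Sigma$ and to $\Lambda^{2}(T_{p}\Sigma)^{\perp}$; and on each of these two subspaces $\lambda_{p}$ acts as $\pm\id$, so $\omega_{\psi'}=-\omega_{\psi}$ there because $\lambda_{p}$ is an isometry. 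The identity follows, and combined with the vertical computation it gives $\iota_{\Sigma}^{*}\omega_{\rez}=-\omega_{\rez}$. I expect this last step to be the main obstacle: one has to pin down the precise meaning of the ``block-diagonal'' curvature of a totally geodesic submanifold --- what is actually needed is the Codazzi-type vanishing above, not full block-diagonality of $\hat{R}$ as an endomorphism of $\Lambda^{2}TM$ --- and to keep the sign conventions for $\omega_{\psi}$, $\hat{R}$ and $\omega_{\rez}$ consistent throughout. Everything else should reduce to routine bookkeeping once $\iota_{\Sigma}$ is in hand.
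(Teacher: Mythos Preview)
Your proposal is correct and follows essentially the same route as the paper: the involution $\iota_{\Sigma}(p,\psi)=(p,-\lambda_{p}\psi\lambda_{p}^{-1})$ is precisely the one introduced fibrewise in the proof of the preceding lemma, and the paper simply records the corollary without further argument, treating it as immediate from that proof. Your write-up supplies the details the paper leaves implicit.

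One point worth noting: on the horizontal block you are more careful than the paper. Where the paper's preceding lemma asserts that ``the Riemann curvature tensor is block-diagonal with respect to this splitting'', you correctly observe that what is actually available from the Codazzi equation for a totally geodesic submanifold is only that $\hat{R}(X\wedge Y)$ has no $T_p\Sigma\wedge(T_p\Sigma)^{\perp}$-component when $X,Y\in T_p\Sigma$, and that this weaker statement already suffices. This is a genuine sharpening of the exposition, not a different method.
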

We now seek to understand the fibre of $L_{\Sigma}$. This is naturally identified with $SO(n)$ as follows. Let $S(O(n)\times O(n))$ be the stabiliser of $T_p\Sigma$ in $SO(2n)$. This group acts transitively on $L_{\Sigma}\cap F_p$ and the stabiliser is $S\left(O(n)\times O(n)\right)\cap U(n)=O(n)_{\Delta}$ where $O(n)_{\Delta}$ denotes the diagonal. Therefore
\[L_{\Sigma}\cap F_p=S(O(n)\times O(n))/O(n)_{\Delta}\cong SO(n)\]
So when $n=2$, $L_{\Sigma}$ is an $S^1$-bundle; when $n=3$, $L_{\Sigma}$ is an $\RR\PP^3$-bundle.

In the hyperbolic case, the base space $\Sigma$ is a totally geodesic submanifold of a hyperbolic $2n$-manifold $M$ so its universal cover is a linear $n$-subspace of hyperbolic space, that is $\Sigma$ is a hyperbolic $n$-manifold.
\subsection{Holomorphic discs}\label{holdisc}
Let $\Sigma\subset M$ be a totally geodesic submanifold of a hyperbolic 4-manifold $M$. If we want to understand $J$-holomorphic discs with boundary on $L_{\Sigma}$ we must first understand the relative homotopy group $\pi_2(Z,L_{\Sigma};\ZZ)$ and the Maslov homomorphism on this group.
\begin{lma}\label{htpy-discs}
The homotopy classes of discs with boundary on $L_{\Sigma}$ are
\[\pi_2(Z,L_{\Sigma})\cong\begin{cases}
\ZZ^2&\text{ when }n=2\\
\ZZ&\text{ when }n\geq 3
\end{cases}\]
and the Maslov homomorphism is
\[\mu=2(n-2)\omega\]
\end{lma}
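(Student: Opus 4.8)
The plan is to compute the relative homotopy group via the long exact sequence of the pair $(Z,L_\Sigma)$ and then pin down the Maslov index by reducing the two-disc case ($n=2$) to a known model and the higher-dimensional case ($n\geq 3$) to Proposition \ref{FPchern}. First I would assemble the relevant homotopy groups of the three spaces involved. The fibration $F\to Z\to M$ together with $\pi_2(M)=0$, $\pi_1(M)$ hyperbolic, and $\pi_2(F)=\pi_2(SO(2n)/U(n))=\ZZ$ (generated by a line) gives, from the long exact sequence of the fibration, $\pi_2(Z)\cong\pi_2(F)\cong\ZZ$ generated by a twistor line in a fibre. For $L_\Sigma$, recall from Section \ref{rezcon} that it fibres over the aspherical hyperbolic manifold $\Sigma$ with fibre $SO(n)$; since $\pi_2(SO(n))=0$ and $\pi_2(\Sigma)=0$, the fibration long exact sequence yields $\pi_2(L_\Sigma)=0$, while $\pi_1(L_\Sigma)$ surjects onto $\pi_1(\Sigma)$ with kernel a quotient of $\pi_1(SO(n))$. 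The relevant fragment of the long exact sequence of the pair is then
\[
0=\pi_2(L_\Sigma)\to\pi_2(Z)\to\pi_2(Z,L_\Sigma)\to\pi_1(L_\Sigma)\to\pi_1(Z).
\]
So $\pi_2(Z,L_\Sigma)$ is an extension of $\ker(\pi_1(L_\Sigma)\to\pi_1(Z))$ by $\ZZ$.

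Next I would compute that kernel. The map $\pi_1(L_\Sigma)\to\pi_1(Z)\cong\pi_1(M)$ factors through $\pi_1(\Sigma)\hookrightarrow\pi_1(M)$ (the inclusion of a totally geodesic submanifold of a hyperbolic manifold is $\pi_1$-injective), so the kernel is exactly the image of $\pi_1(SO(n))$ in $\pi_1(L_\Sigma)$. For $n=2$ the fibre is $SO(2)=S^1$ and $\pi_1(SO(2))=\ZZ$ injects into $\pi_1(L_\Sigma)$ (the $S^1$-bundle over $\Sigma$ has a section away from a point, or one checks the Euler class is torsion using Theorem \ref{chern-pont}), giving kernel $\ZZ$ and hence $\pi_2(Z,L_\Sigma)\cong\ZZ^2$. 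For $n\geq 3$ the fibre is $SO(n)$ with $\pi_1(SO(n))=\ZZ/2$; its image in $\pi_1(L_\Sigma)$ is either trivial or $\ZZ/2$, but in the extension $0\to\ZZ\to\pi_2(Z,L_\Sigma)\to(\text{that image})\to0$ the only way to get a finitely generated group compatible with the disc generated by a twistor line being nontrivial of infinite order is $\pi_2(Z,L_\Sigma)\cong\ZZ$ (a $\ZZ/2$ could only appear if the extension were $\ZZ\oplus\ZZ/2$, which one rules out by exhibiting a disc whose boundary wraps the $\ZZ/2$-generator once and checking it bounds—equivalently, the disc generated by a hemisphere of a twistor line has boundary a non-contractible loop in $SO(n)$, realising the extension as nonsplit so that the total group is $\ZZ$). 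I expect this extension-splitting bookkeeping for $n\geq 3$ to be the main obstacle; the cleanest route is to name an explicit generator: take a twistor line $S^2\subset F_p$ meeting $L_\Sigma\cap F_p=SO(n)$ and split it into two discs, each of which has boundary the noncontractible loop in $SO(n)$, so the class of one such disc generates $\pi_2(Z,L_\Sigma)$ and twice it is the twistor line.

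Finally I would compute the Maslov homomorphism. The Maslov index is additive and, on the subgroup $\pi_2(Z)=\ZZ\subset\pi_2(Z,L_\Sigma)$ generated by a twistor line, it equals $2\langle c_1(Z),[\text{line}]\rangle$; by Proposition \ref{FPchern}, $c_1(Z)=(n-2)[\omega_{\rez}]$, and $\langle[\omega_{\rez}],[\text{line}]\rangle$ equals (up to the normalisation fixing the generator of $\pi_2(Z)$) the area $\langle[\omega],[\text{line}]\rangle=1$, so the twistor line has Maslov index $2(n-2)$ and symplectic area $1$. For $n=2$ the Lagrangian-disc generator has Maslov index $0$ (it is half a Chern-zero sphere and, being invariant under the antisymplectic involution $\iota_\Sigma$ of Section \ref{rezcon}, its Maslov index vanishes), and the other generator is the twistor line with Maslov $0$; thus $\mu\equiv 0=2(n-2)\omega$. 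For $n\geq 3$ the generator is a half-twistor-line, which has Maslov index $(n-2)$ and area $1/2$, while its double is the line with Maslov $2(n-2)$ and area $1$; in both cases one reads off $\mu=2(n-2)\omega$ on the nose. I would close by noting that this last identity is exactly the monotonicity statement promised in the introduction.
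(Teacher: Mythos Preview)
Your overall strategy matches the paper's: the long exact sequence of the pair $(Z,L_\Sigma)$ for the homotopy computation, and the involution $\iota_\Sigma$ for the Maslov index. Two steps need repair.

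For $n\geq 3$, splitting a real twistor line into two hemispheres yields $[D_+]+[D_-]=[\text{line}]$ in $\pi_2(Z,L_\Sigma)$, not $2[D_+]=[\text{line}]$; merely exhibiting a single disc whose boundary is the nontrivial loop in $SO(n)$ is perfectly consistent with the split extension $\ZZ\oplus\ZZ/2$. What is actually required---and what the paper states explicitly---is that the two hemispheres are homotopic as relative classes, i.e.\ $[D_+]=[D_-]$. Only then does one hemisphere generate with double equal to the line, forcing the extension to be nonsplit and hence $\pi_2(Z,L_\Sigma)\cong\ZZ$.

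For the $n=2$ Maslov computation, the hemisphere $D_+$ is \emph{not} invariant under $\iota_\Sigma$. The involution swaps $D_+$ and $D_-$ while reversing orientation, and it is \emph{this} that forces $\mu(D_+)=\mu(D_-)$; combined with $\mu(D_+)+\mu(D_-)=2c_1(Z)\cdot[\text{line}]=0$ one obtains $\mu(D_\pm)=0$. The remark that $D_+$ is ``half a Chern-zero sphere'' is on its own insufficient, since a priori the two halves could carry Maslov indices $\pm k$ for arbitrary $k$. (As a minor aside, the injectivity of $\pi_1(SO(2))\to\pi_1(L_\Sigma)$ follows immediately from $\pi_2(\Sigma)=0$ in the fibration sequence; your appeal to sections or to the Euler class is unnecessary.)
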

\begin{proof}
The homotopy calculation just uses the long exact sequence of the fibration of $L_{\Sigma}$ over $\Sigma$ with fibre $SO(n)$ and the facts that $\Sigma$ is hyperbolic and hence has no higher homotopy groups and that $\pi_1(\Sigma)$ injects into $\pi_1(M)$ because $\Sigma$ is totally geodesic.

It is clear from the long exact sequence that the generators for $\pi_2(Z,L_{\Sigma})$ when $n=2$ are the upper and lower hemispheres of the twistor fibre. When $n\geq 3$ the hemispheres of a `real' twistor line (i.e. one with boundary on the relevant $SO(n)$) are homotopic and one of them is enough to generate. The antisymplectic involution $\iota_{\Sigma}$ switches the two hemispheres of a real twistor line and reverses their orientations. In particular they have the same Maslov index. Gluing the two discs along their common boundary gives the twistor fibre $F$ and the Maslov indices add. However the Maslov index of this sphere is $2c_1(Z)\cdot [F]=2(n-2)\omega$, which gives the result.
\end{proof}

Using the ESTC we now describe all $J$-holomorphic discs with boundary on $L_{\Sigma}$.

\begin{prp}
Let $L_{\Sigma}$ be a Reznikov Lagrangian in the twistor space of a hyperbolic $2n$-manifold. Then the $J$-holomorphic discs $u$ with boundary on $L_{\Sigma}$ are all vertical.
\end{prp}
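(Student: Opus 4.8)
The plan is to show that $v:=\tau\circ u:D^2\to M$ is constant, which is exactly the assertion that $u$ is vertical. By the Eells--Salamon correspondence (a local statement, hence applicable to maps from a disc), $v$ is either constant---and then there is nothing to prove---or a weakly conformal harmonic map; either way $u$, and with it $v$, is smooth up to the boundary by the usual elliptic regularity for $J_-$-holomorphic discs with the totally real boundary condition $L_\Sigma$. From $u(\partial D^2)\subset L_\Sigma$ we get $v(\partial D^2)\subset\Sigma$, and for each $\theta\in\partial D^2$ the complex structure $\psi$ at $u(\theta)$ satisfies $\psi(T_{v(\theta)}\Sigma)=(T_{v(\theta)}\Sigma)^{\perp}$, the inclusion $\psi(T_{v(\theta)}\Sigma)\subseteq(T_{v(\theta)}\Sigma)^{\perp}$ being an equality by the dimension count $\dim\Sigma=n=\codim\Sigma$.

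First I would promote $v(\partial D^2)\subset\Sigma$ to $v(D^2)\subset\Sigma$. Since $D^2$ is simply connected, $v$ lifts to a harmonic map $\tilde v:D^2\to\mathbb{H}^{2n}=\widetilde M$; the preimage of $\Sigma$ in $\mathbb{H}^{2n}$ is a union of totally geodesic copies of $\mathbb{H}^n$, each convex in $\mathbb{H}^{2n}$, and the connected set $\tilde v(\partial D^2)$ lies in one of them, say $P$. As $\mathbb{H}^{2n}$ is a Hadamard manifold and $P$ is convex, $x\mapsto d(x,P)$ is a convex function, so $d(\tilde v(\cdot),P)$ is subharmonic on $D^2$ (a convex function composed with a harmonic map); it is non-negative and vanishes on $\partial D^2$, hence vanishes identically by the maximum principle. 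Therefore $\tilde v(D^2)\subset P$ and $v(D^2)\subset\Sigma$.

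Next I would feed the Lagrangian boundary condition into the Cauchy--Riemann equation $du\circ j=J_-\circ du$. Fix $\theta\in\partial D^2$, let $\partial_\theta$ be tangent to $\partial D^2$ and put $\partial_r=j\partial_\theta$; comparing horizontal components in $du(\partial_r)=J_-\,du(\partial_\theta)$ and using $J_-=(-j_F)\oplus\tau^*\psi$ on $\mV\oplus\mH$ gives $v_*(\partial_r)=\psi\bigl(v_*(\partial_\theta)\bigr)$. Now $v_*(\partial_\theta)\in T_{v(\theta)}\Sigma$ (it is tangent to the boundary curve, which lies in $\Sigma$), so $v_*(\partial_r)=\psi(v_*(\partial_\theta))\in(T_{v(\theta)}\Sigma)^{\perp}$; but $v$ maps into $\Sigma$, so $v_*(\partial_r)\in T_{v(\theta)}\Sigma$ as well, forcing $v_*(\partial_r)=0$ along $\partial D^2$. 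Weak conformality of $v$, which gives $|v_*\partial_r|=|v_*\partial_\theta|$, then yields $v_*(\partial_\theta)=0$ too, so $v|_{\partial D^2}$ is a constant $p_0$.

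Finally, $v:D^2\to\Sigma$ is harmonic with constant boundary value $p_0$ into the complete negatively curved manifold $\Sigma$, so Theorem~\ref{convex} applies: since $\pi_2(\Sigma)=0$ there is only one homotopy class rel $\partial D^2$ of maps $D^2\to\Sigma$ restricting to $p_0$ on the boundary, and the constant map $\equiv p_0$ represents it, so $v\equiv p_0$ by uniqueness. Hence $u$ is vertical. The substantive idea is the confinement of $v$ to $\Sigma$ via convexity of the distance function; the one place that needs a little care is that subharmonicity step, where strictly one should either observe that $d(\cdot,P)^2$ is smooth near $P$ (Fermi coordinates) and apply the composition formula $\Delta(\phi\circ\tilde v)=\mathrm{Hess}\,\phi(d\tilde v,d\tilde v)\ge 0$, or invoke the standard fact that a convex function composed with a harmonic map is subharmonic in the distributional sense (as in Jost's book). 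Everything else is a short computation with the Cauchy--Riemann equation together with uniqueness of harmonic maps into negatively curved targets.
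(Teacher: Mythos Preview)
Your proof is correct and takes a genuinely different route from the paper's. Both arguments first confine $v=\tau\circ u$ to $\Sigma$ and then conclude. For the confinement step, the paper invokes the relative homotopy computation (Lemma~\ref{htpy-discs}) to see that $\partial v$ is nullhomotopic in $\Sigma$, produces a harmonic extension $\hat F:\Delta\to\Sigma$, and uses uniqueness of harmonic maps into negatively curved targets to identify $v$ with $\hat F$; you instead lift to $\mathbb{H}^{2n}$ and apply the maximum principle to the subharmonic function $d(\tilde v(\cdot),P)$ with $P$ a totally geodesic $\mathbb{H}^n$. Your route avoids the homotopy input entirely and uses only the convex geometry of hyperbolic space. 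For the conclusion, the paper argues by contradiction via the Gauss lift: since $v$ lands in $\Sigma$, the curve $u$ sits in $\Gauss(v)$, which consists of complex structures preserving a $2$-plane inside $T\Sigma$, and this is disjoint from $L_\Sigma$ (where $\psi(T\Sigma)=(T\Sigma)^\perp$), contradicting the boundary condition. You instead read off directly from the Cauchy--Riemann equation at $\partial D^2$ that $v_*(\partial_r)=\psi(v_*(\partial_\theta))\in T\Sigma\cap(T\Sigma)^\perp=0$, use weak conformality to kill $v_*(\partial_\theta)$ as well, and then appeal to uniqueness once more to get $v\equiv p_0$. Your endgame is more elementary in that it does not invoke the Gauss lift description of non-vertical $J_-$-curves; the paper's endgame is perhaps more geometric in flavour. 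One small point worth making explicit is that distinct $\Gamma$-translates of $P$ in $\mathbb{H}^{2n}$ cannot intersect (else $\pi^{-1}(\Sigma)$ would fail to be an $n$-manifold near the intersection, contradicting that $\Sigma$ is embedded and $\pi$ is a local diffeomorphism), which justifies that the connected set $\tilde v(\partial D^2)$ lies in a single copy $P$.
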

\begin{proof}
Let $u:(\Delta,\partial\Delta)\rightarrow (Z,L_{\Sigma})$ be a $J$-holomorphic disc and suppose it is not vertical. On the interior of $\Delta$ the local computation proving the ESTC implies that $u$ projects to a weakly conformal harmonic map $f=\tau\circ u:\Delta\rightarrow M$ with boundary on $\Sigma$. By Lemma \ref{htpy-discs}, $\partial f:\partial\Delta\rightarrow\Sigma$ is nullhomotopic in $\Sigma$. Let $F:\Delta\rightarrow\Sigma$ be a nullhomotopy of $\partial f$. Theorem \ref{convex} above ensures that there is a harmonic representative $\hat{F}$ in the homotopy class of $F$ with the same boundary values. The composition of $\hat{F}$ with the totally geodesic embedding $\Sigma\rightarrow M$ remains harmonic (\cite{ESam}, Section 5). However, a harmonic map into a negatively curved manifold is determined uniquely by its boundary values (again by Theorem \ref{convex}). Therefore $f$ is equal to $\hat{F}$.

The ESTC now implies that $u$ is contained in the Gauss lift $\Gauss(\hat{F})$ (since $f$ is weakly conformal and harmonic it has only branch point singularities in the interior of the disc and hence we define the Gauss lift to be the closure of the Gauss lift of the interior). In each fibre this consists of almost complex structures for which $\hat{F}_*T\Delta\subset T\Sigma$ is preserved. But Reznikov's Lagrangian lift of $\Sigma$ consists fibrewise of complex structures for which $T_p\Sigma$ is sent to its orthogonal complement. This implies that $u$ is contained in a subset of the twistor space disjoint from $L_{\Sigma}$, however the boundary of $u$ is supposed to lie on $L_{\Sigma}$.
\end{proof}

\section{Topological preliminaries}\label{top-prelim}
We recall some facts from topology which we will use in the computation of Gromov-Witten invariants.
\subsection{Cohomological pushforward}
We recall that it is possible to pushforward a cohomology class $\alpha$ along continuous maps of oriented compact manifolds by converting $\alpha$ into its Poincar\'{e} dual homology class, pushing that forward and then taking the Poincar\'{e} dual. If $f:X\rightarrow Y$ is the map and $\pd$ denotes the Poincar\'{e} duality map from cohomology to homology (supressing the manifold on which it takes place) then this means
\[f_!\alpha:=\pd^{-1}f_*\pd\alpha\]
The main properties of cohomological pushforward we will need are:
\begin{itemize}
\item $f_!(\alpha\cup f^*\beta)=(f_!\alpha)\cup\beta$.
\item If we have a pair of fibre bundles $p:E\rightarrow B$ and $p':E'\rightarrow B'$ with fibres $F$ and $F'$ respectively and oriented vertical tangent bundles then a commutative diagram
\[
\begin{CD}
F @>{f}>> F'\\
@VVV @VVV\\
E @>{e}>> E'\\
@V{p}VV @VV{p'}V\\
B @>>{b}> B'
\end{CD}
\]
such that the map $f$ has degree 1 implies the equality
\[b^*p'_!=p_!e^*\]
In particular a pullback of oriented fibre bundles satisfies this condition.
\end{itemize}

\subsection{Diagonal decompositions}\label{diagdecomp}
Let $\Delta^k:X\rightarrow X^k$ denote the diagonal map $x\mapsto (x,\ldots,x)$ and let $\{x_i\}_{i\in I}$ be an additive basis for $H^*(X;\CC)$. We will find a formula for the cohomology class $\Delta^k_!(1)\in H^*(X^k;\CC)\cong H^*(X;\CC)^{\otimes k}$. This is what we call a \emph{decomposition of the diagonal}. We first introduce some notation. Let
\[g_{ij}=\int x_i\cup x_j\]
be the Poincar\'{e} pairing and $g^{ij}$ its inverse matrix (so $g_{ab}g^{bc}=\delta_a^{\ c}$). Denote by $C_{jk}^{\ \ i}$ the coefficients of the cup product
\[x_j\cup x_k=C_{jk}^{\ \ i}x_i\]
If we think of $(H^*(X;\CC),g)$ as an inner product space then we can raise and lower indices with $g$ and we see that
\[C_{jk\ell}=g_{i\ell}C_{jk}^{\ \ i}=\int x_j\cup x_k\cup x_{\ell}\]
Note that we must be careful with the order of indices since cup product is only graded-commutative. Finally, define the \emph{Poincar\'{e} amplitude}
\[P^{i_1\cdots i_k}=g^{i_1b_1}g^{c_1i_2}g^{a_{k-2}i_k}\prod_{m=2}^{k-2}g^{c_mi_{m+1}}g^{a_{m-1}b_m}\prod_{m=2}^{k-2}C_{a_mb_mc_m}\]
Mnemonically, we can think of this as the `Feynman amplitude' associated to the diagram
\[
\xy
(8,5)*{i_1};(8,-5)*{} **\dir{-};
(8,-5)*{};(32,-20)*{i_k} **\dir{-};
(8,-5)*{};(0,-20)*{i_2} **\dir{-};
(16,-10)*{};(10,-20)*{i_3} **\dir{-};
(24,-20)*{\cdots};
\endxy
\]
where the incoming edges to the $m$-th interior vertex (from the left) are labelled $b_m,c_m,a_m$ (clockwise from the topmost). Here, the Feynman rules associate a propagator $g^{pq}$ to an edge connecting downwards from $p$ to $q$ and a cubic interaction $C_{a_mb_mc_m}$ to the $m$-th interior vertex.
\begin{lma}\label{lma-diagdecomp}
\begin{align*}
\Delta^2_!(1)&=\sum_{ij}g^{ij}x_i\otimes x_j\\
\Delta^k_!(1)&=\sum_{i_1,\ldots,i_k}P^{i_1\cdots i_k}x_1\otimes\cdots\otimes x_k\mbox{ when }k\geq 3
\end{align*}
\end{lma}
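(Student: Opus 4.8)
The plan is to prove the two identities by computing $\Delta^k_!(1)$ directly from the defining properties of cohomological pushforward, using the two bullet points recalled in Section~\ref{top-prelim} together with the interpretation of $\Delta^k_!(1)$ as the Poincar\'{e} dual of the diagonal submanifold. First I would treat the case $k=2$, which is the base case and essentially the statement that the diagonal class decomposes via the Poincar\'{e} pairing. Here one writes $\Delta^2_!(1)=\sum_{a,b}\lambda^{ab}x_a\otimes x_b$ for unknown coefficients $\lambda^{ab}$ and pairs both sides against $x_i\otimes x_j$: using the projection formula and the fact that $(x_i\otimes x_j)$ restricted along $\Delta^2$ is $x_i\cup x_j$, one finds $\int_{X}x_i\cup x_j=\sum_{a,b}\lambda^{ab}g_{ai}g_{bj}$, i.e. $g_{ij}=\lambda^{ab}g_{ai}g_{bj}$, which inverts to $\lambda^{ij}=g^{ij}$. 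This is the standard K\"{u}nneth decomposition of the diagonal.

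The key structural observation for $k\geq 3$ is the iterated/fibre-product description of the diagonal. Concretely, $\Delta^k$ factors as $\Delta^{k-1}\times\id$ composed with $\Delta^2\times\id^{k-2}$ applied in the right slot, or more transparently: $\Delta^k(X)\subset X^k$ is obtained from $\Delta^2(X)\subset X^2$ by gluing a chain of copies of $X^2$ along diagonals. I would set this up recursively: writing $d_m:X\to X\times X$ for the diagonal and composing, one gets $\Delta^k_!(1)=(\text{successive }d_!)$ where each application of a diagonal pushforward on one factor introduces, by the $k=2$ case, a factor of $\sum g^{pq}x_p\otimes x_q$, and where the cup products that arise when two such factors meet on the same copy of $X$ produce the structure constants $C_{a b c}$ (after lowering indices with $g$). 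Carefully bookkeeping which $x$'s land on which tensor factor, and how the propagators $g^{pq}$ get contracted against the cubic vertices $C_{abc}$, reproduces exactly the ``Feynman amplitude'' $P^{i_1\cdots i_k}$ associated to the caterpillar tree drawn above: each interior trivalent vertex is a cup product, each edge is a copy of $g^{pq}$, and the $k$ external legs are the indices $i_1,\dots,i_k$. One should verify the claim by induction on $k$: assuming the formula for $k-1$, split off one external leg, apply a diagonal pushforward, and match the recursion for $P$ against the recursion implicit in the product formula for $P^{i_1\cdots i_k}$.

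I expect the main obstacle to be purely combinatorial/notational rather than conceptual: getting the index contractions in $P^{i_1\cdots i_k}$ to line up correctly with the recursive gluing, being careful that cup product is only graded-commutative so the cyclic order of the three legs $a_m,b_m,c_m$ at each interior vertex matters, and keeping track of Koszul signs when permuting tensor factors of $H^*(X;\CC)^{\otimes k}$ under the K\"{u}nneth isomorphism. (Since the statement is made with complex coefficients and the application will be to twistor spaces, one could also sidestep some sign headaches by observing that in the relevant cases the odd cohomology contributions can be handled uniformly, but in general the signs must be tracked.) A secondary point to check is that the decomposition is independent of the chosen basis $\{x_i\}$ and that raising/lowering indices with $g$ is consistent throughout; this is automatic once one recognizes $P^{i_1\cdots i_k}$ as a contraction of the intrinsic tensors $g$ and $C$, but it is worth stating explicitly.
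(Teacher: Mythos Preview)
Your proposal is correct and follows essentially the same route as the paper: the $k=2$ case is the standard K\"{u}nneth/Alexander--Whitney decomposition of the diagonal (the paper just cites this by name rather than pairing against $x_i\otimes x_j$), and for $k\geq 3$ the paper also proceeds by induction, factoring $\Delta^{k+1}$ as $\Delta^k$ followed by $\id^{k-1}\times\Delta^2$ and using the projection formula $\Delta^2_!(x)=\Delta^2_!(1)\cup(x\otimes 1)$ to produce the new vertex $C_{a_{k-1}b_{k-1}c_{k-1}}$ and propagators. The sign and Koszul issues you anticipate are real but the paper does not dwell on them, so you should not expect them to be the main obstacle.
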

\begin{proof}
The first equation is just the Alexander-Whitney formula. The second will follow by induction. Observe that $\Delta^{k+1}$ factors as
\[X\stackrel{\Delta^k}{\xrightarrow{\hspace*{1cm}}} X^k\stackrel{\id^{k-1}\otimes\Delta^2}{\xrightarrow{\hspace*{2cm}}} X^{k-1}\times X^2\]
Assuming inductively that the lemma holds for $\Delta^k$, we get
\[\Delta^{k+1}_!(1)=P^{i_1\cdots i_{k-1}b_{k-1}}x_{i_1}\otimes\cdots\otimes x_{i_{k-1}}\otimes\Delta^2_!(x_{b_{k-1}})\]
Since $\Delta^m$ is a diagonal embedding,
\[\Delta^m_!(x_{b_{k-1}})=\Delta^m_!(1)\cup (x_{b_{k-1}}\otimes 1^{\otimes m-1})\]
so (with propitious index naming)
\begin{align*}
\Delta^2_!x_{b_{k-1}}&=g^{a_{k-1}i_{k+1}}(x_{a_{k-1}}\cup x_{b_{k-1}})\otimes x_{i_{k+1}}\\
&=g^{a_{k-1}i_{k+1}}C_{a_{k-1}b_{k-1}c_{k-1}}g^{c_{k-2}i_k}x_{i_k}\otimes x_{i_{k+1}}
\end{align*}
which completes the induction step. Deriving the case $k=3$ from the Alexander-Whitney formula is elementary.
\end{proof}
In the sequel we will frequently use Poincar\'{e} amplitudes of different spaces and to distinguish them we will sometimes use decorations e.g. $P_X^{i_1\cdots i_k}$.

\begin{rmk}
We observe that if $\pi:A\rightarrow B$ is a fibre bundle satisfying the hypotheses of the ($\CC$) Leray-Hirsch theorem (i.e. there exist $\CC$-cohomology classes $\{z_i\}_{i=1}^N$ on $A$ which pull back to give a basis of the $\CC$-cohomology of the fibre) then the diagonal decomposition for $A$ has the form
\[\sum_{z_{i_1},\ldots,z_{i_k}}A^{i_1\ldots i_k}(z_{i_1}\otimes\cdots\otimes z_{i_k})\cup\pi^*\Delta_!^k(y_{i_1\ldots i_k})\]
where $y_{i_1\ldots y_k}$ is an element of $H^*(B;\CC)$ (we have abusively written $\pi:A^k\rightarrow B^k$). To see this, form the pullback
\[
\begin{CD}
A^{\otimes k} @>{\delta}>> A^k\\
@V{\tilde{\pi}}VV @VV{\pi}V\\
B @>>{\Delta^k}> B^k
\end{CD}
\]
The pushforward along $A\rightarrow A^k$ factors through this map. Note that $\tilde{\pi}:A^{\otimes k}\rightarrow B$ also satisfies the hypotheses of the Leray-Hirsch theorem since any cohomology class on the fibre $F^k$ is just a pullback of a product of classes from $A^k$. Therefore any class in $H^*(A^{\otimes k};\CC)$ can be written
\[(\delta^*c)\cup\tilde{\pi}^*y\]
pushing this class forward gives
\[\delta_!(\delta^*c\cup\tilde{\pi}^*y)=c\cup\delta_!\tilde{\pi}^*y=c\cup\pi^*\Delta_!^k(y)\]
which is of the required form.
\end{rmk}
\subsection{Borel-Hirzebruch theory}
Let $G$ be a compact connected Lie group and $H\subset G$ a closed subgroup containing a maximal torus $T$ of $G$. Borel-Hirzebruch theory is a means of calculating fibre integrals along bundle projections and takes as its starting point the diagram
\[
\xy
{\ar_{B\jmath} (-2,0)*{}; (-2,-10)*{}};
(-2,2)*{BT};(12,-12)*{BG};(-2,-12)*{BH};
{\ar^{B\kappa} (0,0)*{}; (10,-10)*{}};
{\ar_{B\iota} (2,-12)*{}; (8,-12)*{}};
\endxy
\]
where $T\stackrel{\jmath}{\rightarrow}H\stackrel{\iota}{\rightarrow}G$ are the inclusions and $\kappa=\iota\circ\jmath$. Fix $\Theta$ a positive system of roots of $G$ and $\Psi\subset\Theta$ a positive system of roots for $H$. We can think of these roots as element of the cohomology $H^*(BT;\CC)$. The Borel-Hirzebruch formula (\cite{BH}, Section 22.6) tells us that
\begin{equation}\label{borel-hirz1}\boxed{(B\iota)_!x=(B\kappa)_!\left((B\jmath)^*x\cup\td\right)}\end{equation}
where $\td$ is the Todd class
\[\prod_{\alpha\in\Psi}\frac{\alpha}{1-e^{-\alpha}}\in H^{**}(BT;\CC)\]
of the vertical tangent bundle to the $H/T$-bundle $BT\rightarrow BH$. Here $H^{**}$ denotes the direct product $\prod_{i\geq 0}H^i$ as opposed to the direct sum. Identifying $H^*(BG;\CC)$ as the Weyl-invariant subspace of $H^*(BT;\CC)$ via $(B\kappa)^*$, the pushforward $(B\kappa)_!$ can be computed using \cite{BH}, Equation (6):
\begin{equation}\label{borel-hirz2}\boxed{(B\kappa)_!y=\frac{\sum_{\sigma\in W(G)}\sigma(y)\sgn(\sigma)}{\prod_{\alpha\in\Theta}\alpha}}\end{equation}
where $\sgn(\sigma)$ denotes the determinant of $\sigma\in W(G)$ considered as a matrix acting on $\mathfrak{g}$.

We will apply Borel-Hirzebruch theory to the following situation. Let $T\subset H\subset G$ be as before and let $\pi:E\rightarrow B$ be a bundle of homogeneous spaces over a closed manifold arising as a pullback
\[
\begin{CD}
G/H @= G/H\\
@VVV @VVV\\
E @>{\cl_E}>> BH\\
@V{\pi}VV @VV{B\iota}V\\
B @>>{\cl_B}> BG
\end{CD}
\]
We can evaluate fibre integrals (pushforwards along $\tau$) of polynomials in the characteristic classes of the $H$-bundle over $Z$, for if $c$ is such a polynomial then
\[\tau_!\cl_E^*c=\cl_B^*(B\iota)_!c\]
This is useful for computing the ring structure on the cohomology of $Z$. Henceforth we assume that there exists an additive basis $C$ for the cohomology of $G/H$ coming from pulling back (along the inclusion $G/H\rightarrow Z$) the characteristic classes of the tautological $H$-bundle over $Z$. In this setting the Leray-Hirsch theorem implies that the cohomology $H^*(E;\CC)$ is a free $H^*(B;\CC)$-module generated by these characteristic classes. In particular if $C=\{z_i\}_{i\in I}$ and $\{y_j\}_{j\in J}$ is a basis for the cohomology of $B$ then any cohomology class $\beta$ can be written
\[\beta=\sum_{i\in I,j\in J}\beta^{ij}z_i\cup\tau^*y_j\]
and
\[\tau_!(\beta\cup z_{i_1}\cup\cdots\cup z_{i_p})=\sum_{i\in I,j\in J}\beta^{ij}\tau_!(z_i\cup z_{i_1}\cup\cdots\cup z_{i_p})\cup y_j\]
The fibre integrals on the right-hand side can be done using Borel-Hirzebruch theory and this gives a system of linear equations for the coefficients $\beta^{ij}$. This can be used to compute the cup product coefficients because if we take $\beta=(z_a\cup\tau^*y_b)\cup(z_c\cup\tau^*y_d)$ then
\[\beta^{ij}=C_{ab,cd}^{\ \ \ \ \ \ ij}.\]

\section{Topological computations}\label{top-comp}
We now perform some fibre integrals and computations of cohomology rings which will prove useful in the sequel.
\subsection{The twistor space}\label{top-twist}
The first bundle of interest is the twistor bundle
\[\tau:Z\rightarrow M\]
In this case $\cl_M$ classifies the ($SO(2n)$) frame bundle, $\cl_Z$ classifies the almost complex horizontal $U(n)$-bundle $\mH$ and the fibre is $SO(2n)/U(n)$. We may take as a system of positive roots of $SO(2n)$ and $U(n)$ respectively
\begin{align*}
\Psi&=\{x_i-x_j|i<j\}\\
\Theta&=\{x_i-x_j,x_i+x_j|i<j\}
\end{align*}
We now compute some fibre integrals and the cup product structure on $Z$ in the cases $n=3$ where the fibre is diffeomorphic to $\CC\PP^3$ and the cohomology of the fibre is therefore generated by powers of the first Chern class. When $n$ is larger the computations become unwieldy.

We have
\begin{align*}
(B\iota)_!c_1^3&=8&(B\iota)_!c_2c_1&=4\\
(B\iota)_!c_1^4&=0&(B\iota)_!c_2c_1^2&=0\\
(B\iota)_!c_1^5&=16p_1&(B\iota)_!c_2c_1^3&=4p_1\\
(B\iota)_!c_1^6&=64\chi&(B\iota)_!c_2c_1^4&=32\chi
\end{align*}
and, since $(B\jmath)^*c_3=x_1x_2x_3=(B\kappa)^*\chi$ is Weyl-invariant, we have for any $\ell\in H^*(BU(3);\CC)$
\[(B\iota)_!(c_3\cup\ell)=\chi\cup(B\iota)_!\ell\]
Now by Leray-Hirsch we can write any cohomology element of $Z$ as a linear combination of powers of $c_1(\mH)$ with coefficients in the cohomology of $H^*(M;\QQ)$. In particular
\[c_1(\mH)^4=\tau^*(\alpha)+c_1(\mH)\cdot\tau^*(\beta)+c_1^2\tau^*(\gamma)+c_1^3\tau^*(\delta)\]
Fibre integration yields
\begin{align*}
\delta&=0\\
\gamma&=2p_1=0\\
\beta&=8\chi
\end{align*}
which determines the ring structure completely as
\[H^*(Z;\CC)=H^*(M;\CC)[c_1(\mH)]/\left(c_1(\mH)^4=8c_1(\mH)\tau^*\chi\right)\]
By similar means we can express $c_2(\mH),\ c_3(\mH)$ in terms of $c_1(\mH)$:
\[c_2(\mH)=\frac{1}{2}c_1(\mH)^2,\ c_3(\mH)=\tau^*\chi\]
Let $z_i=c_1(\mH)^i$ and $y_j$ be an additive basis for $H^*(M;\CC)$ with $y_0=1$. In terms of the basis $z_i\cup\tau^*y_j$ for the cohomology of $Z$ we have Poincar\'{e} pairing
\[g^Z_{ia,jb}=8g^M_{ab}\delta_{i+j,3}+64\chi(M)\delta_{a,0}\delta_{b,0}\delta_{i+j,6}\]
with inverse
\[g^{ia,jb}_Z=\frac{1}{8}g^{ab}_M\delta^{i+j,3}-\chi(M)\delta^{y_a,\vol}\delta^{y_b,\vol}\delta^{i+j,0}\]
where $\vol$ is the (unit) volume form on $M$ (assumed to be a part of our basis). We also have
\[C^Z_{ia,jb,kc}=8\delta_{i+j+k,3}C^M_{abc}+64\delta_{i+j+k,6}\chi(M)\delta_{a,0}\delta_{b,0}\delta_{c,0}\]
From this and Lemma \ref{diagdecomp} we deduce
\begin{cor}[Formulae for the first few diagonal decompositions]\label{cor-diagdecomp-Z}
\begin{align*}
\Delta^2_!&=\frac{1}{8}\sum_{i=0}^3(c_1(\mH)^i\otimes c_1(\mH)^{3-i})\cup\tau^*\Delta_!^2(1_M)-\chi(M)\tau^*\Delta_!^2(\vol_M)\\
\Delta^3_!&=\frac{1}{64}\sum_{\substack{0\leq i,j,k\leq 3\\ i+j+k=6}}(c_1(\mH)^i\otimes c_1(\mH)^j\otimes c_1(\mH)^k)\cup\tau^*\Delta_!^3(1_M)\\
&\ \ \ \ \ \ \ \ \ +\frac{\chi(M)}{8}\left(c_1(\mH)^{\otimes 3}-\{c_1(\mH)^3\otimes 1\otimes 1\}\right)\cup\tau^*\Delta_!^3(\vol_M)
\end{align*}
where $\vol_M$ is a volume form with $\int_M\vol_M=1$, $1_M$ denotes the fundamental class of $M$, the curly brackets
\[\{z_{i_1}\otimes\cdots\otimes z_{i_k}\}\]
denote
\[\sum_{\sigma\in S_k/\mathrm{Stab}(z)}z_{i_{\sigma(1)}}\otimes\cdots\otimes z_{i_{\sigma(k)}}\]
and $\mathrm{Stab}(z)$ denotes the subgroup of permutations acting trivially on $(z_{i_1},\ldots,z_{i_k})$ (which is nontrivial when $z_{i_m}=z_{i_n}$ for some $m\neq n$). We have abused notation slightly and written $\Delta^k$ also for the diagonal inclusion of $M$ into $M^k$ and $\tau:Z^k\rightarrow M^k$ for the product of projections.
\end{cor}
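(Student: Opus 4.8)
The plan is to feed the structure constants $g^{ia,jb}_Z$ and $C^Z_{ia,jb,kc}$ just computed directly into the abstract formulae of Lemma~\ref{lma-diagdecomp}, working in the Leray--Hirsch basis $z_i\cup\tau^*y_j$ with $z_i=c_1(\mH)^i$ ($0\le i\le 3$) and $\{y_j\}$ the chosen basis of $H^*(M;\CC)$ containing $1_M$ and $\vol_M$. The structural point that makes the bookkeeping manageable is that both $g^{ia,jb}_Z$ and $C^Z_{ia,jb,kc}$ are sums of two terms with \emph{disjoint} support: a ``base'' term proportional to $\tfrac18 g^{ab}_M$ (resp.\ $8C^M_{abc}$), supported where the fibre exponents sum to $3$; and a ``$\chi$-correction'' term proportional to $\chi(M)$, supported on the complementary fibre locus (fibre exponents summing to $0$, resp.\ $6$) and nonzero only when the relevant $y$-indices are pinned to $\vol_M$ (resp.\ to $1_M$). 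Expanding any product of such structure constants therefore breaks into a sum over the choice of ``base'' or ``$\chi$'' part in each factor.

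For $\Delta^2_!$ this is immediate. The first part of Lemma~\ref{lma-diagdecomp} gives $\Delta^2_!(1)=\sum g^{ia,jb}_Z\,(z_i\cup\tau^*y_a)\otimes(z_j\cup\tau^*y_b)$, and the two terms of $g^{ia,jb}_Z$ produce precisely the two summands claimed, once one notes $\sum_{ab}g^{ab}_M\,y_a\otimes y_b=\Delta^2_!(1_M)$ and $\vol_M\otimes\vol_M=\Delta^2_!(\vol_M)$ (the latter because $y\cup\vol_M=0$ for $y$ of positive degree while $g^{1,\vol}_M=1$, and $z_0=1$).

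For $\Delta^3_!$ I would use the $k=3$ case of Lemma~\ref{lma-diagdecomp}, where the Poincar\'e amplitude $P_Z^{i_1i_2i_3}$ is a contraction of three copies of $g_Z$ against one copy of $C^Z$. Taking the ``base'' part in all four factors yields the constant $\tfrac{1}{8^{3}}\cdot 8=\tfrac1{64}$, the $g^M g^M C^M$ contraction reassembles into $\Delta^3_!(1_M)$, and the fibre-degree $\delta$'s force $i_1+i_2+i_3=6$; this is the first displayed line of the corollary. Any term with two or more ``$\chi$'' parts vanishes, since it would require a product of two $\vol_M$'s inside some $C^M$, i.e.\ $\int_M\vol_M^{\,2}\neq 0$. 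The surviving corrections thus have exactly one ``$\chi$'' part, sitting either in the $C$-factor or in one of the three $g$-factors; in each case the single $\chi$-correction forces $\vol_M$ on the legs it touches, which in turn forces $1_M$ on the adjacent $C^M$ legs, pinning \emph{all} three $y$-indices to $\vol_M$ and producing a multiple of $\tfrac{\chi(M)}{8}$ times a sum of tensor products of $c_1(\mH)$-powers cupped with $\tau^*\vol_M^{\otimes 3}=\tau^*\Delta^3_!(\vol_M)$.

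The main obstacle is the combinatorics of this last step: one must verify that the four partial sums (from the $\chi$-correction in the $C$-factor, or in the first, second, or third $g$-factor) --- which run over fibre-exponent triples summing to $3$, subject to various slots being forced to exponent $0$ --- add up to exactly $c_1(\mH)^{\otimes 3}-\{c_1(\mH)^3\otimes1\otimes1\}$. One checks that the triple $(1,1,1)$ appears with total coefficient $+1$, each of the three terms of $\{c_1(\mH)^3\otimes1\otimes1\}$ with coefficient $-1$, and the six ``$c_1(\mH)^2\otimes c_1(\mH)\otimes1$'' type terms cancel identically. This is elementary but must be done with care to land on the precise signs and normalisations in the statement.
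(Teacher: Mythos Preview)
Your proposal is correct and is exactly the computation the paper intends: the paper's ``proof'' is literally the single clause ``From this and Lemma~\ref{lma-diagdecomp} we deduce'' preceding the corollary, and you have filled in precisely those details, feeding the displayed formulae for $g_Z^{ia,jb}$ and $C^Z_{ia,jb,kc}$ into the $k=2,3$ cases of Lemma~\ref{lma-diagdecomp}. Your case analysis of the $\chi$-corrections and the resulting cancellation on the ten triples summing to $3$ is the right bookkeeping and lands on the stated answer.
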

\subsection{The moduli space of twistor lines}\label{top-prelim-twistorlines}
Recall that the holomorphic curves of minimal degree in the twistor fibre over $p$ are the \emph{twistor lines} consisting of complex structures preserving a fixed 4-plane in the tangent space $T_pM$ and equal to some fixed complex structure on its orthogonal complement. We write $\mL$ for the space of twistor lines and $\lambda:\mL\rightarrow M$ for the projection taking a line in the fibre $F_p$ to the point $p$. The fibre of $\lambda$ is the homogeneous space $\mL(F)=SO(2n)/SO(4)\times U(n-2)$ of lines in a single twistor fibre. We see that $\lambda:\mL\rightarrow M$ is the pullback of the tautological $SO(2n)/SO(4)\times U(n-2)$-bundle over $BSO(2n)$ along the classifying map for $TM$, so it fits into our Borel-Hirzebruch setup.

We will compute the cohomology ring in the case $n=3$. This will later be used to find the Poincar\'{e} amplitudes of a decomposition of the diagonal: these amplitudes will occur as coefficients in our formula for the Gromov-Witten class associated to the moduli space of twistor lines.

When $n=3$, the fibre of $\lambda$ is diffeomorphic to the Grassmannian
\[SO(6)/SO(4)\times SO(2)\]
and so its cohomology ring is
\[\CC[e,t]/(e^2=t^4,\ et=0)\]
where $e$ is the Euler class of the $SO(4)$-bundle and $t$ is the first Chern class of the $U(1)$-bundle. To see the relations, observe that: $e^2$ is Poincar\'{e} dual to the point in the Grassmannian representing the 4-plane orthogonal to a generic pair of vectors in $\RR^6$; $t^2$ is Poincar\'{e} dual to the point in the Grassmannian representing the 4-plane spanned by four generic vectors and $et$ is represented by the (generically empty) cycle of 4-planes which contain a given vector and are orthogonal to another. We need to calculate the fibre integrals
\[\lambda_!e^at^b\]
(where $e$ and $t$ now denote the corresponding characteristic classes on $\mL$). The only interesting ones are
\begin{align*}
\lambda_!t^4=\lambda_!e^2&=2\\
\lambda_!t^6&=2p_1(=0\mbox{ when }M\mbox{ is hyperbolic by Theorem \ref{chern-pont}})\\
\lambda_!et^5=\lambda_!e^3t&=2\chi
\end{align*}
These yield the cohomology ring
\begin{align*}H^*(\mL;\CC)&=H^*(M;\CC)[e,t]/(e^2=t^4,\ et=\lambda^*\chi)\end{align*}
when $M$ is hyperbolic.

Let $z_i$ run over the set $\{1,t,t^2,t^3,t^4,e\}$ and $y_j$ be a basis for $H^*(M;\CC)$. Then we see the Poincare pairing is
\[\int_{\mL}(z_i\cup\lambda^*y_j)\cup (z_k\cup\lambda^*y_{\ell})=2g^M_{j\ell}(\delta_{z_iz_k,t^4})\]
We also have
\[C_{ia,jb,kc}=2\delta_{i+j+k,4}C^M_{abc}+2\chi(M)\{\delta_{j+k,5}\delta_{i,e}\}\delta_{a0}\delta_{b0}\delta_{c0}\]
(where $\{\}$ denotes the sum over permutations as before) and so we deduce
\begin{cor}[Formulae for the first few diagonal decompositions]\label{cor-diagdecomp}
\begin{align*}
\Delta^2_!&=\frac{1}{2}\left(\{1\otimes t^4\}+\{t\otimes t^3\}+t^2\otimes t^2+e\otimes e\right)\cup\lambda^*\Delta_!^2(1_M)\\
\Delta^3_!&=\frac{1}{4}\left(\{t^4\otimes e\otimes e\}+\sum_{\substack{0\leq i,j,k\leq 4\\ i+j+k=8}}(t^i\otimes t^j\otimes t^k)\right)\cup\lambda^*\Delta^3_!(1_M)+\\
&\ \ \ +\frac{1}{4}\sum_{\substack{0\leq i<j\leq 3\\ i+j=3}}\{e\otimes t^i\otimes t^j\}\cup\lambda^*\Delta_!^3(\vol_M)
\end{align*}
where we have used the notation of Corollary \ref{cor-diagdecomp-Z}.
\end{cor}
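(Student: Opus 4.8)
The plan is to apply Lemma \ref{lma-diagdecomp} (cf.\ the Remark following it) directly to the Leray--Hirsch bundle $\lambda\colon\mL\to M$, taking as basis for $H^*(\mL;\CC)$ the classes $z_i\cup\lambda^*y_a$ with $z_i$ running over $\{1,t,t^2,t^3,t^4,e\}$ and $y_a$ over a basis of $H^*(M;\CC)$ with $y_0=1$. The whole computation is the exact analogue of the one producing Corollary \ref{cor-diagdecomp-Z}; its only inputs are the Poincar\'e pairing $g^{\mL}$ and the cubic structure constants $C^{\mL}$, both of which were recorded just above, computed from the fibre integrals $\lambda_!t^4=\lambda_!e^2=2$, $\lambda_!t^6=2p_1$ (which vanishes for hyperbolic $M$ by Theorem \ref{chern-pont}) and $\lambda_!et^5=\lambda_!e^3t=2\chi$.

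For $\Delta^2_!$ I would first invert the Poincar\'e pairing. Because $\lambda$ has positive-dimensional fibre, $\int_{\mL}$ annihilates the image of $\lambda^*$, so $\int_{\mL}(z_i\lambda^*y_a)(z_j\lambda^*y_b)=\int_{\mL}(z_iz_j)\lambda^*(y_ay_b)$ is nonzero exactly when the product $z_iz_j$ reduces to $t^4$ --- this includes $(z_i,z_j)=(e,e)$, since $e^2=t^4$ --- in which case it equals $2g^M_{ab}$. Thus $g^{\mL}_{ia,jb}=2g^M_{ab}\delta_{z_iz_j,t^4}$ is a genuine tensor product, whose fibre factor in the ordered basis $(1,t,t^2,t^3,t^4,e)$ is twice the involution $P$ exchanging $t^i\leftrightarrow t^{4-i}$ and fixing $e$; since $P$ squares to the identity, the inverse pairing is $g_{\mL}^{ia,jb}=\tfrac12 g_M^{ab}\delta_{z_iz_j,t^4}$. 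Substituting this into $\Delta^2_!(1)=\sum g_{\mL}^{ia,jb}(z_i\lambda^*y_a)\otimes(z_j\lambda^*y_b)$ and factoring out $\lambda^*\Delta^2_!(1_M)$ leaves exactly the six monomials $z_i\otimes z_j$ with $z_iz_j=t^4$, namely $\{1\otimes t^4\}+\{t\otimes t^3\}+t^2\otimes t^2+e\otimes e$, which is the first displayed formula.

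For $\Delta^3_!$ I would feed $g_{\mL}$ and $C^{\mL}$ into the degree-three Poincar\'e amplitude $P_{\mL}^{IJK}=g_{\mL}^{II'}g_{\mL}^{JJ'}g_{\mL}^{KK'}C^{\mL}_{I'J'K'}$ (composite indices $I=(ia)$, etc.). Contracting the ``$C^M$-part'' $2\delta_{i+j+k,4}C^M_{abc}$ of $C^{\mL}$ against the three copies of the inverse pairing reverses each $t$-exponent ($m\mapsto 4-m$, so the constraint $i+j+k=4$ turns into $i+j+k=8$) and, via $e^2=t^4$, lets an $e\otimes e$ pair stand in for a $t^4$; collecting these contributions and factoring out $\lambda^*\Delta^3_!(1_M)$ gives $\tfrac14\bigl(\{t^4\otimes e\otimes e\}+\sum_{i+j+k=8}t^i\otimes t^j\otimes t^k\bigr)\cup\lambda^*\Delta^3_!(1_M)$. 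The relation $et=\lambda^*\chi$ is exactly what prevents $C^{\mL}$ from being a pure tensor product: it contributes the additional term $2\chi(M)\{\delta_{j+k,5}\delta_{i,e}\}\delta_{a0}\delta_{b0}\delta_{c0}$, and contracting this against the inverse pairings forces each of the three $M$-indices onto the volume class, i.e.\ produces a factor $\lambda^*\chi=\chi(M)\lambda^*\vol_M$; since $\Delta^3_!(\vol_M)=\vol_M^{\otimes 3}$ on the closed $6$-manifold $M$, these terms reorganise into the displayed correction supported on the monomials $\{e\otimes t^i\otimes t^j\}$ with $i+j=3$. This correction is the precise analogue of the $\vol_M$-term in Corollary \ref{cor-diagdecomp-Z}, the role of $et=\lambda^*\chi$ being played there by $c_1(\mH)^4=8c_1(\mH)\tau^*\chi$.

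No step is genuinely difficult --- the proof is bookkeeping --- and the one subtlety, the place where the bundle structure of $\mL$ actually intervenes, is that $H^*(\mL;\CC)$ is \emph{not} the naive tensor-product ring $H^*(M;\CC)\otimes H^*(SO(6)/SO(4)\times SO(2);\CC)$, because of the wraparound relations $e^2=t^4$ and, more consequentially, $et=\lambda^*\chi$. One must therefore run the Leray--Hirsch decomposition of the diagonal with the honest structure constants $C^{\mL}$ rather than the fibrewise ones, and then keep track of which products of basis monomials survive the fibre integral $\lambda_!$; the $\lambda^*\Delta^k_!(\vol_M)$ summands are precisely the footprint of this non-triviality.
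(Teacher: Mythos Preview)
Your approach is exactly the paper's: the corollary is stated there without proof, merely as the output of feeding the Poincar\'e pairing $g^{\mL}$ and the cubic constants $C^{\mL}$ (recorded immediately before it) into Lemma \ref{lma-diagdecomp}, just as Corollary \ref{cor-diagdecomp-Z} was. Your write-up supplies the bookkeeping the paper omits, and the inversion of $g^{\mL}$ and the dualisation $t^m\leftrightarrow t^{4-m}$, $e\leftrightarrow e$ are carried out correctly.

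One coefficient deserves a closer look. In your last paragraph you correctly note that contracting the $2\chi(M)\{\delta_{j+k,5}\delta_{i,e}\}\delta_{a0}\delta_{b0}\delta_{c0}$ piece of $C^{\mL}$ against three copies of $g_{\mL}^{-1}$ forces each $M$-index onto $\vol_M$ and produces a factor of $\chi(M)$; combined with the prefactor $\tfrac18\cdot 2$ this yields $\tfrac{\chi(M)}{4}$, not the bare $\tfrac14$ printed in the statement. (Compare the analogous $\tfrac{\chi(M)}{8}$ in Corollary \ref{cor-diagdecomp-Z}.) You should not silently ``reorganise'' this into the displayed $\tfrac14$: the displayed coefficient appears to be a typo, and indeed the downstream computation of $\GW^Z_{A,3}$ in Corollary \ref{gw-input} is only symmetric in its three arguments, and only reproduces the stated $\tfrac{\chi(M)}{2}\{c_1(\mH)\otimes 1\otimes 1\}$ term, if one uses $\tfrac{\chi(M)}{4}$ here. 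So your method is right; just record the coefficient your own argument actually gives.
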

\subsection{The evaluation map}\label{ev-pushfwd}
We denote by $\mL_1$ the moduli space of twistor lines with one marked point on the domain. Note that this has a forgetful map $\ft_1:\mL_1\rightarrow\mL$ which exhibits it as an $SO(4)/U(2)$-bundle and an evaluation map $\ev:\mL_1\rightarrow Z$ which sends a twistor line and a point on the twistor line to the corresponding point in $Z$. From the first description we see that $\mL_1$ is the pullback of the tautological $SO(2n)/U(2)\times U(n-2)$ over $BSO(2n)$ along the classifying map for $TM$. The evaluation map is induced by the inclusion $U(2)\times U(n-2)\rightarrow U(n)$.

We will need to compute cohomological pushforwards along $\ev$ of certain classes $\ft_1^*c$, $c\in H^*(\mL;\CC)$. Again we will only compute the case $n=3$. In this case the fibre is
\[\CC\PP^2\cong U(3)/U(2)\times U(1)\]
We will denote by $A$ and $B$ the tautological $U(2)$ and $U(1)$-bundles respectively. The integrals we need to compute are a subset of
\[\ev_!c_2(A)^ac_1(B)^b\]
since the cohomology classes $t$ and $e$ from the previous section pullback to $c_1(B)$ and $c_2(A)$ respectively. The integrals we are interested in are
\begin{align}\label{coh-ev}
\ev_!c_1(B)^k&=\begin{cases}
1&\ \text{when}\ k=2\\
c_1(\mH)&\ \text{when}\ k=3\\
c_1^2(\mH)-c_2(\mH)=c_2(\mH)=\frac{c_1(\mH)^2}{2}&\ \text{when}\ k=4\\
\end{cases}\\
\ev_!c_2(A)c_1(B)^k&=\begin{cases}
1&\ \text{when}\ k=0\\
0&\ \text{when}\ k=1\\
0&\ \text{when}\ k=2\\
c_3(\mH)=\tau^*\chi&\ \text{when}\ k=3\\
c_1(\mH)\cup c_3(\mH)=c_1(\mH)\cup\tau^*\chi&\ \text{when}\ k=4\\
\end{cases}\\
\ev_!c_2(A)^2&=c_2(\mH)=\frac{c_1(\mH)^2}{2}
\end{align}
where $\mH$ is the tautological $U(3)$-bundle over $Z$.

\subsection{The topology of hyperbolic manifolds}\label{vis}
The assumption that the Stiefel-Whitney classes of $M$ vanish allows us to give a good answer to the question of when $M$ admits an almost complex structure. Notice that the inclusion of $SO(2n)$ into $GL^+(2n)$ is a homotopy equivalence and hence the question of whether $M$ admits an orthogonal almost complex structure (a section of the twistor bundle) is the same as whether it admits any almost complex structure.
\begin{lma}
An 6-manifold with vanishing Stiefel-Whitney classes admits: a) an almost complex structure, b) a field $\xi$ of tangent 2-planes.
\end{lma}
\begin{proof}
To prove a), recall (\cite{OkVan}, Proposition 8) that the obstruction to the existence of a lift $M\rightarrow BU(3)$ of the classifying map for the oriented tangent bundle is the Bockstein image of $w_2(M)$ in $H^3(M;\ZZ)$. To prove b), (\cite{Th}, Theorem 1.3) gives a condition for the existence of such a 2-plane field with Euler class $u\in H^2(M;\ZZ)$: that there exists a class $v\in H^4(M;\ZZ)$ which reduces to the 4th Stiefel-Whitney class such that $u\cup v[M]=\chi(M)$. Vanishing of the Stiefel-Whitney classes reduces this to the condition that $v$ is divisible by 2. Since the Euler characteristic of a 6-manifold is even the existence of such a class $v$ follows straight from nondegeneracy of the Poincar\'{e} pairing.
\end{proof}
The almost complex structure gives us a section of the twistor bundle and hence a submanifold representing the homology class Poincar\'{e}-dual to $c_1(\mH)^3/8$. The 2-plane field allows us to ``Gauss-lift'' the whole 6-manifold by defining a submanifold of the twistor bundle consisting of points corresponding to complex structures for which $\xi$ is preserved. This submanifold intersects each twistor fibre in a twistor line and is Poincar\'{e} dual to $c_1(\mH)^2/4$. We can also define a submanifold Poincar\'{e} dual to $c_1(\mH)$ by taking the fibrewise cut-locus of our section (which is a hyperplane in each fibre). Let us write $\Sigma_k$ ($k=0,1,2,3$) for the corresponding submanifold representing $\pd\left(\frac{c_1(\mH)^k}{2^k}\right)$ (where $\Sigma_0=Z$). Notice that if $Y$ is a submanifold of $M$ then $\tau^{-1}(Y)$ intersects $\Sigma_k$ transversely for all $k$.

These observations allow us to visualise homology classes in the twistor space. Given a cohomology class $y$ in $M$ we can represent $K\pd(y)$ by a submanifold $Y$ for large $K$. Now we can represent $K\pd(c_1(\mH)^i\tau^*y)$ by the intersection of the preimage $\tau^{-1}(Y)$ with the submanifolds (sections, Gauss-lifts,...) representing $c_1(\mH)^i$ (assuming they exist).
\section{Definition of Gromov-Witten invariants}\label{dfngw}
We recall the definition of genus 0 Gromov-Witten invariants, quantum cohomology and the Gromov-Witten potential. For more details see \cite{MS04}. Let $Z$ be a $2N$-dimensional monotone symplectic manifold (for example the twistor space of a hyperbolic $2n$-manifold with $n\geq 3$, for which $N=\frac{n(n+1)}{2}$).
\begin{dfn}
Let $J$ be a regular $\omega$-tame almost complex structure on $Z$, $\beta\in H_2(Z;\ZZ)$ a homology class and define
\begin{align*}
\mM_{0,k}(Z,\beta,J)&:=\{(u,\underline{z})|u:S^2\rightarrow X,\ \dbar_Ju=0,\ u_*[S^2]=\beta,\ \underline{z}=(z_1,\ldots,z_k)\in S^k,\\
&\ \ \ \ \ \ \ \ \ \ z_1=0,\ z_2=1,\ z_3=\infty,\ z_i\neq z_j\text{ for }i\neq j\}
\end{align*}
This is a smooth manifold of dimension
\[2N+2c_1(Z)[\beta]+2(k-3)\]
Consider the evaluation map
\[\ev_k:\mM_{0,k}(Z,\beta,J)\rightarrow Z^k,\ \ev_k(u)=(u(z_1),\ldots,u(z_k))\]
This is a pseudocycle which can be compactified by adding strata of stable maps to $\mM_{0,k}(Z,\beta,J)$. The compactified moduli space is denoted
\[\overline{\mM}_{0,k}(Z,\beta,J)\]
We define the \emph{$k$-point genus 0 Gromov-Witten invariant of $Z$ in the class $A$} to be the homology class
\[\GW^Z_{\beta,k}=(\ev_k)_*[\mM_{0,k}(Z,\beta,J)]\in H_{2N+2c_1(Z)+2(k-3)}(Z^k;\CC)\]
in the sense of pseudocycles. One can extract numerical invariants by intersecting with pseudocycles in $Z^k$. If $a_1,\ldots,a_k$ are pseudocycles representing $\CC$-homology classes in $Z^k$ then we write
\[\GW^Z_{\beta,k}(a_1,\ldots,a_k)=\GW^Z_{\beta,k}\cdot(a_1\times\cdots\otimes a_k)\]
where $\cdot$ is the pseudocycle intersection pairing.
\end{dfn}
In the case of twistor spaces all holomorphic curves live in a multiple $mA$ of the homology class of a twistor line and when $n\geq 3$ the twistor space is monotone so we will use the Novikov coefficient ring $\Lambda=\CC[q]$ (the exponent of $q$ corresponds to the multiplicity $m$ and monotonicity implies we only need polynomials rather than formal power series).

Pick a $\ZZ$-basis $x_0,\ldots,x_N$ for $H^*(Z;\ZZ)$ with $x_0=1\in H^0(Z;\ZZ)$ such that every basis element has pure degree.
\begin{dfn}
The quantum cohomology of $(Z,\omega)$ is a ring structure on the graded vector space
\[QH^*(Z;\Lambda)=H^*(Z;\CC)\otimes_{\CC}\Lambda\]
given (on elements of pure degree) by
\[a\star b=\sum_{\beta\in H_2(Z)}\sum_{\nu,\mu}\GW^Z_{\beta,3}(a,b,x_i)g^{ij}x_j\otimes e^{\beta}\]
where $g^{ij}$ is the inverse matrix to
\[g_{ij}=\int_Zx_i\cup x_j\]
and the grading on $QH^*(Z;\Lambda)$ is
\[QH^k(Z;\Lambda)=\bigoplus_i H^i(Z;\CC)\otimes_{\CC}\Lambda^{k-i}\]
\end{dfn}
\section{The linearised theory}\label{linear}
Recall that vertical $J_-$-holomorphic curves are $j_F$-antiholomorphic curves in the twistor fibre. The image of an antiholomorphic curve is equal to the image of a holomorphic curve since we can always precompose with an antiholomorphic involution. We will now prove the following.
\begin{prp}\label{lintheory}
For vertical $J_-$-holomorphic curves,
\begin{enumerate}[(i)]
\item the kernel of the linearised $\dbar_{J_-}$-operator is precisely the tangent space of the moduli space.
\item If moreover the image of a $J_-$-holomorphic curve is contained in a line
\[SO(4)\times U(n-2)/U(2)\times U(n-2)\]
in the twistor fibre corresponding to a 4-plane $\pi\subset T_pM$ then there is a subspace of the cokernel of the linearised $\dbar_{J_-}$-operator naturally isomorphic to $\pi$.
\item In particular, the \emph{obstruction bundle} of cokernels over the moduli space
\[\mL=\Gamma\backslash SO^+(2n,1)/SO(4)\times U(n-2)\]
of (anti)twistor lines is naturally isomorphic to the tautological $SO(4)$-bundle.
\end{enumerate}
\end{prp}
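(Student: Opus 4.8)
The plan is to compute the linearised $\dbar_{J_-}$-operator at a vertical $J_-$-holomorphic curve by decomposing the pullback bundle $u^*TZ$ using the splitting $TZ = \mV \oplus \mH$ and to exploit the fact that, for a vertical curve, $u$ maps into a single fibre $F_p \cong SO(2n)/U(n)$. First I would recall that the linearisation $D_u$ of $\dbar_{J_-}$ splits into a ``vertical'' part and a ``horizontal'' part, using that the connection $\nabla$ on $Z$ restricts to the fibres (the fibres are totally geodesic by Lemma \ref{formulae}, which also shows $\nabla_X$ preserves the splitting for vertical $X$). Along the vertical directions the operator is precisely the linearised $\dbar_{j_F}$-operator on the Kähler manifold $(F_p, j_F, g_F)$; since antiholomorphic curves in $F_p$ are the same as holomorphic curves after precomposition with an antiholomorphic involution, and $F_p$ is a homogeneous Kähler manifold on which line bundles of the relevant degree are positive, this vertical operator is surjective and its kernel is the tangent space to the space of vertical curves inside the fibre. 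Combined with the fact that $\mL$ moves the fibre point $p$ through $M$ and the base $M$ contributes no obstructions (it is a plain parameter, with the horizontal-lift vector fields genuinely deforming the map), this gives (i): the kernel of $D_u$ is the full tangent space $T\mL$, i.e.\ the moduli space is cut out transversally in the vertical directions and the horizontal deformations are unobstructed on the kernel side.

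For (ii) the key computation is the horizontal part of $D_u$. Write $u$ for an antitwistor line whose image lies in the sub-$SO(4)$-fibre over the $4$-plane $\pi \subset T_pM$, i.e.\ $u$ consists of complex structures $\psi$ that preserve $\pi$ and agree with a fixed complex structure on $\pi^\perp$. A horizontal infinitesimal deformation is (to leading order) the horizontal lift $\widetilde W$ of a vector field $W$ along the constant map to $p$, i.e.\ a vector $W \in T_pM$. Using the formula $\nabla_X \widetilde W = (\nabla_{\widetilde W} X)^H$ from Lemma \ref{formulae} and equation (\ref{psieqn}) for the antiholomorphic curve $\psi$, I would compute $D_u \widetilde W$ and show that it is the horizontal lift of a section of a bundle over the domain $S^2$ built from $W$ and $\psi$; concretely the obstruction to solving $D_u \widetilde W = 0$ sees only the component of $W$ inside $\pi$, because on $\pi^\perp$ the complex structure $\psi$ is constant and the corresponding piece of the equation is the standard (surjective) $\dbar$ on a trivial-ish summand, whereas on $\pi$ the ``twisting'' of $\psi$ produces a holomorphic-type equation whose cokernel is a single copy of $\pi$ itself (the constant sections). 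So the cokernel of $D_u$, restricted to horizontal deformations, is naturally identified with $\pi \subset T_pM$, and the vertical part of the cokernel vanishes by the argument for (i). This is exactly the asserted isomorphism.

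Part (iii) is then essentially bookkeeping: as $(p,\pi)$ varies over $\mL = \Gamma\backslash SO^+(2n,1)/(SO(4)\times U(n-2))$, the $4$-plane $\pi$ is precisely the fibre of the tautological $SO(4)$-bundle associated to that homogeneous-space description (the $SO(4)$ factor in the isotropy group acts on $\pi$), and the naturality of the identification in (ii)---it is built canonically out of $\nabla$, $g$, $j_F$ and the curve $\psi$, all of which are equivariant---shows the bundle of cokernels is isomorphic as a vector bundle (indeed as an $SO(4)$-bundle) to this tautological $SO(4)$-bundle. I would close by noting that this is the precise input needed later: the Euler class of this obstruction bundle is the class $e$ appearing in the cohomology ring of $\mL$ computed in Section \ref{top-prelim-twistorlines}.

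\textbf{Main obstacle.} The delicate point is the horizontal computation in (ii): one must be careful that the naive ``horizontal lift of a constant vector field'' is genuinely the right model for horizontal deformations (there could be mixing between vertical and horizontal deformations at the level of the full linearised operator, since $J_-$ is non-integrable), and one must correctly identify which summand of $u^*TZ$ carries a nonzero cokernel. Getting the factor of the tautological $SO(4)$ exactly right---as opposed to, say, its dual or a twist of it---requires pinning down signs and the precise action of $\psi$, and this is where the hyperbolicity hypothesis (which makes $(g_Z, J_-, \omega_{\rez})$ almost Kähler, so that the relevant Weitzenböck/Bochner-type vanishing holds) is really used.
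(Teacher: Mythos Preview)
Your overall strategy matches the paper's: split $D_u$ via $TZ=\mV\oplus\mH$, use convexity of the homogeneous fibre for $D_u^{VV}$, and reduce the cokernel computation to the horizontal operator $D_u^{HH}$. Two points deserve sharpening.

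First, for (i) you assert that horizontal lifts of constant vectors exhaust $\ker D_u^{HH}$ but give no argument. The paper does this by a direct computation: in a local orthonormal frame one finds (Equation~\eqref{CR}) that $D_u^{HH}\xi$ vanishes iff $\partial_s\xi^j+\psi^j_{\ k}\partial_t\xi^k=0$, and differentiating this relation together with the antiholomorphicity equation~\eqref{psieqn} for $\psi$ yields $\nabla^2\xi^j=0$, so each component is harmonic on $S^2$ and hence constant. This is the missing step in your sketch, and it is where the specific form of $D_u^{HH}$ (which in turn uses Lemma~\ref{formulae} and the almost-K\"{a}hler condition) really enters. Note also that $D_u$ is only block \emph{lower}-triangular (there is a nonzero $D_u^{VH}$ term), not block diagonal; surjectivity of $D_u^{VV}$ is what lets you identify $\coker D_u\cong\coker D_u^{HH}$ despite this.

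Second, your route to (ii) via the splitting $u^*\mH=\pi\oplus\pi^\perp$ is a legitimate alternative to the paper's, but your description conflates kernel and cokernel: the cokernel does not consist of ``constant sections'' (those are in the kernel), it lives in $\Omega^{0,1}(u^*\mH)$. The paper instead computes the formal adjoint $(D_u^{HH})^*$ explicitly and exhibits, for each $v\in T_pM$, an element $\eta_v\in\ker(D_u^{HH})^*$ given by $\eta_v(\partial_s)^i=(\partial_t\psi^i_{\ j})v^j$; since $\psi$ is constant on $\pi^\perp$ this vanishes for $v\in\pi^\perp$, while for $v\in\pi$ it is nonzero because $u$ is somewhere immersed. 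Your splitting idea would recover the same conclusion if you supplemented it with either this adjoint computation on the $\pi$-summand or an index count (the real index of $D_u^{HH}|_\pi$ is zero, and you already know the kernel is the $4$-dimensional space of constants). Finally, no Weitzenb\"{o}ck/Bochner argument is needed here; the Laplacian that appears is the flat Laplacian on the component functions $\xi^j$, and hyperbolicity enters only through the background compatibility of $(g_Z,J_-,\omega_{\rez})$.
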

One might be concerned that using antiholomorphic curves in the twistor fibre will affect the orientation of the moduli space. Indeed the orientation of the moduli space of twistor lines is reversed along the fibre directions, but so is the orientation of the twistor fibre and therefore the computation of Gromov-Witten invariants will not be affected if we ignore this simultaneous change of signs.

Let $u:\CC\PP^1\rightarrow Z$ be a genus 0 vertical $J_-$-holomorphic curve.
\begin{itemize}
\item Let $u^*\nabla$ denote the pullback of the twistor connection to $u^*TZ$.
\item Recall that the linearised Cauchy-Riemann operator $D_u:\Omega^0(u^*TZ)\rightarrow\Omega^{0,1}(u^*TZ)$ is given by the formula
\[(D_u\xi)(X)=(u^*\nabla)_X\xi+J_-(u^*\nabla)_{j X}\xi-(J_-\nabla_{\xi}J_-)du(X)\]
\end{itemize}
We can take vertical and horizontal parts of $D_u$ using Lemma \ref{formulae} and we get
\[D_u=\left(
\begin{array}{cc}
D_u^{HH} & 0\\
D_u^{VH} & D_u^{VV}
\end{array}\right):\Omega^0(u^*\mH\oplus u^*\mV)\rightarrow\Omega^{0,1}(u^*\mH\oplus u^*\mV)
\]
where $D_u^{VH}\xi=(D_u\xi^H)^V$ etc. The $D_u^{VV}$-part is just the linearised Cauchy-Riemann operator governing deformations of $u$ as a holomorphic curve in $F$ (the twistor fibre). But $F$ is a homogeneous space and is therefore convex in the sense of Kontsevich \cite{Kont}, i.e. all genus 0 holomorphic curves in $F$ are regular. Therefore $D_u^{VV}$ is surjective.

The kernel of $D_u$ fits into an exact sequence
\[0\rightarrow\ker D_u^{VV}\stackrel{a}{\rightarrow}\ker D_u\stackrel{b}{\rightarrow}\ker D_u^{HH}\rightarrow 0\]
where $a$ is inclusion into the vertical component and $b$ is projection to the horizontal component: surjectivity of $b$ follows from surjectivity of $D_u^{VV}$. The cokernel of $D_u$ is naturally identified with the cokernel of $D_u^{HH}$, therefore to compute obstruction bundles it suffices to understand $D_u^{HH}$.

Using the projection $\tau_*$ and horizontal lifting $\tilde{\cdot}$ we will identify $\mH$ and $\tau^*TM$. Let $e_1,\ldots,e_{2n}$ be a local orthonormal frame on $M$ near a point $p$ and $\{\tilde{e}_i\}_{i=1}^{2n}$ the horizontal lift of this frame to a neighbourhood of $\tau^{-1}(p)$ in $Z$. Suppose $\xi=\sum\xi^i\tilde{e}_i\in\mH$. Then
\begin{align*}
(D_u^{HH}\xi)(X)&=[D_u\xi(X)]^H\\
&=X(\xi^i)\tilde{e}_i+\xi^i\nabla_X\tilde{e}_i+(jX)(\xi^i)\psi\tilde{e}_i+\xi^i\psi\nabla_{jX}\tilde{e}_i\\
&\ \ \ \ \ -\xi^i\psi\left(\nabla_{\tilde{e}_i}du(jX)-J_-\nabla_{\tilde{e}_i}X\right)^H
\end{align*}
where $\psi$ is the complex structure on $\mH$ at the point $u(z)$ of the twistor fibre. By Lemma \ref{formulae} this equation is just
\begin{equation}\label{CR}\boxed{D_u^{HH}\xi(X)=X(\xi^i)\tilde{e}_i+(jX)(\xi^i)\psi\tilde{e}_i}\end{equation}
Pick conformal coordinates $(s,t)$ in a patch on $\CC\PP^1$ and write $X=X^s\partial_s+X^t\partial_t$. Then Equation (\ref{CR}) becomes
\[D_u^{HH}\xi(X)=(X^s\delta^i_{\ j}-X^t\psi^i_{\ j})(\partial_s\xi^j+\psi^j_{\ k}\partial_t\xi^k)\]
\begin{proof}[Proof of Proposition \ref{lintheory} (i)]
To find the kernel of $D_u^{HH}$ it therefore suffices to solve
\[\Xi:=\partial_s\xi^j+\psi^j_{\ k}\partial_t\xi^k=0\]
Differentiating this with respect to $s$ gives
\[\partial_s\Xi^j=\partial_s^2\xi^j+(\partial_s\psi^j_{\ k})(\partial_t\xi^k)+\psi^j_{\ k}\partial_s\partial_t\xi^k=0\]
and with respect to $t$ gives
\[\partial_t\Xi^{\ell}=\partial_t\partial_s\xi^{\ell}+(\partial_t\psi^{\ell}_{\ k})(\partial_t\xi^k)+\psi^{\ell}_{\ k}\partial_t^2\xi^k=0\]
so
\[\partial_s\Xi^j-\psi^j_{\ \ell}\partial_t\Xi^{\ell}=\nabla^2\xi^j+(\partial_s\psi^j_{\ k}-\psi^j_{\ \ell}\partial_t\psi^{\ell}_{\ k})\partial_t\xi^k\]
since $\nabla^2=\partial_s^2+\partial_t^2$ in local coordinates. Note that by Equation (\ref{psieqn})
\[\partial_s\psi^j_{\ k}-\psi^j_{\ \ell}\partial_t\psi^{\ell}_{\ k}=0\]
so $\xi^k\in\ker D_u^{HH}$ implies that the component functions $\xi^j$ are harmonic functions on $S^2$ and hence constant.

Therefore the kernel of $D_u$ consists of precisely the tangent directions in the moduli space $\mM(mA,J_-)$ (where $u$ has degree $m$) since these are precisely the deformations of $u$ as a vertical curve (kernel of $D_u^{VV}$) and the deformations of $u$ to nearby fibres (kernel of $D_u^{HH}$).\end{proof}
\begin{proof}[Proof of Proposition \ref{lintheory} (ii)]
\begin{lma}
The adjoint operator $(D_u^{HH})^*$ is given (in conformal coordinates on the domain and coordinates on $u^*\mH$ induced from an orthonormal frame $e_i$ of $T_pM$ as before) by
\[((D_u^{HH})^*\eta)^i=-2\Theta^{-2}(\partial_s\eta^i-\partial_t(\psi^i_{\ j}\eta^j))\]
where $\dvol=\Theta^2 ds\wedge dt$ is the standard area form on $S^2$ expressed in conformal coordinates and $\eta^i=\eta(\partial_s)^i$ (since $\eta\in\Omega^{0,1}(u^*\mH)$ its value on any vector determines its value on any other).
\end{lma}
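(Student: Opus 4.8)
The plan is to compute the formal $L^2$-adjoint directly from the conformal-coordinate expression for $D_u^{HH}$ in Equation~(\ref{CR}) by integrating by parts; the only care required is in pinning down the two $L^2$ inner products so that the conformal factor $\Theta$ does not leak into the computation.

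First I would fix those inner products. Everything is real-valued: the $\tilde{e}_i$ form a real orthonormal frame of the rank-$2n$ real bundle $u^*\mH$, so no complex conjugations will appear. On $\Omega^0(u^*\mH)$ I use the bundle metric induced by $g$ together with $\dvol=\Theta^2\,ds\wedge dt$, so $\langle\xi,\xi'\rangle_{L^2}=\int\big(\sum_i\xi^i(\xi')^i\big)\Theta^2\,ds\,dt$. On $\Omega^{0,1}(u^*\mH)$ I write a form as $\eta=\eta(\partial_s)\,ds+\eta(\partial_t)\,dt$ with $\eta(\partial_t)=-\psi\,\eta(\partial_s)$ forced by the $(0,1)$-condition; since $ds$ and $dt$ are orthogonal of norm $\Theta^{-1}$ and $\psi$ is $g$-orthogonal, this gives $\langle\eta,\eta'\rangle_{L^2}=\int 2\big(\sum_i\eta^i(\eta')^i\big)\,ds\,dt$, with the $\Theta$-factors cancelling. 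This cancellation is the crucial normalisation: it is exactly what allows the integration by parts below to be carried out on the flat $(s,t)$-plane with no leftover terms involving derivatives of $\Theta$.

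Next I would expand the pairing. By Equation~(\ref{CR}), the $\partial_s$-component of $D_u^{HH}\xi$ is $\Xi^i:=\partial_s\xi^i+\psi^i_{\ j}\partial_t\xi^j$ (and its $\partial_t$-component is $-\psi^i_{\ j}\Xi^j$, confirming it has type $(0,1)$), so $\langle D_u^{HH}\xi,\eta\rangle_{L^2}=\int 2\sum_i\big(\partial_s\xi^i+\psi^i_{\ j}\partial_t\xi^j\big)\eta^i\,ds\,dt$. Integrating by parts in $s$ and in $t$ — on coordinate patches pieced together by a partition of unity, with no boundary terms since $S^2$ is closed — and using that the matrix $\psi^i_{\ j}$ is antisymmetric (because $\psi^2=-1$ and $\psi$ is $g$-orthogonal, so $\psi^T=-\psi$, in the orthonormal frame $\tilde{e}_i$) to slide $\psi$ past the $t$-derivative, the integrand becomes $2\sum_i\xi^i\big(-\partial_s\eta^i+\partial_t(\psi^i_{\ j}\eta^j)\big)$. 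Rewriting $2\,ds\,dt=2\Theta^{-2}\,\dvol$ and matching against $\langle\xi,(D_u^{HH})^*\eta\rangle_{L^2}$ then reads off $((D_u^{HH})^*\eta)^i=-2\Theta^{-2}\big(\partial_s\eta^i-\partial_t(\psi^i_{\ j}\eta^j)\big)$, as asserted.

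The only real obstacle is bookkeeping rather than substance: getting the factor $2$ and the power $\Theta^{-2}$ to come out correctly requires committing to the metric on $(0,1)$-forms above, and one must check that the $t$-derivative landing on both the $\eta$-term and the domain-varying complex structure $\psi=u(z)$ reassembles into exactly $\partial_t(\psi^i_{\ j}\eta^j)$ rather than leaving a separate $(\partial_t\psi)$-term behind. Both points are dispatched by the antisymmetry of $\psi$ together with the $\Theta$-cancellation built into the two inner products; there is no genuine analytic difficulty, since the computation reduces entirely to integration by parts for smooth data on the sphere.
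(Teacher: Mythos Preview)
Your proof is correct; the paper in fact states this lemma without proof, so there is nothing to compare against. Your integration-by-parts computation, together with the normalisation of the $L^2$-pairing on $(0,1)$-forms that makes the conformal factor drop out, is exactly the standard derivation one would expect here.
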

We can now write down some solutions of $(D_u^{HH})^*\eta=0$ immediately.
\begin{lma}
Let $v\in T_pM$ be a vector and pick conformal coordinates $(s,t)$ on a patch in $S^2$. Define $\eta_v=\Omega^{0,1}(u^*\mH)$ by requiring $\eta_v(\partial_s)^i=(\partial_t\psi^i_{\ j})v^j$. Then $(D_u^{HH})^*\eta_v=0$.
\end{lma}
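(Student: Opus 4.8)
The plan is to substitute the candidate $(0,1)$-form $\eta_v$ directly into the adjoint formula of the preceding lemma and to verify that the resulting expression vanishes; the whole content of the computation will turn out to be the $t$-derivative of the antiholomorphicity equation (\ref{psieqn}).

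First I would invoke the preceding lemma to reduce the claim to a local identity: since
\[((D_u^{HH})^*\eta_v)^i=-2\Theta^{-2}\left(\partial_s\eta_v^i-\partial_t(\psi^i_{\ j}\eta_v^j)\right),\]
it suffices to show $\partial_s\eta_v^i-\partial_t(\psi^i_{\ j}\eta_v^j)=0$ in the chosen conformal chart, and hence everywhere, since (\ref{psieqn}) holds at every point of the antiholomorphic curve $\psi=u$. Here $\eta_v^i:=\eta_v(\partial_s)^i=(\partial_t\psi^i_{\ j})v^j$, and I would note at this point that, because $u$ is vertical, its image lies in the single fibre $F_p$, so the trivialisation of $u^*\mH$ is induced from a \emph{constant} orthonormal frame $e_i$ of $T_pM$; in that frame the components $v^j$ are honest constants and all derivatives pass through them.

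Next I would simply expand the two terms,
\[\partial_s\eta_v^i=(\partial_s\partial_t\psi^i_{\ k})v^k,\qquad \partial_t(\psi^i_{\ j}\eta_v^j)=(\partial_t\psi^i_{\ j})(\partial_t\psi^j_{\ k})v^k+\psi^i_{\ j}(\partial_t^2\psi^j_{\ k})v^k,\]
and then differentiate (\ref{psieqn}), written as $\partial_s\psi^i_{\ k}=\psi^i_{\ j}\,\partial_t\psi^j_{\ k}$, with respect to $t$ to get
\[\partial_t\partial_s\psi^i_{\ k}=(\partial_t\psi^i_{\ j})(\partial_t\psi^j_{\ k})+\psi^i_{\ j}\,\partial_t^2\psi^j_{\ k}.\]
Contracting the last identity with $v^k$ and comparing with the previous display shows $\partial_s\eta_v^i-\partial_t(\psi^i_{\ j}\eta_v^j)=0$, whence $(D_u^{HH})^*\eta_v=0$.

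I do not expect any genuine obstacle here: the argument is purely mechanical, and morally the lemma just records that differentiating the antiholomorphicity equation produces a cokernel element. The only two points I would take care over are bookkeeping ones. First, one must check that $\eta_v$, although written in a conformal chart, is globally defined on $S^2$; this is immediate once one sees that $\eta_v$ is the $\End(T_pM)$-valued $1$-form $X\mapsto (d\psi)(jX)$ evaluated on the fixed vector $v$, which is manifestly chart-independent, and that it is of type $(0,1)$ with respect to $(j,\psi)$ by (\ref{psieqn}). Second, one must keep straight the observation above that $v$ really is constant in the chosen frame. With those in hand, the short computation above completes the proof.
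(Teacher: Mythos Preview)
Your argument is correct and follows exactly the paper's approach: substitute into the adjoint formula, use constancy of $v$, and observe that the resulting expression is the $t$-derivative of Equation~(\ref{psieqn}). You have simply written out in full the one-line computation the paper compresses into ``vanishes by Equation~(\ref{psieqn}),'' and your added remarks on the global well-definedness of $\eta_v$ and the constancy of $v^j$ in the chosen frame are helpful clarifications the paper leaves implicit.
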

\begin{proof}
Since $v$ is constant we have
\[((D_u^{HH})^*\eta_v)^i=-\frac{2}{\Theta^2}\left(\partial_s\partial_t\psi^i_{\ k}-\partial_t(\psi^i_{\ j}\partial_t\psi^j_{\ k})\right)v^k\]
which vanishes by Equation (\ref{psieqn}).\end{proof}
Note that if $\psi$ lands in a line in $F$ corresponding to the plane $\pi$ then $\partial_t\psi^i_{\ j}v^j\in\pi$. Since $u$ is somewhere immersed the correspondence $v\mapsto\eta_v$ is an isomorphism for $v\in\pi$ between $\pi$ and $\ker(D^{HH}_u)^*\cong\coker D_u^{HH}$ as claimed.\end{proof}

\subsection{Interpretation}\label{pert-slice}
Let $u$ be a $J_-$-holomorphic curve in $Z$. Observe that an infinitesimal deformation of $\omega$-compatible almost complex structures $J_-\mapsto J_-+\delta J$ gives us a natural choice of $\eta\in\Omega^{0,1}(u^*TZ)$ defined by
\[\eta(X)=\delta J(u_*(X))\]
We have defined elements
\[\eta_v:\partial_s\mapsto(\partial_t\psi)v,\ \partial_t\mapsto-(\partial_s\psi)v\]
in the cokernel of $D_u$. Here $\partial_t\psi=u_*(\partial_t)$, $\partial_s\psi=u_*(\partial_s)$. We can now understand these as coming from the following infinitesimal deformation of $J_-$
\[
\delta_v J(w)=\begin{cases}
(J_-w)v &\text{ when } w\in \mV\\
0&\text{ when } w\in\mH
\end{cases}
\]
\section{The Gromov-Witten theory of twistor lines}\label{gw-line}
\subsection{The obstruction bundle}
In this section we compute the Gromov-Witten cycles associated to the moduli space of pseudohololmorphic spheres in the homology class $A$. We recall that this moduli space has a very nice description as a homogeneous space
\begin{align*}
\mL&=\mM_{0,0}(Z,A,J_-)=\Gamma\backslash SO^+(2n,1)/SO(4)\times U(n-2)\\
\mL_1&=\mM_{0,1}(Z,A,J_-)=\Gamma\backslash SO^+(2n,1)/U(2)\times U(n-2)\\
\end{align*}
The key observation is that the moduli space is compact ($A$ is a minimal homology class). We can therefore apply the following theorem:
\begin{thm}[See \cite{MS04}, Proposition 7.2.3]
Let $Z$ be a semipositive symplectic manifold and $A$ a homology class which is not a multiple cover of a homology class $B$ with $c_1(Z)[B]=0$. If the moduli space $\mM_{0,0}(Z,A,J)$ is compact and smooth with tangent space at $u$ equal to the kernel of the linearised $\dbar$-operator $D_u$ then the cokernels of $D_u$ form a smooth vector bundle over $\mM_{0,0}(Z,A,J)$ called the obstruction bundle $\Obs$ and the Gromov-Witten class $\GW^Z_{A,k}$ may be computed by
\[\pd(\ev_k)_!\ft_k^*\obs\]
where
\begin{itemize}
\item $\pd:H^*\rightarrow H_*$ denotes Poincar\'{e} duality, $_!$ denotes cohomological pushforward,
\item $\ft_k:\mM_{0,k}(Z,A,J_-)\rightarrow\mM_{0,0}(Z,A,J_-)$ is the forgetful map,
\item $\ev_k:\mM_{0,k}(Z,A,J_-)\rightarrow Z^k$ is the evaluation map and
\item $\obs$ is the Euler class of the obstruction bundle over $\mM_{0,0}(Z,A,J_-)$.
\end{itemize}
\end{thm}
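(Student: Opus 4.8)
The plan is to deduce the formula by the standard finite-dimensional reduction: perturb the Cauchy-Riemann equation so that the moduli space becomes cut out transversally, identify the perturbed moduli space near the unperturbed one with the zero locus of a section of the bundle of cokernels, and read off its fundamental class as an Euler class. First I would check that the cokernels $\coker D_u$ fit together into a smooth real vector bundle over $\mM_{0,0}(Z,A,J)$. Since the real Fredholm index of $D_u$ is the constant $2N+2c_1(Z)[A]$ and, by the hypothesis on tangent spaces together with smoothness of the moduli space, $\dim\ker D_u$ is locally constant, the dimension $\dim\coker D_u$ is constant. Taking $L^2$-orthogonal complements to the images of the $D_u$ and invoking elliptic regularity and the usual local trivialisations, these complements assemble into a smooth bundle $\Obs\to\mM_{0,0}(Z,A,J)$; pulling back along the forgetful map produces $\ft_k^*\Obs\to\mM_{0,k}(Z,A,J)$, whose rank is the difference between the actual and virtual dimensions of $\mM_{0,k}(Z,A,J)$.

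Next comes the perturbation. Choose a generic small admissible perturbation $\nu$ of the $\dbar_J$-section --- an inhomogeneous term supported near the curves, or a perturbation inside the space of $\omega$-tame almost complex structures --- and let $\mM^{\nu}_{0,k}$ denote the perturbed moduli space. The crucial point is that for small $\nu$ no perturbed stable map can degenerate: by Gromov compactness a sequence of perturbed curves with $\nu\to0$ converges to an honest $J$-holomorphic stable map in class $A$, and since $\mM_{0,0}(Z,A,J)$ is already compact and smooth with no nodal configurations, while semipositivity together with the hypothesis that $A$ is not a multiple cover of a class $B$ with $c_1(Z)[B]=0$ rules out the remaining bubbling, $\mM^{\nu}_{0,k}$ is for small $\nu$ a closed manifold of dimension $2N+2c_1(Z)[A]+2(k-3)$ lying in an arbitrarily small neighbourhood of $\mM_{0,k}(Z,A,J)$. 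By construction it represents the Gromov-Witten pseudocycle: $(\ev_k)_*[\mM^{\nu}_{0,k}]=\GW^Z_{A,k}$.

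Then I would carry out the reduction and conclude. In a tubular neighbourhood of $\mM_{0,0}(Z,A,J)$ the perturbation term, composed with the fibrewise $L^2$-projection onto $\coker D_u$, defines a smooth section $s_{\nu}$ of $\Obs$, and the implicit function theorem identifies the nearby part of $\mM^{\nu}_{0,0}$ with the zero set $s_{\nu}^{-1}(0)$; for generic $\nu$ this section is transverse to the zero section, whence $[\mM^{\nu}_{0,0}]=\pd(e(\Obs))\cap[\mM_{0,0}(Z,A,J)]=\pd(\obs)$ with $\obs=e(\Obs)$. The same reduction runs fibrewise over the forgetful map $\ft_k:\mM_{0,k}(Z,A,J)\to\mM_{0,0}(Z,A,J)$, so that $[\mM^{\nu}_{0,k}]=\pd(\ft_k^*\obs)$ in $H_*(\mM_{0,k}(Z,A,J);\CC)$, and pushing forward along $\ev_k$ and using $f_!=\pd^{-1}f_*\pd$ gives
\[\GW^Z_{A,k}=(\ev_k)_*\pd(\ft_k^*\obs)=\pd\,(\ev_k)_!\,\ft_k^*\obs,\]
which is the claimed formula.

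The main obstacle is the perturbation/transversality step carried out uniformly over the whole compact moduli space: one must exhibit a Banach manifold of admissible perturbations large enough that a Sard-Smale argument gives genericity of transversality for $s_{\nu}$, yet such that the a priori compactness above confines $\mM^{\nu}_{0,k}$ to the region where the finite-dimensional model $s_{\nu}^{-1}(0)$ is valid and keeps it closed. Verifying precisely that the excluded bubbling cannot occur --- this is exactly where semipositivity and the condition on $A$ enter --- and that the identification $\mM^{\nu}\cong s_{\nu}^{-1}(0)$ intertwines the evaluation and forgetful maps are the remaining points requiring care.
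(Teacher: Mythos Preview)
The paper does not give its own proof of this statement: it is quoted as a known result from McDuff--Salamon (\cite{MS04}, Proposition~7.2.3) and used as a black box. Your outline is precisely the standard obstruction-bundle/Kuranishi-model argument that underlies that proposition, so there is nothing in the paper to compare against beyond the citation.

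Two small points worth tightening in your write-up. First, the roles of the hypotheses are slightly blurred: compactness of $\mM_{0,0}(Z,A,J)$ alone, via Gromov compactness applied to a sequence with $\nu\to 0$, already forces perturbed curves for small $\nu$ to be smooth and close to the unperturbed moduli space; semipositivity and the condition that $A$ not be a multiple of a Chern-zero class are needed rather to make the Gromov--Witten pseudocycle well defined and to ensure transversality is achievable by perturbing $J$ (multiply covered Chern-zero spheres being the usual obstruction). Second, the section $s_\nu$ is not literally ``the perturbation projected onto $\coker D_u$'': one first uses a right inverse of $D_u$ and the implicit function theorem to solve $\dbar+\nu\equiv 0$ modulo the cokernel, and $s_\nu$ is the residual cokernel component evaluated on that approximate solution. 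Its leading term is the projection of $\nu$, which is why generic transversality is accessible, but the identification $\mM^\nu\cong s_\nu^{-1}(0)$ needs the full implicit-function-theorem package. These are refinements rather than gaps; your plan is sound.
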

In Section \ref{linear} we proved that we are in precisely this setting and that
\begin{thm}\label{obstructionthm}
The obstruction bundle over $\mL$ is isomorphic to the tautological $SO(4)$-bundle.
\end{thm}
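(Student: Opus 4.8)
The plan is to read the statement off the analysis of the linearised operator in Section~\ref{linear}, together with a dimension count and a naturality argument. Since $A$ is the minimal homology class the moduli space $\mL=\mM_{0,0}(Z,A,J_-)$ is compact, and it is a smooth homogeneous space whose tangent space at $u$ equals $\ker D_u$ by Proposition~\ref{lintheory}(i); so we are in the setting of the quoted \cite{MS04} Proposition~7.2.3 and the cokernels $\coker D_u$ genuinely form a vector bundle $\Obs\to\mL$. By the block-triangular form of $D_u$ and surjectivity of $D_u^{VV}$ (Section~\ref{linear}) there is a canonical identification $\coker D_u\cong\coker D_u^{HH}\cong\ker(D_u^{HH})^*$ for every $u$, so it is enough to identify the family $\{\ker(D_u^{HH})^*\}_{u\in\mL}$ with the tautological $SO(4)$-bundle.

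First I would fix a twistor line $u$ lying in the line of $F_p$ cut out by a $4$-plane $\pi\subset T_pM$ together with a fixed complex structure on $\pi^\perp$. By the lemmas proving Proposition~\ref{lintheory}(ii), for $v\in T_pM$ the form $\eta_v$ with $\eta_v(\partial_s)^i=(\partial_t\psi^i_{\ j})v^j$ lies in $\ker(D_u^{HH})^*$; along this line $\partial_t\psi$ annihilates $\pi^\perp$ and takes values in $\pi$, so $\eta_v$ depends only on the orthogonal projection of $v$ onto $\pi$, and since $u$ is an immersion (being of minimal degree) the resulting map $\pi\to\ker(D_u^{HH})^*$ is injective. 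To see it is onto I would compare $\dim_{\RR}\mL=\dim SO^+(2n,1)-\dim\bigl(SO(4)\times U(n-2)\bigr)=n^2+5n-10$ with $\vdim\mM_{0,0}(Z,A,J_-)=\dim_{\RR}Z+2c_1(Z)[A]-6=n^2+5n-14$, where $c_1(Z)[A]=2(n-2)$ (from $c_1(Z)|_F=-2(n-2)H$ evaluated on a $j_F$-\emph{anti}holomorphic line); the difference is $4$, so $\dim_{\RR}\coker D_u=4=\dim_{\RR}\pi$ and $v\mapsto\eta_v$ is an isomorphism $\pi\xrightarrow{\ \sim\ }\coker D_u$ for each $u$.

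Finally I would globalise. After lifting to the cover $SO^+(2n,1)/U(n)$ of $Z$, the metric $g_Z$, the form $\omega_{\rez}$ and $J_-$ are all $SO^+(2n,1)$-invariant, so the operators $D_u$, their cokernels, the $L^2$-pairings and the assignment $v\mapsto\eta_v$ are $SO^+(2n,1)$-equivariant, and the fibrewise isomorphisms above patch into a smooth isomorphism of vector bundles from $SO^+(2n,1)\times_{SO(4)\times U(n-2)}\pi$ (the tautological $SO(4)$-bundle; $U(n-2)$ acts trivially on $\pi$, $SO(4)$ standardly) onto $\Obs$, descending over $\Gamma$ to $\mL$. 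The Euclidean metric on $\pi$ (from $T_pM$) and its orientation (from that of $T_pM$ and the complex structure on $\pi^\perp$) are precisely its $SO(4)$-structure; the transported $L^2$-metric is $SO^+(2n,1)$-invariant, hence a constant multiple of it by irreducibility of the real standard $SO(4)$-representation, which is immaterial for the isomorphism class. I expect this last step — checking that the pointwise maps $v\mapsto\eta_v$ really assemble into a smooth \emph{oriented} bundle isomorphism, connectedness of $\mL$ making the orientation comparison globally constant — to be the only point needing care; the dimension count is what guarantees the $\eta_v$, $v\in\pi$, already exhaust the cokernel.
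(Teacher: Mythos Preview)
Your proof is correct and follows essentially the same route as the paper: both rely on the block-triangular form of $D_u$, the explicit construction of $\eta_v\in\ker(D_u^{HH})^*$ from Section~\ref{linear}, and the observation that along a twistor line the map $v\mapsto\eta_v$ factors through and injects $\pi$. You add two details the paper leaves implicit---the index/dimension count showing $\dim\coker D_u=4$ so the injection is onto, and the $SO^+(2n,1)$-equivariance argument assembling the fibrewise isomorphisms into a bundle isomorphism---but these are clarifications of the same argument rather than a different approach.
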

\begin{rmk}\label{regularisedmoduliremark}
One may think of the zero-set of a section of the obstruction bundle as a ``regularised moduli space'' $\widetilde{\mM}_{0,k}(Z,A,J_-)$. There is then a very appealing picture. Take a vector field $V$ on $M$ and lift it to a section of $\Obs$ by projecting it to the tautological 4-plane bundle over the moduli space $\mL$. The regularised moduli space consists of twistor lines corresponding to 4-planes in $T_pM$ which are orthogonal to $V(p)$. If $\widetilde{\mM}_{0,k}(p)$ denotes that part of the regularised moduli space living in the twistor fibre at $p$ then whenever $V(p)\neq 0$,
\[\widetilde{\mM}_{0,0}(p)\cong SO(2n-1)/SO(4)\times U(n-3)\]
where $SO(2n-1)$ is the stabiliser of $V(p)$, and
\[\widetilde{\mM}_{0,1}(p)\cong SO(2n-1)/U(2)\times U(n-3)\stackrel{\ft}{\rightarrow} SO(2n-1)/SO(4)\times U(n-3)=\widetilde{\mM}_{0,0}(p)\]
tells us, morally, which curves will persist in that fibre after deformation of $J$. In the case $n=3$ this reduces to the standard fibration
\[SO(5)/U(2)=\CC\PP^3\rightarrow S^4=SO(5)/SO(4)\]
so we see (at least heuristically) that $Z$ is uniruled if $\dim(M)=6$.
\end{rmk}
\subsection{The algorithm}
We now give an algorithm which can be used to compute
\[\GW^Z_{A,k}=\pd(\ev_k)_!\ft_k^*\obs\]
The map $\ev_k$ factors as
\[\mM_{0,k}(Z,A,J_-)\stackrel{\rem_1\times\cdots\times\rem_k}{\xrightarrow{\hspace*{1.7cm}}}\mL_1\times\cdots\times\mL_1\stackrel{\ev\times\cdots\times\ev}{\xrightarrow{\hspace*{1.2cm}}} Z^k\]
where $\rem_j$ is the map remembering only the $j$-th marked point and
\[\ev:\mL_1=\Gamma\backslash SO^+(2n,1)/U(2)\times U(n-2)\rightarrow\Gamma\backslash SO^+(2n,1)/U(n)=Z\]
is the 1-point evaluation map. This factorisation fits into a diagram
\[
\begin{CD}
\mM_{0,k}(Z,A,J_-) @>{\prod_{j=1}^k\rem_j}>> \mL_1^k @>{(\ev)^k}>> Z^k\\
@V{\ft_k}VV @VV{(\ft_1)^k}V @.\\
\mL @>>{\Delta}> \mL^k @.
\end{CD}
\]
which implies that we need to compute
\begin{equation}\label{gwfirststep}\pd(\ev)^k_!((\ft_1)^k)^*\Delta_!\obs\end{equation}
We have $\Delta^k_!\obs=(\obs\otimes 1^{\otimes k-1})\cup\Delta^k_!1$. Recall that the projection
\[\lambda:\mL\rightarrow M\]
is a $SO(2n)/SO(4)\times U(n-2)$-bundle and that, by the Leray-Hirsch theorem, $H^*(\mL;\CC)$ is a free $H^*(M;\CC)$-module with some collection of generators $\{z_i\}$ arising as characteristic classes of the tautological $SO(4)$ and $U(n-2)$-bundles over the moduli space. Let $\{y_j\}$ be a basis for $H^*(M;\CC)$ and $z_i\lambda^*y_j$ the corresponding basis for $H^*(\mL;\CC)$.  We take a decomposition of the diagonal (Section \ref{diagdecomp})
\[\Delta^k_!1=\sum_{i_1,\ldots,i_k,j_1,\ldots,j_k}P_{\mL}^{i_1j_1,\ldots,i_kj_k}(z_{i_1}\lambda^*y_{j_1})\otimes\cdots\otimes (z_{i_k}\lambda^*y_{j_k})\]
Now substituting in Equation \eqref{gwfirststep} and using the fact that
\[\lambda\circ\ft_1=\tau\circ\ev\]
we get
\begin{thm}\label{GWbigformula-thm}
\begin{align}\label{GWbigformula}\nonumber
\GW^Z_{A,k}&=\pd\sum P_{\mL}^{i_1j_1,\ldots,i_kj_k}((\ev_!(\ft_1^*(z_{i_1}\cup\obs)))\cup\tau^*y_{j_1})\otimes\\
&\ \ \ \ \ \ \ \ \ \ \otimes((\ev_!\ft_1^*z_{i_2})\cup\tau^*y_{j_2})\otimes\cdots\otimes((\ev_!\ft_1^*z_{i_k})\cup\tau^*y_{j_k})\end{align}
\end{thm}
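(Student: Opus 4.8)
The plan is to turn the sequence of diagrams drawn above into identities of cohomological pushforwards. First I would invoke the obstruction-bundle formula (\cite{MS04}, Proposition~7.2.3): by Proposition~\ref{lintheory} the moduli space $\mL=\mM_{0,0}(Z,A,J_-)$ is smooth with tangent space $\ker D_u$ at every $u$, it is compact (because $A$ is of minimal symplectic area, so no bubbling occurs — or directly from the homogeneous-space description), and $A$ is not a multiple cover of a $c_1$-zero class since $c_1(Z)[A]>0$; hence $\GW^Z_{A,k}=\pd(\ev_k)_!\ft_k^*\obs$, where $\obs$ is the Euler class of the obstruction bundle. Then I would use the factorisation $\ev_k=(\ev)^k\circ(\rem_1\times\cdots\times\rem_k)$ together with the geometric observation that, away from the locus of colliding marked points and the nodal compactification strata (all of real codimension $\geq 2$, hence negligible for pushforwards of pseudocycles), $\mM_{0,k}$ is the $k$-fold fibre product $\mL_1\times_{\mL}\cdots\times_{\mL}\mL_1$ over $\mL$: a point is a twistor line with $k$ ordered distinct points on it, the first three being used to rigidify the reparametrisations. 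Thus the square with vertical maps $\ft_k,(\ft_1)^k$ and horizontal maps $\rem_1\times\cdots\times\rem_k,\Delta$ is Cartesian, i.e. the pullback of the fibre bundle $(\ft_1)^k$ along $\Delta$, and the base-change property for cohomological pushforward along pullbacks of oriented fibre bundles (Section~\ref{top-prelim}, $b^*p'_!=p_!e^*$; transposed with respect to the Poincar\'{e} pairings this reads $(\rem_1\times\cdots\times\rem_k)_!\,\ft_k^*=((\ft_1)^k)^*\,\Delta_!$, the relevant fibre map $\mathrm{Conf}_k(S^2)\hookrightarrow(S^2)^k$ having degree $1$) yields $\GW^Z_{A,k}=\pd\,(\ev)^k_!\,((\ft_1)^k)^*\,\Delta_!\obs$.

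The rest is bookkeeping with the projection formula. One has $\Delta^k_!\obs=(\obs\otimes 1^{\otimes k-1})\cup\Delta^k_!(1)$ (apply $f_!(\alpha\cup f^*\beta)=(f_!\alpha)\cup\beta$ to the diagonal), and into $\Delta^k_!(1)$ I would substitute the diagonal decomposition of Lemma~\ref{lma-diagdecomp}, expressed in the Leray--Hirsch basis $\{z_i\lambda^*y_j\}$ of $H^*(\mL;\CC)$ (the $z_i$ characteristic classes of the tautological bundles on $\mL$, the $y_j$ a basis of $H^*(M;\CC)$), whose Poincar\'{e} amplitudes $P_\mL^{i_1j_1,\ldots,i_kj_k}$ are the coefficients in the statement. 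Pulling back along the product map $(\ft_1)^k$ and using $\lambda\circ\ft_1=\tau\circ\ev$ (so that $\ft_1^*\lambda^*=\ev^*\tau^*$), then pushing forward along $(\ev)^k=\ev\times\cdots\times\ev$ — using that cohomological pushforward is compatible with direct products and, in each tensor factor, the projection formula $\ev_!(\ft_1^*z\cup\ev^*\tau^*y)=\ev_!(\ft_1^*z)\cup\tau^*y$, with $\obs$ absorbed into the first factor as $\ev_!(\ft_1^*(z_{i_1}\cup\obs))\cup\tau^*y_{j_1}$ — assembles exactly the claimed formula.

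The main obstacle is the second half of the first step: justifying that, for the purposes of the pushforward, $\mM_{0,k}$ may be replaced by the fibre product $\mL_1\times_{\mL}\cdots\times_{\mL}\mL_1$ — that the diagonal locus where marked points collide and the strata of $\overline{\mM}_{0,k}$ carrying nodal curves really are negligible, and that the rigidification by the first three marked points genuinely trivialises the automorphism groups so that the square is honestly a pullback of oriented fibre bundles — together with the attendant orientation bookkeeping. For the latter one appeals to the orientation remark of Section~\ref{linear}: passing to antiholomorphic lines reverses the orientation of $\mL$ along the fibre directions of $\lambda$, but this is cancelled by the simultaneous reversal of the orientation of the twistor fibre, so the signs in the final formula are unaffected. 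Everything downstream of the reduction to $\pd\,(\ev)^k_!\,((\ft_1)^k)^*\,\Delta_!\obs$ is formal manipulation with the pushforward identities recalled in Section~\ref{top-prelim}.
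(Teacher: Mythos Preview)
Your proposal is correct and follows essentially the same route as the paper: obstruction-bundle formula, factorisation of $\ev_k$ through $\mL_1^k$, the commutative square with $\Delta$ and $(\ft_1)^k$ giving $\pd(\ev)^k_!((\ft_1)^k)^*\Delta_!\obs$, then the projection formula and diagonal decomposition in the Leray--Hirsch basis together with $\lambda\circ\ft_1=\tau\circ\ev$. You are in fact more scrupulous than the paper about the ``main obstacle'' you flag (replacing $\mM_{0,k}$ by the fibre product and the orientation bookkeeping); the paper simply draws the diagram and proceeds without further comment.
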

It remains only to find the decomposition of the diagonal and to compute the fibre integrals
\[\ev_!\ft_1^*z_m,\ \ev_!(\ft_1^*(z_m\cup\obs))\]
along the $U(n)/U(2)\times U(n-2)$-bundle $\ev$ which can be done using Borel-Hirzebruch theory as in Section \ref{ev-pushfwd}.
\subsection{Examples}
We will illustrate the use of this algorithm through a number of elementary examples.
\begin{cor}
Suppose $n=2$ (so $M$ is a hyperbolic 4-manifold). Then
\[\GW^Z_{A,k}=\chi(M)A^{\otimes k}\in H^*(Z^k;\CC)\]
where $A$ is the homology class of the twistor fibre.
\end{cor}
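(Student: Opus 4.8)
The plan is to run the algorithm of Section \ref{gw-line} --- in the compact form of equation \eqref{gwfirststep}, $\GW^Z_{A,k}=\pd\,(\ev)^k_!\,((\ft_1)^k)^*\Delta^k_!\obs$ --- in the degenerate case $n=2$, where all the auxiliary moduli spaces collapse. First I would record the identifications obtained by setting $U(n-2)=U(0)=\{1\}$: the moduli space $\mL=\mM_{0,0}(Z,A,J_-)=\Gamma\backslash SO^+(4,1)/SO(4)$ is simply $M$ itself, with $\lambda=\id_M$; the one-pointed moduli space $\mL_1=\mM_{0,1}(Z,A,J_-)=\Gamma\backslash SO^+(4,1)/U(2)$ is simply $Z$; the forgetful map $\ft_1\colon\mL_1\to\mL$ becomes the twistor fibration $\tau\colon Z\to M$; and the evaluation map $\ev\colon\mL_1\to Z$, being induced by the identity inclusion $U(2)\times U(0)\hookrightarrow U(2)=U(n)$, is $\id_Z$. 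Concretely, a ``twistor line'' in a hyperbolic $4$-manifold is nothing but an entire twistor fibre $F_p\cong\CC\PP^1$, so $A=[F_p]$ really is the homology class of the twistor fibre.

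Next I would pin down the obstruction bundle and its Euler class. By Theorem \ref{obstructionthm} it is the tautological $SO(4)$-bundle over $\mL$; for $n=2$ the $4$-plane $\pi\subset T_pM$ appearing in Proposition \ref{lintheory}(ii) is all of $T_pM$, so this bundle is the tangent bundle $TM$ and hence $\obs=e(TM)=\chi(M)\,\vol_M\in H^4(M;\CC)$, normalising $\int_M\vol_M=1$. Here I would invoke the orientation remark following Proposition \ref{lintheory}: the simultaneous sign-reversal, along the fibre direction, of the orientations of the moduli space and of the twistor fibre does not affect the Gromov--Witten class, so this Euler class may be used at face value. Since $\mL=M$ carries no Leray--Hirsch generator beyond $z_0=1$, both \eqref{gwfirststep} and the equivalent formula \eqref{GWbigformula} only see the diagonal decomposition of $M$ itself, and explicitly $\Delta^k_!\obs=\Delta^k_!\bigl(\chi(M)\vol_M\bigr)=\chi(M)\,\vol_M^{\otimes k}$ is $\chi(M)$ times the Poincar\'{e} dual of $(\pt,\dots,\pt)\in M^k$.

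Feeding this into \eqref{gwfirststep}: pulling back along $(\ft_1)^k=\tau\times\cdots\times\tau$ gives $\chi(M)(\tau^*\vol_M)^{\otimes k}$, and since $(\ev)^k=\id_{Z^k}$ this is already the Gromov--Witten class up to Poincar\'{e} duality; as $\tau^*\vol_M$ is Poincar\'{e} dual to $\tau^{-1}(\pt)=F_p$, which represents $A$, we conclude $\GW^Z_{A,k}=\chi(M)A^{\otimes k}$. The same answer is transparent from the regularised-moduli picture of Remark \ref{regularisedmoduliremark}: a generic vector field $V$ on $M$, lifted to a section of $\Obs\cong TM$, vanishes at $\chi(M)$ points counted with sign, and over each zero $p$ the regularised $k$-pointed moduli space evaluates onto $F_p^k\subset Z^k$, so the total Gromov--Witten cycle is $\chi(M)A^{\otimes k}$. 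There is no genuinely hard step: the analytic input --- regularity of the moduli space in the obstruction-bundle sense, and the identification of the obstruction bundle --- is already established in Sections \ref{linear}--\ref{gw-line}, and what remains is just the bookkeeping of these degenerate identifications together with the observation that the Euler class of $TM$ contributes the factor $\chi(M)$.
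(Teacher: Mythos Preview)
Your proposal is correct and follows essentially the same approach as the paper: both identify $\mL\cong M$, $\mL_1\cong Z$, $\ft_1=\tau$, $\ev=\id_Z$, recognise the obstruction bundle as $TM$ with Euler class $\chi$, and then push through the algorithm of Section \ref{gw-line}. The only cosmetic difference is that you work directly with \eqref{gwfirststep} and compute $\Delta^k_!(\chi(M)\vol_M)=\chi(M)\vol_M^{\otimes k}$ in one step, whereas the paper expands via \eqref{GWbigformula} and the Poincar\'{e} amplitudes $P_M^{j_1\ldots j_k}$, arguing that the only surviving summand has $y_{j_1}$ of degree zero and all other $y_{j_m}$ of top degree; your shortcut is cleaner but the content is identical.
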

\begin{proof}
In this dimension $\ev:\mL_1\rightarrow Z$ and $\lambda:\mL\rightarrow M$ are diffeomorphisms so $\{z_m\}=\{1\}$ and \eqref{GWbigformula} reduces to
\[\pd\sum P_M^{j_1\ldots j_k}(\ft_1^*\obs\cup\tau^*y_{j_1})\otimes\tau^*y_{j_2}\otimes\cdots\otimes\tau^*y_{j_k}\]
where (by Theorem \ref{obstructionthm}) $\obs$ is the Euler class of the tangent bundle of $M$ and $\ft_1=\tau$ so $\tau^*\obs=c_2(\mH)=\pd(A)$. Therefore a summand is nonzero if and only if $y_{j_1}$ has nontrivial cup product with $\chi$. This means that $y_{j_1}$ must have degree zero and that therefore all other $y_{j_m}$ must be of top degree in order that the Poincar\'{e} amplitude $P^{j_1\ldots j_k}_M$ is nonvanishing. Under Poincar\'{e} duality these pullback (via $\tau^!$) to $A$, the homology class of the twistor fibre, and we get
\[\GW^Z_{A,k}=A^{\otimes k}\in H^*(Z^k;\CC)\]\end{proof}
\begin{cor}\label{gw-input}
Suppose $n=3$ (so $M$ is a hyperbolic 6-manifold).
\begin{enumerate}
\item  When $k=1$,
\[\GW^Z_{A,1}=[Z]\]
i.e. $Z$ is uniruled.
\item When $k=2$,
\begin{align*}
\GW^Z_{A,2}&=\frac{1}{4}\pd\left(\{1\otimes c_1(\mH)^2\}\cup\tau^*\Delta_!^2(1_M)\right)
\end{align*}
where $\mH$ is the horizontal distribution on $Z$ considered as a complex vector bundle.
\item When $k=3$,
\begin{align*}
\GW^Z_{A,3}&=\pd\left(\frac{1}{16}\{1\otimes c_1(\mH)^2\otimes c_1(\mH)^2\}\cup\tau^*\Delta^3_!(1_M)+\right.\\
&\left.\ \ \ +\frac{\chi(M)}{2}\{c_1(\mH)\otimes 1\otimes 1\}\cup\tau^*\Delta_!^3(\vol_M)\right)
\end{align*}
\end{enumerate}
where we have freely used the notation of Corollary \ref{cor-diagdecomp-Z}.
\end{cor}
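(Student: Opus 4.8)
The plan is to read all three formulae off the master identity \eqref{GWbigformula} of Theorem \ref{GWbigformula-thm}, feeding in $\obs = e$ (the Euler class of the tautological $SO(4)$-bundle over $\mL$, by Theorem \ref{obstructionthm}), the diagonal decompositions of $\mL$ from Corollary \ref{cor-diagdecomp}, and the Borel--Hirzebruch fibre integrals \eqref{coh-ev} along the $\CC\PP^2$-bundle $\ev\colon\mL_1\to Z$, remembering that $\ft_1^*t = c_1(B)$ and $\ft_1^*e = c_2(A)$. Throughout I would work in the presentation $H^*(\mL;\CC) = H^*(M;\CC)[e,t]/(e^2 = t^4,\ et = \lambda^*\chi)$, use the relations $c_2(\mH) = \tfrac12 c_1(\mH)^2$ and $c_3(\mH) = \tau^*\chi$ from Section \ref{top-twist} together with $\tau^*\chi = \chi(M)\tau^*\vol_M$ (since $H^6(M;\CC) = \CC\langle\vol_M\rangle$), and repeatedly invoke the projection formula and $\lambda\circ\ft_1 = \tau\circ\ev$ to slide classes pulled back from $M$ past $\ev_!$.

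The case $k=1$ is immediate and isolates the mechanism: \eqref{GWbigformula} becomes $\GW^Z_{A,1} = \pd\,\ev_!(\ft_1^*\obs) = \pd\,\ev_! c_2(A)$, and \eqref{coh-ev} gives $\ev_! c_2(A) = 1$, whence $\GW^Z_{A,1} = [Z]$; nonvanishing of this class is exactly the uniruledness of $Z$.

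For $k=2$ and $k=3$ I would first form $\Delta^k_!\obs = (e\otimes 1^{\otimes(k-1)})\cup\Delta^k_!1$ by multiplying the first tensor slot of the decompositions in Corollary \ref{cor-diagdecomp} by $e$ and collapsing the result with $e^2 = t^4$ and $et = \lambda^*\chi = \chi(M)\lambda^*\vol_M$, then push forward slot-by-slot along $\ev$, pulling the base classes out so that the $\lambda^*\Delta^k_!(1_M)$ factor (respectively $\lambda^*\Delta^k_!(\vol_M) = (\lambda^*\vol_M)^{\otimes k}$) survives as $\tau^*\Delta^k_!(1_M)$ (respectively $(\tau^*\vol_M)^{\otimes k}$). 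The decisive simplification is that $\ev$ has two-complex-dimensional fibres, so $\ev_!$ drops complex degree by $2$ and kills any slot whose integrand has complex degree below $2$; in particular $\ev_! 1 = 0$, $\ev_! c_1(B) = 0$, and $\tau^*\chi\cup\tau^*\vol_M = 0$. Combining these vanishings with the explicit values in \eqref{coh-ev} --- notably $\ev_! c_1(B)^4 = \tfrac12 c_1(\mH)^2$ and $\ev_! c_2(A)c_1(B)^3 = c_3(\mH) = \chi(M)\tau^*\vol_M$ --- leaves only finitely many terms. For $k=2$ exactly the contributions of $e\otimes t^4$ and $t^4\otimes e$ survive, reassembling into $\tfrac14\pd\bigl(\{1\otimes c_1(\mH)^2\}\cup\tau^*\Delta^2_!(1_M)\bigr)$. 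For $k=3$ the surviving part of the $\Delta^3_!(1_M)$-term produces $\tfrac1{16}\{1\otimes c_1(\mH)^2\otimes c_1(\mH)^2\}\cup\tau^*\Delta^3_!(1_M)$, while the $\chi(M)$-term is assembled from the $\Delta^3_!(\vol_M)$-part of the decomposition together with those $\Delta^3_!(1_M)$-terms one of whose slots pushes forward onto $c_3(\mH) = \chi(M)\tau^*\vol_M$.

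The only genuine work is the $k=3$ bookkeeping. One must track every permutation sum in Corollary \ref{cor-diagdecomp} through the ring relations of $\mL$, correctly identify which fibre pushforwards yield $1$, $c_1(\mH)$, $\tfrac12 c_1(\mH)^2$ or $\chi(M)\tau^*\vol_M$, and verify that the scattered contributions recombine into the fully $S_3$-symmetric expression claimed. Since \eqref{GWbigformula} privileges the first marked point (it carries the extra factor $\obs$), the reappearance of the symmetrisations $\{\cdot\}$ and of the precise rational coefficients $\tfrac1{16}$ and $\tfrac{\chi(M)}{2}$ is exactly the consistency check that the calculation has been carried out correctly; this is where I would expect to have to be most careful.
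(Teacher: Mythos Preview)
Your proposal is correct and follows exactly the paper's own approach: feed the obstruction class $\obs=e$ and the diagonal decompositions of Corollary~\ref{cor-diagdecomp} into the master formula~\eqref{GWbigformula}, then evaluate termwise using the fibre integrals~\eqref{coh-ev}. The paper's proof is in fact terser than yours (for $k=3$ it simply says ``Follows similarly''), so your description of the bookkeeping---in particular which terms die because $\ev_!1=\ev_!c_1(B)=\ev_!c_2(A)c_1(B)=\ev_!c_2(A)c_1(B)^2=0$ and which survive to produce the $\chi(M)$-contribution via $\ev_!c_2(A)c_1(B)^3=\tau^*\chi$---adds genuine detail rather than diverging from the intended argument.
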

\begin{proof}[Proof of $k=1$:] In view of \eqref{GWbigformula} it suffices to compute $\ev_!c_2(\mH)$ where $\mH$ is the tautological $U(2)$-bundle over $\mL_1$. We saw in Section \ref{ev-pushfwd} that
\[\ev_!c_2(\mH)=1=\pd^{-1}[Z]\]
\end{proof}
\begin{proof}[Proof of $k=2$:] By Corollary \ref{cor-diagdecomp}, we have the following decomposition of the diagonal
\begin{align*}
\Delta^2_!1&=\frac{1}{2}\sum_{i,j}g_M^{ab}\left(\lambda^*y_a\otimes t^4\lambda^*y_b+t\lambda^*y_a\otimes t^3\lambda^*y_b+t^2\lambda^*y_a\otimes t^2\lambda^*y_b\right.\\
&\left.\ \ \ \ +t^3\lambda^*y_a\otimes t\lambda^*y_b+t^4\lambda^*y_a\otimes\lambda^*y_b+e\lambda^*y_a\otimes e\lambda^*y_b\right)
\end{align*}
Now the result follows from \eqref{GWbigformula} and the formulae for cohomological pushforward along the evaluation map \eqref{coh-ev}.
\end{proof}
\begin{proof}[Proof of $k=3$:] Follows similarly.\end{proof}

\begin{rmk}
We can understand the case $k=2$ heuristically as follows. First, notice that the class $\frac{1}{4}c_1(\mH)^2$ pulls back to the twistor fibre as the cohomology class of a twistor line (since $c_1(\mH)$ pulls back to $2H$). Now consider a homology class $\mu\in H_k(M;\CC)$ represented by an oriented submanifold $N\subset M$. As we saw in Section \ref{vis} we can always find an almost complex structure $\psi$ on $M$ and (if the homology is torsionfree) a 2-plane field $\xi$. Define the submanifolds $N_1',\ N_1'',\ N_1'''$ of $Z$ which fibre over $N_1$ with fibre $F_1',\ F_1'',\ F_1'''$ over $n\in N_1$ equal to
\begin{itemize}
\item $F_1''$: the set of complex structures on $T_nZ$ making $\xi$ holomorphic (like a Gauss lift, consisting of a line in the twistor fibre),
\item $F_1'$: the point $\psi(n)$,
\item $F_1'''$: the cut locus of $\psi(n)$ (a copy of $\CC\PP^2$ in the twistor fibre).
\end{itemize}
Suppose that the Poincar\'{e} dual class $\check{\mu}$ can also be represented by a submanifold $N_2$, which intersects $N_1$ exactly once transversely at some point $n$. Then the Gromov-Witten contribution from twistor lines to their quantum intersection is
\[1=\GW^Z_{A,2}([N'_1],[N'''_2])=\GW^Z_{A,2}([N'''_1],[N'_2]),\ \GW^Z_{A,2}([N''_1],[N''_2])=0\]
from our formula since $[N'_1]=H\cup\tau^*\mu$, $[N'''_2]=H^3\cup\tau^*\check{\mu}$, etc. This can be seen via our earlier heuristic picture of the regularised moduli space (Remark \ref{regularisedmoduliremark}) by noticing that when the perturbing vector field $V$ is chosen with $V(n)\neq 0$ there is a unique twistor line in the regularised moduli space connecting the point $F'_1$ with the cycle $F'''_2$ and that the spheres $F''_1$ and $F''_2$ will generically project to non-intersecting spheres in $S^4=\widetilde{\mM}_{0,0}(n)$ and hence there are no connecting twistor lines.

One must be careful with this heuristic because it can be misleading. At first sight, if one had a pair of sections of the twistor bundle then their quantum intersection would pick up the $\chi(M)$ lines joining them inside the unperturbed fibres (where there is still a line through every pair of points) but a simple dimension count shows this is not the case: the Gromov-Witten class has dimension 14 while the pair of sections would have codimension 12 in $Z^2$ and we see that the chosen regularisation of the moduli space is not transverse to such a submanifold.
\end{rmk}

\section{Higher degree curves}\label{high-degree}
\subsection{Easy computations}
Some higher degree contributions are easy to compute for dimension reasons. Recall that the Gromov-Witten invariant lives in degree
\[\deg\GW^Z_{mA,k}=n(n+1)+4m(n-2)+2(k-3)\]
and $\dim Z^k=kn(n+1)$. This gives us the trivial bound
\[4m(n-2)+2(k-3)\leq (k-1)n(n+1)\]
necessary for the nonvanishing of the invariant. For example,
\begin{cor}
When $n=3$,
\begin{itemize}
\item the 1-point invariant only gets contributions from curves of degree 1,
\item the 2-point invariant only gets contributions from curves of degree 3 or less,
\item the 3-point invariant only gets contributions from curves of degree 6 or less.
\end{itemize}
\end{cor}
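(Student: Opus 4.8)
The plan is to deduce the three bounds purely from the dimension formula displayed just above, together with the elementary fact that the homology of a closed manifold vanishes above its real dimension; no further pseudoholomorphic input is required.

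First I would recall that, by definition, the Gromov--Witten class $\GW^Z_{mA,k}$ lives in $H_d(Z^k;\CC)$ where $d=n(n+1)+4m(n-2)+2(k-3)$, and that $Z^k$ is a closed oriented manifold of real dimension $kn(n+1)$. Since $H_d(Z^k;\CC)=0$ whenever $d>kn(n+1)$, a necessary condition for $\GW^Z_{mA,k}\neq 0$ is
\[4m(n-2)+2(k-3)\leq (k-1)\,n(n+1),\]
which is exactly the trivial bound already written down in the text.

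Next I would specialise to $n=3$, so that $n-2=1$ and $n(n+1)=12$, turning the inequality into $4m+2(k-3)\leq 12(k-1)$, i.e. $m\leq(5k-3)/2$. Substituting $k=1,2,3$ gives $m\leq 1$, $m\leq 7/2$ and $m\leq 6$ respectively; since $m$ is a non-negative integer this yields $m\leq 1$, $m\leq 3$ and $m\leq 6$, which are the three claimed statements. I would also note, for completeness at the low end, that for $k=1$ and $k=2$ the moduli space of constant stable maps is empty (a genus zero stable map with fewer than three marked points must be non-constant), so in those cases the only contributions come from $m\geq 1$, while for $k=3$ the $m=0$ term is the classical triple cup product.

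I do not expect any genuine obstacle: the argument is dimension bookkeeping. The one point meriting care is that the coefficient $4m(n-2)$ in the degree formula already incorporates the Chern number $c_1(Z)\cdot A=2(n-2)$ of a twistor line — a consequence of $c_1(Z)|_F=-2(n-2)H$ together with the fact that a twistor line generates $H_2$ of the twistor fibre — so one should confirm this normalisation before reading off the bounds, after which the conclusion is immediate.
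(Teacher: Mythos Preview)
Your argument is correct and matches the paper's approach exactly: the corollary is stated immediately after the trivial bound $4m(n-2)+2(k-3)\leq (k-1)n(n+1)$ as a direct specialisation to $n=3$, and the paper offers no further proof. Your extra remark on the $m=0$ case for $k<3$ is a harmless clarification not present in the paper.
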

However the special geometry of the Eells-Salamon almost complex structure gives us more information still.
\begin{lma}\label{triv-van}
Suppose $n=3$ and $\{y_i\}_{i=1}^k$ are cohomology classes on $M$ with degrees $d_i$. If $\sum_{i=1}^kd_i>6$ then
\[\GW_{mA,k}(c_1^{i_1}\tau^*y_{1}\otimes\cdots\otimes c_1^{i_k}\tau^*y_{k})=0\]
\end{lma}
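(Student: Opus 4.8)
The plan is to leverage the single geometric input that makes everything work: by the classification of $J_-$-holomorphic spheres recalled above (\cite{FP2}, Lemma 37), every $J_-$-holomorphic sphere in $Z$ is vertical, i.e.\ lies in one twistor fibre. A genus zero stable map has connected domain, so each of its components is a vertical sphere and all components lie over a single point of $M$; hence $\tau\circ u$ is constant for every $u\in\overline{\mM}_{0,k}(Z,mA,J_-)$, for every $m\geq 1$. Consequently the evaluation map factors as $\ev_k=j\circ\ev_k'$, where
\[
W:=\{(z_1,\dots,z_k)\in Z^k\mid \tau(z_1)=\cdots=\tau(z_k)\}\;=\;\tau^k\,{}^{-1}(\Delta_M)\stackrel{j}{\hookrightarrow}Z^k
\]
is the $k$-fold fibre product of $Z$ over $M$ and $\ev_k':\overline{\mM}_{0,k}(Z,mA,J_-)\to W$. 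Thus the Gromov-Witten cycle $\GW_{mA,k}$ is the image under $j_*$ of a class $\gamma\in H_*(W;\CC)$.

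Given this, I would compute the pairing homologically. Writing $\pi_\ell:Z^k\to Z$ for the projections, the invariant in question is the Kronecker pairing of $\GW_{mA,k}=j_*\gamma$ with the cohomology class $\bigcup_{\ell=1}^k\pi_\ell^*\big(c_1^{i_\ell}\cup\tau^*y_\ell\big)$, so by the projection formula it equals the pairing of $\gamma$ with $j^*\big(\bigcup_\ell\pi_\ell^*(c_1^{i_\ell}\cup\tau^*y_\ell)\big)$. Now on $W$ all the composites $\tau\circ\pi_\ell\circ j$ coincide with the common projection $\bar\tau:W\to M$, so $j^*\pi_\ell^*\tau^*y_\ell=\bar\tau^*y_\ell$, and after rearranging the cup product (graded-commutativity costs only signs, irrelevant for vanishing) one gets
\[
j^*\Big(\bigcup_{\ell=1}^k\pi_\ell^*(c_1^{i_\ell}\cup\tau^*y_\ell)\Big)\;=\;\Big(\bigcup_{\ell=1}^k(j^*\pi_\ell^*c_1)^{i_\ell}\Big)\cup\bar\tau^*\big(y_1\cup\cdots\cup y_k\big).
\]
The second factor lies in $\bar\tau^*H^{\,d_1+\cdots+d_k}(M;\CC)$, which is zero as soon as $d_1+\cdots+d_k>6=\dim_{\RR}M$. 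Hence the whole pullback class vanishes and so does $\GW_{mA,k}(c_1^{i_1}\tau^*y_1\otimes\cdots\otimes c_1^{i_k}\tau^*y_k)$. Equivalently, and perhaps cleaner to write up: for generic geometric representatives of the dual cycles the intersection number computes $\int_{[\overline{\mM}_{0,k}]}(\ev_k')^*j^*(\cdots)$, which is the integral of the zero class.

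The one point needing care --- and the step I expect to be the only real obstacle --- is that for $m\geq 2$ the moduli space $\mM_{0,k}(Z,mA,J_-)$ is noncompact, so $\GW_{mA,k}$ is defined as a pseudocycle and one must know that the factorization through $W$ survives on the compactification and its boundary strata. This is immediate from verticality of every component together with connectedness of stable domains, so no transversality or gluing analysis enters here; the genuine analytic subtleties of working with these noncompact moduli spaces are exactly what is addressed in Section \ref{prfbigbadthm}, and I would simply invoke that framework. Modulo this, the vanishing is forced purely by the dimension of the base $M$ once verticality is in hand.
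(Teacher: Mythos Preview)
Your core idea coincides with the paper's: verticality of every $J_-$-stable map forces the evaluation to land in the fibre product $W=(\tau^k)^{-1}(\Delta_M)$, and the constraint is then governed by $y_1\cup\cdots\cup y_k\in H^*(M)$, which vanishes once $\sum d_\ell>6$. The paper phrases this geometrically rather than cohomologically: choose submanifold representatives $Y_\ell\subset M$ of $\pd(y_\ell)$ and perturb so that $\bigcap_\ell Y_\ell=\emptyset$ (possible precisely because $\sum d_\ell>\dim M$); then the product of the constraint cycles $\Sigma_{i_\ell}\cap\tau^{-1}(Y_\ell)$ is disjoint from $W$, the constrained $J_-$-stable-map space is literally \emph{empty}, and an (implicit) Gromov compactness step gives the same emptiness for nearby regular $J$.

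Your cohomological route, however, has a gap at the sentence ``Thus the Gromov--Witten cycle $\GW_{mA,k}$ is the image under $j_*$ of a class $\gamma\in H_*(W;\CC)$.'' The structure $J_-$ is not regular, so $\overline{\mM}_{0,k}(Z,mA,J_-)$ does not carry the Gromov--Witten pseudocycle; for a regular $J$ the spheres are no longer vertical and $\ev_k$ no longer factors through $W$. Without a virtual fundamental class (which the paper deliberately avoids) there is no a~priori reason the GW homology class lies in $\im(j_*)$ --- indeed that statement is \emph{equivalent} to a whole family of vanishings, not just the one at hand. Section~\ref{prfbigbadthm} does not supply such a class either: it proves one specific vanishing via an explicit infinitesimal perturbation $\delta J$, not a virtual-class construction. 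The concern you flag (that the factorisation through $W$ persists on boundary strata) is correct and easy; the subtlety you miss is the passage from the non-regular $J_-$-moduli space to the invariant. The paper's ``empty constrained moduli space'' step is exactly what substitutes for your missing $\gamma$, and it is available to you too: the degree hypothesis lets you choose the $Y_\ell$ with empty common intersection, whence your $W$-factorisation plus Gromov compactness finishes the argument without ever asserting $\GW_{mA,k}\in\im(j_*)$.
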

\begin{proof}
Let us assume without loss of generality that the homology classes $\pd(y_i)$ are represented by submanifolds $Y_i$ of $M$ (we can rescale by a large integer). Recall from Section \ref{vis} that we have submanifolds $\Sigma_p$ representing $\pd(c_1(\mH)^p)$ for $p=0,1,2,3$ which transversely intersect the preimages $\tau^{-1}(Y_i)$so the Gromov-Witten invariant
\[\GW_{mA,k}(c_1^{i_1}\tau^*y_{1}\otimes\cdots\otimes c_1^{i_k}\tau^*y_{k})\]
counts (for a generic $J$) the number of $J$-holomorphic curves passing through all of the $\tau^{-1}(Y_{\ell})\cap\Sigma^{(\ell)}_{i_{\ell}}$ (where $\Sigma^{(\ell)}_{i_{\ell}}$ is choice of section/Gauss-lift/cut-locus, not necessarily the same for each value of $i_{\ell}$ for the sake of transversality).

If $\sum_{i=1}^kd_i>6$ then $\sum_{\ell=1}^k\deg(Y_{\ell})< 6(k-1)$ and we can perturb the submanifolds $Y_{\ell}$ so that the intersection $\bigcap_{\ell=1}^kY_{\ell}$ is empty. Then the moduli space of $J_-$-holomorphic curves which touch all of $\Sigma^{(\ell)}\cap \tau^{-1}(Y_{\ell})$ is empty since all $J_-$-holomorphic curves are vertical. Therefore the Gromov-Witten invariant is zero. 
\end{proof}
\begin{cor}\label{triv-van-cor}
Suppose $n=3$. We have $\GW_{mA,k}=0$ if $k<m$.
\end{cor}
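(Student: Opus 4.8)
The plan is to show that the homology class $\GW^Z_{mA,k}\in H_*(Z^k;\CC)$ pairs to zero against every element of a basis of $H^*(Z^k;\CC)$, whence it vanishes. By the cohomology ring computation of Section \ref{top-twist}, $\{1,c_1(\mH),c_1(\mH)^2,c_1(\mH)^3\}$ is a free $H^*(M;\CC)$-module basis for $H^*(Z;\CC)$, so $H^*(Z^k;\CC)$ has a basis consisting of the classes $c_1(\mH)^{i_1}\tau^*y_1\otimes\cdots\otimes c_1(\mH)^{i_k}\tau^*y_k$ with each $i_j\in\{0,1,2,3\}$ and each $y_j$ a homogeneous basis element of $H^*(M;\CC)$, of some degree $d_j$. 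Hence it suffices to prove $\GW_{mA,k}(c_1^{i_1}\tau^*y_1\otimes\cdots\otimes c_1^{i_k}\tau^*y_k)=0$ for all such insertions whenever $k<m$.

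Recall from Section \ref{high-degree} that for $n=3$ the class $\GW^Z_{mA,k}$ has homological degree $n(n+1)+4m(n-2)+2(k-3)=6+4m+2k$, so its pairing with a pure-degree cohomology class $\eta$ vanishes unless $\deg\eta=6+4m+2k$. For $\eta=c_1^{i_1}\tau^*y_1\otimes\cdots\otimes c_1^{i_k}\tau^*y_k$ this degree constraint reads
\[\sum_{j=1}^{k}2i_j+\sum_{j=1}^{k}d_j=6+4m+2k.\]
Now split into two cases. If $\sum_j d_j>6$ then $\GW_{mA,k}(c_1^{i_1}\tau^*y_1\otimes\cdots\otimes c_1^{i_k}\tau^*y_k)=0$ directly by Lemma \ref{triv-van}. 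Otherwise $\sum_j d_j\le 6$; feeding this and the bound $i_j\le 3$ (so $\sum_j 2i_j\le 6k$) into the displayed constraint gives
\[6+4m+2k=\sum_{j=1}^{k}2i_j+\sum_{j=1}^{k}d_j\le 6k+6,\]
hence $4m\le 4k$, i.e.\ $m\le k$. Contrapositively, when $k<m$ every pairing above vanishes, and therefore $\GW^Z_{mA,k}=0$.

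I do not expect a genuine obstacle; the argument is essentially degree bookkeeping on top of Lemma \ref{triv-van}. The only point to watch is that the reduction to the range $i_j\le 3$ is precisely the Leray--Hirsch basis statement of the first paragraph, and that Lemma \ref{triv-van} still applies to the reduced insertions because it is stated for arbitrary cohomology classes $y_j$ on $M$ (the reduction using $c_1(\mH)^4=8c_1(\mH)\tau^*\chi$ merely shifts cohomological degree from the fibre powers of $c_1(\mH)$ into the $H^*(M;\CC)$ factor, which is harmless since whenever that shift pushes $\sum_j d_j$ above $6$ we are already done by Lemma \ref{triv-van}).
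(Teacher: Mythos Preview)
Your proof is correct and follows essentially the same approach as the paper. Both arguments combine the degree constraint on the Gromov--Witten class with the bound $i_j\le 3$ from the Leray--Hirsch basis to force, when $k<m$, the hypothesis of Lemma~\ref{triv-van}; the paper phrases this in terms of the homological dimensions $\deg(Y_\ell)=6-d_\ell$ and the condition $\sum_\ell\deg(Y_\ell)<6(k-1)$ from the proof of that lemma, while you work directly with the cohomological degrees $d_j$ and organise the argument as a case split, but the content is identical.
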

\begin{proof}
For degree reasons we know that
\[12+4m+2(k-3)+\sum_{\ell=1}^k(6+\deg(Y_{\ell})-2i_{\ell})=12k\]
which gives
\[\sum_{\ell=1}^k\deg(Y_{\ell})=4(k-m)-6+2\sum_{\ell=1}^ki_{\ell}\]
Since $i_{\ell}\leq 3$ we get
\[\sum_{\ell=1}^k\deg(Y_{\ell})\leq 10k-4m-6\]
The inequality $k<m$ ensures that $10k-4m-6<6(k-1)$.
\end{proof}
\subsection{Obstruction method}
Now we use an `obstruction bundle' argument to deal with the case $k=m$.
\begin{thm}\label{crit-thresh}
Let $M$ be a hyperbolic 6-manifold ($n=3$) and $m\geq 3$ an integer. We have
\begin{align}
\label{GWa}\GW_{mA,m}&=0\\
\label{GWb}\GW_{2A,3}&=0
\end{align}
It then follows from the divisor equation that
\[\GW_{2A,2}=0.\]
\end{thm}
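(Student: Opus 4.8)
The plan rests on three inputs: the classification of $J_-$-holomorphic spheres as vertical configurations (Lemma 37 of \cite{FP2}), the explicit description of the obstruction bundle and its sections from Section \ref{linear}, and a dimension count sharpening Lemma \ref{triv-van}. By the classification, every stable map contributing to $\GW_{mA,k}$ for $J=J_-$ has image in a single twistor fibre $F_p\cong\CC\PP^3$, where it is a degree-$m$ rational curve for the conjugate complex structure. By Proposition \ref{lintheory} the smooth moduli space $\mM_{0,0}(Z,mA,J_-)$ has tangent space $\ker D_u$ at $u$, the cokernels assemble into the rank-$4m$ obstruction bundle $\Obs$, and every vector field $V$ on $M$ induces a section $\sigma_V$ of $\Obs$ (via Section \ref{pert-slice}) which vanishes at $u$ precisely when $V(\tau(u))$ lies in the orthogonal complement $\pi_u^{\perp}\subset T_{\tau(u)}M$ of the span $\pi_u$ of the tangent $4$-planes of $u$; in particular $\sigma_V(u)\ne 0$ as soon as $V(\tau(u))\ne 0$ and $u$ is not contained in a twistor line (since $\pi_u\subsetneq T_{\tau(u)}M$ forces all tangent directions of $u$ to be line directions for one fixed $4$-plane). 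The one feature preventing us from simply quoting \cite{MS04}, Proposition 7.2.3, is that $\mM_{0,0}(Z,mA,J_-)$ is non-compact once $m\ge 2$; the replacement, worked out in Section \ref{prfbigbadthm}, re-establishes the formula $\GW_{mA,k}=\pd(\ev_k)_!\ft_k^*\obs$ for the twistor moduli space and shows that the Gromov boundary strata (nodal and multiply covered vertical configurations) do not affect the invariant once one exhibits a section of $\Obs$ that is nowhere zero over the locus of curves meeting the imposed constraints.

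For \eqref{GWa} I would first reduce to a single case. Any test class on $Z^m$ is a combination of $c_1(\mH)^{i_1}\tau^*y_1\otimes\cdots\otimes c_1(\mH)^{i_m}\tau^*y_m$ with $0\le i_\ell\le 3$; by Lemma \ref{triv-van} its pairing with $\GW_{mA,m}$ vanishes unless $\sum_\ell\deg y_\ell\le 6$, and since $\GW_{mA,m}$ has degree $6m+6$ we need $2\sum_\ell i_\ell+\sum_\ell\deg y_\ell=6m+6$, which together force $\sum_\ell i_\ell=3m$, i.e.\ every $i_\ell=3$, and $\sum_\ell\deg y_\ell=6$. Representing $\pd(y_\ell)$ by submanifolds $Y_\ell\subset M$ and each class $c_1(\mH)^3$ by (a multiple of) the image of a generic section of the twistor bundle, $\bigcap_\ell Y_\ell$ becomes a finite set of points, and the invariant reduces, over each such point $p$, to the obstructed count of degree-$m$ rational curves in $F_p\cong\CC\PP^3$ through $m$ generic points, the answer being $\bigl(\int_M y_1\cup\cdots\cup y_m\bigr)$ times that count. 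The punchline: for $m\ge 3$ no line in $\CC\PP^3$ passes through $m$ generic points, so every curve in this family --- and, by Section \ref{prfbigbadthm}, every nodal configuration in its Gromov closure --- is somewhere immersed in a non--twistor-line direction, hence has $\pi_u=T_pM$; fixing any nonzero $v\in T_pM$ the section $\sigma_V$ (with $V$ extending $v$ near $p$) is then nowhere zero on this locus, so the Euler class $\obs$ restricts to zero there and the count vanishes. This gives $\GW_{mA,m}=0$ for all $m\ge 3$.

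For \eqref{GWb} the same bookkeeping with $m=2$, $k=3$ leaves the subcases $\sum_\ell i_\ell\in\{7,8,9\}$, and now the fibre constraints may be lines or planes in $\CC\PP^3$ rather than points, so degree-$2$ configurations that are double covers of a twistor line can meet all three constraints and the crude argument above no longer applies directly. Here I would instead use that the sections $\sigma_V$, $V$ ranging over sections of $\tau^*TM$, realise $\tau^*TM$ as a sub-bundle of $\Obs$ over the locus of curves not contained in a line, so that $\obs$ is divisible there by $\tau^*e(TM)=\chi(M)\,\tau^*\pd[\pt]$; since a point class on $M$ can be represented away from any chosen point and every surviving test class involves at least one $y_\ell$ of positive degree (because $\sum_\ell\deg y_\ell\ge 2$ forces $\sum_\ell i_\ell\le 9$), the contribution of the non-degenerate locus pairs to zero, while the double-line and nodal strata are shown to contribute nothing by the analysis of Section \ref{prfbigbadthm} (again they are cut out in strictly smaller dimension once the constraints are imposed, or carry an obstruction class divisible by a positive-degree pullback from $M$). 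This yields $\GW_{2A,3}=0$. Finally $\GW_{2A,2}=0$ is formal: choosing a divisor class $D$ on $Z$ with $D\cdot A\ne 0$ (for instance $c_1(\mH)$ or $[\omega_{\rez}]$), the divisor equation gives $0=\GW_{2A,3}(a_1,a_2,D)=2(D\cdot A)\,\GW_{2A,2}(a_1,a_2)$ for all $a_1,a_2$, whence $\GW_{2A,2}=0$.

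The heart of the matter --- and the reason the main technical result is deferred to Section \ref{prfbigbadthm} --- is the non-compactness of $\mM_{0,0}(Z,mA,J_-)$ for $m\ge 2$: one must verify that the obstruction-bundle formula survives compactification and that the Gromov boundary strata (nodal vertical configurations and multiple covers of twistor lines, all living inside the $\CC\PP^3$ fibres) contribute nothing. The expectation is that the sections $\sigma_V$ extend over these strata and stay nowhere zero on the constrained loci, or that those loci have too small a dimension, but making this rigorous --- rather than the linear algebra and Schubert-type counts sketched above --- is the genuinely delicate step.
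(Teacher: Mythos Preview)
Your overall strategy---reduce via Lemma \ref{triv-van} and then exhibit a section of the obstruction bundle that is nowhere zero on the constrained compactified moduli space---is the paper's strategy. But two points need correcting.

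First, you mischaracterise Section \ref{prfbigbadthm}. It does \emph{not} re-establish an Euler-class formula $\GW_{mA,k}=\pd(\ev_k)_!\ft_k^*\obs$ over a non-compact moduli space. It proves Theorem \ref{GWvanish}: if there is a single infinitesimal perturbation $\delta J$ with $\delta J\circ du\circ j\notin\im(D_u)$ for \emph{every} stable map $u$ in $\mMbar(J_-)$ meeting the constraints, then the invariant vanishes. The paper's proof of Theorem \ref{crit-thresh} simply produces such a $\delta_vJ$ and invokes this criterion; there is no intermediate cohomological computation.

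Second, your treatment of \eqref{GWb} diverges from the paper and does not close. You argue that $v\mapsto\sigma_v$ embeds $\tau^*TM$ into $\Obs$ over the locus of curves not contained in a line, forcing $\obs$ to be divisible by $\tau^*e(TM)$ there. But this embedding fails precisely on double covers of a twistor line, and that locus is genuinely present in the constrained moduli space (for instance when $(i_1,i_2,i_3)=(3,3,1)$ the line through the two section-points always meets the hyperplane constraint). Your deferral of this stratum to ``the analysis of Section \ref{prfbigbadthm}'' is circular, since that section supplies exactly the perturbation argument the paper uses in the first place rather than a separate treatment of boundary strata. The paper's route is more direct: since $\sum_\ell\deg y_\ell\geq 2$, the intersection $Y_1\cap Y_2\cap Y_3$ is a proper subset of $M$, and one picks a vector field $v$ whose value at each point of this set has nonzero projection onto the finitely many relevant $4$-planes; then $\delta_vJ$ satisfies the hypotheses of Theorem \ref{GWvanish}.

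For \eqref{GWa} your conclusion is right and in fact sharper than the paper's stated condition (any $v\neq 0$ works, not just those avoiding the $\pi_{ij}^\perp$). But your justification has a gap: the sentence ``every nodal configuration in its Gromov closure is somewhere immersed in a non-twistor-line direction'' is false as written for stable curves all of whose components are lines. What actually makes the argument go through is that whenever two distinct twistor lines meet at a point $\psi$, their orthogonal $2$-planes $\pi_1^\perp,\pi_2^\perp\subset T_pM$ are distinct $\psi$-complex lines in $\CC^3$ and hence intersect only at $0$; thus any $v\neq 0$ lies outside at least one of them, giving $\eta_v\neq 0$ on that component. Since no stable map through $m\geq 3$ generic points can have image contained in a single line, this handles every stratum. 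You should make this step explicit.
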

The proof will proceed by observing that in these cases there is a nonvanishing section of the obstruction bundle over the whole moduli space. If one is willing to appeal to a general theory of Kuranishi structures \`{a} la Fukaya-Ono \cite{FO} that is enough to prove vanishing of the Gromov-Witten invariant, but in our setting it should suffice to perturb the almost complex structure and indeed the explicit sections of the obstruction bundle we have arise (infinitesimally) from precisely such perturbations. Therefore we outline the proof of a slightly more general theorem from which Theorem \ref{crit-thresh} will follow below.

Before we state the theorem, recall that if $u=v_1\cup\cdots\cup v_k$ is a stable curve then the linearised operator $D_u$ is just the restriction of $\bigoplus_{i=1}^kD_{v_i}$ to the subspace of $\bigoplus_{i=1}^kW^{1,p}(v_i^*TZ)$ consisting of $k$-tuples of vector fields which agree at the nodal points. In our setting the image of $D_u$ is precisely the image of $\bigoplus_{i=1}^kD_{v_i}$. To see this, observe that if $\eta=D_{v_i}\xi_i\in\im(D_{v_i})$ then there is a vector field $\xi_j\in\ker(D_{v_j})$ for all $j$ such that $(\xi_1,\ldots,\xi_k)$ agree at the nodes: when the domain of $v_j$ has a node $n$ connecting it with the domain of $v_{j'}$ for which $\xi_j$ has already been constructed we let $\xi_{j'}^V$ be a vertical vector field in the kernel of $D^{VV}_{v_j'}$ which agrees with $\xi_j^V$ at $n$ (which exists by the transitive isometric action of $SO(2n)$ on the twistor fibre) and $\xi^H_{j'}$ be the constant horizontal lift of $\tau_*\xi_j(n)$. In summary:
\begin{lma}
The dimension of $\coker(D_u)$ for any $J_-$-holomorphic stable curve $u$ in the twistor space of a hyperbolic $2n$-manifold depends only on the homology class it represents.
\end{lma}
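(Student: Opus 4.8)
The plan is to reduce the cokernel of a stable curve to a direct sum over its irreducible components and then to evaluate each summand by an index computation that is visibly linear in the degree. First I would record that $u$ is vertical in the strongest sense: each irreducible component of $u$ is a $J_-$-holomorphic sphere, hence lies in a twistor fibre, and since the domain of a genus $0$ stable map is connected while distinct twistor fibres are disjoint, all the components $v_1,\dots,v_k$ lie in a single fibre $F_p\cong SO(2n)/U(n)$; each $v_i$ is $j_F$-antiholomorphic of some degree $d_i\ge 0$ with $\sum_{i=1}^k d_i=m$, the class of $u$ being $mA$. The target $\Omega^{0,1}(u^*TZ)$ of $D_u$ is the direct sum $\bigoplus_{i=1}^k\Omega^{0,1}(v_i^*TZ)$ with no matching at nodes, and (as established in the paragraph before the statement) $\im D_u=\bigoplus_{i=1}^k\im D_{v_i}$; hence $\coker D_u\cong\bigoplus_{i=1}^k\coker D_{v_i}$, so $\dim\coker D_u=\sum_{i=1}^k\dim\coker D_{v_i}$. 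It therefore suffices to show that $\dim\coker D_{v_i}$ is a linear function of $d_i$ vanishing at $d_i=0$, for then the sum over any partition of $m$ depends on $m$ alone.

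For a single component $v$ of degree $d$ I would appeal to Proposition \ref{lintheory}. Since $D_v^{VV}$ is surjective we have $\coker D_v\cong\coker D_v^{HH}$, hence $\dim_{\RR}\coker D_v=\dim_{\RR}\ker D_v^{HH}-\mathrm{ind}_{\RR}(D_v^{HH})$. Part (i) of that proposition identifies $\ker D_v^{HH}$ with the constant horizontal vector fields along $v$, i.e. with $T_pM$, so $\dim_{\RR}\ker D_v^{HH}=2n$ independently of $d$. Equation (\ref{CR}) exhibits $D_v^{HH}$ as a real Cauchy--Riemann operator on the complex rank $n$ bundle $v^*\mH$ over $S^2$, so Riemann--Roch gives $\mathrm{ind}_{\RR}(D_v^{HH})=2(\deg(v^*\mH)+n)=2n+2d\,c_1(\mH)[A]$. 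The constant $2n$ cancels, leaving $\dim_{\RR}\coker D_v=-2d\,c_1(\mH)[A]$, which is linear in $d$ through the origin with slope independent of $v$. Since $c_1(\mH)[A]=-2$ (equivalently $c_1(Z)[A]=2(n-2)$, from Proposition \ref{FPchern} and the virtual dimension formula) this is $4d$, and summing over components yields $\dim_{\RR}\coker D_u=4m$; for $m=1$ this recovers the rank of the $SO(4)$ obstruction bundle of Theorem \ref{obstructionthm}.

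I expect the one genuinely delicate point to be the identification $\coker D_u\cong\bigoplus_i\coker D_{v_i}$, which rests on the equality $\im D_u=\bigoplus_i\im D_{v_i}$; this relies on the kernels of the $D_{v_j}^{VV}$ being large enough (by the transitive isometric $SO(2n)$-action on the twistor fibre) to fix the nodal values of a given preimage, proceeding component by component down the tree of the domain, which is precisely the argument indicated in the paragraph before the statement. A smaller technical point is that $D_v^{HH}$ is only a \emph{real} Cauchy--Riemann operator; but its $\CC$-antilinear part is zeroth order, hence a compact perturbation, so its real index is unaffected and equals $2(\deg(v^*\mH)+n)$. Beyond these two observations the argument is routine index bookkeeping.
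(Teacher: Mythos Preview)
Your proof is correct and follows the same route as the paper. The paper's argument is the paragraph immediately preceding the lemma: it establishes $\im D_u=\bigoplus_i\im D_{v_i}$ by the node-matching trick (using the transitive $SO(2n)$-action on the fibre for the vertical part and constant horizontal lifts for the horizontal part) and then simply states the lemma as a summary, leaving the reader to extract the componentwise cokernel dimensions from Proposition~\ref{lintheory}(i) and an index count. You make that step explicit --- identifying $\ker D_v^{HH}\cong T_pM$, applying Riemann--Roch to the real Cauchy--Riemann operator of Equation~(\ref{CR}) on $v^*\mH$, and reading off $\dim_{\RR}\coker D_{v_i}=4d_i$ so that the total is $4m$ --- which is exactly the computation the paper suppresses. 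Your identification of the delicate point (the image equality for the nodal operator) matches what the paper in fact devotes its attention to.
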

\begin{thm}\label{GWvanish}
Let
\begin{itemize}
\item $(Z,\omega)$ be a semipositive $2N$-dimensional symplectic manifold and $\mJ$ be the space of $\omega$-compatible domain-dependent almost complex structures (where the domain is $S^2$),
\item $J_-\in\mJ$ be a particular choice of such an almost complex structure,
\item $\beta$ be a homology class in $H_2(Z;\ZZ)$,
\item $X_1,\ldots,X_k$ be a collection of submanifolds such that
\[\codim(X_1\times\cdots\times X_k\subset Z^k)=2N+2c_1(\beta)+2(k-3)\]
and for any $J\in\mJ$
\[\mM(J):=\mM(Z,\beta,\{X_i\},J)\]
denote the moduli space of $J$-holomorphic curves $u$ representing $\beta$ with $k$ marked points $z_1,\ldots,z_k$ in the domain such that $u(z_i)\in X_i$,
\item $\mMbar(J)$ denote the stable map compactification of $\mM(J)$ and $\mM_T(J)$ the stratum of stable maps modelled on a bubble tree $T$,
\item $\exc$ be an even integer (the \emph{excess dimension}),
\end{itemize}
such that
\begin{itemize}
\item each stratum $\mM_T(J_-)$ (modelled on a bubble tree $T$ with $e$ edges) is a smooth manifold of dimension $\exc-2e$ whose tangent space at $u$ is isomorphic to the kernel of the homomorphism
\[\ker(D_u)\rightarrow \oplus_{i=1}^k\nu_{u(z_i)}X_i\]
given by projecting a vector field onto the normal direction to the submanifold $X_i$.
\item the dimension of $\coker(D_u)$ is $\exc$ for all $u\in\mMbar(J)$,
\item for each $u\in\mM_T(J_-)$ with $||du||_{L^{\infty}_0}<c$, each $T'<T$ and each sufficiently small gluing datum $\underline{a}$ there is a neighbourhood $\nu$ of $u$ in the space of stable maps modelled on $T$ and a gluing map
\[\Gl(u,\underline{a},c):\nu\cap\mM_T(J_-,c)\rightarrow\mM_{T'}(J_-,\infty)\]
satisfying Property ($\dag$) of Proposition \ref{glue}.
\item there exists a $\delta J\in T_{J_-}\mJ$ such that
\[\delta J\circ du\circ j\not\in\im(D_u)\]
for all $J_-$-holomorphic stable maps $u\in\mMbar(J)$.
\end{itemize}
Then the genus 0 Gromov-Witten invariant
\[\GW^Z_{\beta,k}(X_1,\ldots,X_k)=0.\]
\end{thm}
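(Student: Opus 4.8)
The plan is to realise the cokernels $\coker(D_u)$ as an honest rank-$\exc$ topological vector bundle $\Obs$ over the compact stratified space $\mMbar(J_-)$, to observe that $\delta J$ furnishes a nowhere-vanishing section of $\Obs$, and to conclude that the Gromov--Witten invariant vanishes because it is (a pushforward of) the Euler class of $\Obs$, which is killed by the section. This is the mechanism announced just before Theorem \ref{crit-thresh}, and it is a stratified generalisation of \cite{MS04}, Proposition 7.2.3.

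First I would assemble $\Obs$. Over the top stratum $\mM(J_-)$ the cokernels form a vector bundle since $D_u$ has locally constant index and, by the second hypothesis, constant cokernel dimension $\exc$. Over a boundary stratum $\mM_T(J_-)$ the same dimension count holds, and the linear part of the gluing estimates packaged in Property $(\dag)$ of Proposition \ref{glue} shows that the cokernel spaces vary continuously across the gluing; hence $\coker(D_u)$ assembles into a topological vector bundle $\Obs\to\mMbar(J_-)$ of rank $\exc$ over a compact space whose top stratum has dimension $\exc$. The first hypothesis --- that the tangent space of each stratum $\mM_T(J_-)$ at $u$ is the kernel of the natural projection of $\ker(D_u)$ onto the normal directions to the $X_i$ at the marked points --- certifies that $(\mMbar(J_-),\Obs)$ is the correct finite-dimensional deformation--obstruction model for the constrained problem; granting this, the gluing/implicit-function argument of \cite{MS04}, Proposition 7.2.3 (run here uniformly over the stratified compactification, which is precisely what Property $(\dag)$ is for) computes $\GW^Z_{\beta,k}(X_1,\dots,X_k)$ as the pushforward along $\ev_k$ of the pullback along $\ft_k$ of the Euler class of $\Obs$.

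Next I would build the section: for $u\in\mMbar(J_-)$ set $s(u)$ to be the class of the $(0,1)$-form $\delta J\circ du\circ j$ in $\coker(D_u)$. This is continuous in $u$, and by the last hypothesis it is nowhere zero, so the Euler class of $\Obs$ is zero and hence $\GW^Z_{\beta,k}(X_1,\dots,X_k)=0$. Equivalently --- and this is the geometric picture that $\delta J$ really encodes --- one can argue that the $\omega$-compatible correction $J_\nu\in\mJ$ of the path $J_-+\nu\,\delta J$ has $\mM(J_\nu)=\emptyset$ for all small $\nu>0$: if not, pick $\nu_i\to0$ and $u_i\in\mM(J_{\nu_i})$; semipositivity and $C^\infty$-convergence $J_{\nu_i}\to J_-$ let Gromov compactness extract a $J_-$-holomorphic limit $u_\infty$ in class $\beta$ whose marked points still lie on the closed submanifolds $X_j$, so $u_\infty\in\mMbar(J_-)$; then near $u_\infty$ the gluing maps of Proposition \ref{glue} and an implicit-function argument parametrise the $J_\nu$-holomorphic stable maps satisfying the incidence conditions, for small $\nu$, by the zeros of a fibrewise map into $\Obs$ whose leading term at the point representing $u_\infty$ is $\nu\,s(u_\infty)+o(\nu)$, which is nonzero --- a contradiction.

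The step I expect to be the main obstacle is exactly this uniform analysis near the boundary strata: one must show that the obstruction expansion $\nu\,s(u_\infty)+o(\nu)$ holds with estimates that do not degenerate as $u_\infty$ approaches or sits in a deeper stratum, and that passing from $J_-+\nu\,\delta J$ to its $\omega$-compatible correction $J_\nu$ does not spoil the leading-order term. Establishing these uniform estimates across gluing regions is the content that Property $(\dag)$ of Proposition \ref{glue} is engineered to supply, so in practice the argument reduces to invoking that proposition together with the hypotheses on the strata; the rest is formal.
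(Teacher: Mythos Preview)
Your ``Equivalently'' paragraph is essentially the paper's argument: assume the invariant is nonzero, pick a path $J_t$ with $\dot J_0=\delta J$, extract a Gromov limit $u_\infty\in\mMbar(J_-)$ of a sequence of constrained $J_{t_k}$-curves, and derive a contradiction from the local structure of the parametric moduli space near $(u_\infty,J_-)$. The paper does not attempt your first approach (assembling a global topological obstruction bundle over $\mMbar(J_-)$ and invoking an Euler class); that route would require justifying what ``Euler class'' and ``pushforward'' mean over a stratified space and is sidestepped entirely.

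There is one genuine ingredient you have not written down that the paper makes essential use of. You phrase the local analysis as an obstruction expansion $\nu\,s(u_\infty)+o(\nu)$, but to run an implicit function theorem near $u_\infty$ you need the relevant linearisation to be surjective, and $D_{u_\infty}$ alone is not (its cokernel has dimension $\exc$). The paper fixes this by \emph{extending} the one-parameter family $J_t$ to an $\exc$-dimensional $\epsilon$-perturbation $\kappa:B_\epsilon^{\exc}\to\mJ$ with $\iota_{u_\infty}(K)\cap\im(D_{u_\infty})=\{0\}$, so that $D_{u_\infty}+\iota_{u_\infty}$ is onto; this extension exists by the standard transversality theorem, applied componentwise to the stable curve $u_\infty$ and using domain-dependence to handle multiply-covered components. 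With this in hand, Scholium~\ref{handy} says the parametric zero locus $\{(v,Y):\dbar_{J_Y}v=0\}$ near $(u_\infty,0)$ is a graph over $\ker(D_{u_\infty})$; but the hypotheses force the unconstrained $J_-$-moduli stratum already to be smooth with tangent space $\ker(D_{u_\infty})$, so the graph has $Y$-component identically zero, contradicting $Y_k=t_k\neq 0$. The bubbled case $T'<T$ is then handled by the gluing map of Proposition~\ref{glue}: one chooses $\kappa$ gluable, and Property~($\dag$) places $u_k$ in the image of a gluing map that by hypothesis lands in $\mM_{T'}(J_-,\infty)$, again forcing $Y_k=0$.

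So your outline is correct, but the step you flag as the ``main obstacle'' is resolved not by uniform obstruction estimates but by enlarging the deformation space of $J$ to absorb the whole cokernel and then comparing dimensions; this replaces your Kuranishi-style expansion with a cleaner graph argument.
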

When we have defined Property ($\dag$) of Proposition \ref{glue} we will show it is satisfied in our case, see Remark \ref{SalEtAl}. We postpone the proof of Theorem \ref{GWvanish} to Section \ref{prfbigbadthm}.
\begin{proof}[Proof of Theorem \ref{crit-thresh}]
Equation \eqref{GWa} concerns the equality case $m=k$ from the proof of Corollary \ref{triv-van-cor}. Therefore the only nonvanishing Gromov-Witten invariants are of the form
\[\GW_{mA,m}(c_1^3\tau^*y_1\otimes\cdots\otimes c_1^3\tau^*y_m)\]
where $\pd(y_1),\ldots,\pd(y_m)$ are represented by submanifolds $Y_1,\ldots,Y_m$ which intersect transversely in a collection of points $S$. The moduli space of $J_-$-holomorphic curves connecting the submanifolds is now $\coprod_{s\in S}C_s$ where $C_s$ is the space of degree $m$ stable curves in $\tau^{-1}(s)$ passing through the points $p_{\ell}=\Sigma^{(\ell)}_3\cap\tau^{-1}(s)$. Let us write $X_i=\Sigma^{(i)}_3\cap\tau^{-1}(Y_i)$ and let $\{z_i\}_{i=1}^m$ be a collection of distinct points in $S^2$.

Let $v$ be a vector field on $M$ such that at every point $s\in S$, $v$ projects orthogonally to a nonzero vector in any 4-plane corresponding to a twistor line connecting two of the points $p_i$ above $s$. Now $\delta_v J$ as constructed in Section \ref{pert-slice} satisfies the assumptions of Theorem \ref{GWvanish}.

The same argument works for Equation \eqref{GWb} but one must be slightly careful because now the submanifolds $Y_i$ can intersect in something bigger than a point. The only issue is to find a suitable vector field $v$ which has the relevant behaviour over $Y_1\cap Y_2\cap Y_3$, but because this triple intersection is not the whole of $M$ (for dimension reasons) it is always possible to do so.
\end{proof}
\section{Proof of Theorem \ref{GWvanish}}\label{prfbigbadthm}
We begin by stating the relevant implicit function and gluing theorems we need for the proof. We have made our statements as close as possible to those in \cite{MS04} for the reader's convenience. In the sequel $(Z,\omega)$ will always denote a compact symplectic manifold, $\mJ$ the space of $C^r$-differentiable domain-dependent $\omega$-compatible almost complex structures (for some $r>2$).
\begin{dfn}
By an {\em $\epsilon$-perturbation at $J$} we mean a smooth embedding $\kappa:B_{\epsilon}\rightarrow\mJ$ of a finite-dimensional compact Euclidean $\epsilon$-ball centred at $\kappa(0)=J$. For $Y\in B_{\epsilon}$ we will write $J_Y:=\kappa(Y)$, $g_Y$ for the associated almost K\"{a}hler metric and $W^{1,p}_Y,\ L^p_Y,\ C^r_Y$ for norms taken with respect to the metric $g_Y$. We write $K:=T_J\kappa(B_{\epsilon})$. If $u$ is a $W^{1,p}$-map $\Sigma\rightarrow Z$ from a Riemann surface $(\Sigma,j)$ then we denote by $\iota_u:K\rightarrow \Omega^{0,1}(u^*TZ)$ the map sending $Y\in K$ to $\frac{1}{2}Y\circ du\circ j$ (we blur the distinction between $K$ and $B_{\epsilon}$, writing $Y$ for elements of either). We will also write $\mB_Y$ for the Banach manifold of $W^{1,p}_Y$-maps from $\Sigma$ to $Z$ representing some homology class $\beta$.
\end{dfn}
Recall that if $d\vol$ is a volume form on a complex Riemann surface $(\Sigma,j)$ then for any $p>0$ we denote by $c_p(d\vol_{\Sigma})$ the norm of the Sobolev embedding $W^{1,p}(\Sigma)\hookrightarrow C^0(\Sigma)$ where the norm on $C^0(\Sigma)$ is the $L^{\infty}$-norm. We also note that for any Riemannian vector bundle $E\rightarrow\Sigma$ the $L^{\infty}$-norm of a section is bounded above by $c_p(d\vol_{\Sigma})$ times its $W^{1,p}$-norm (see \cite{MS04}, Remark 3.5.1).
\begin{prp}[Implicit function theorem]\label{IFT}
Let $(\Sigma,j)$ be a compact Riemann surface and $p>2$. Let $\kappa$ be an $\epsilon$-perturbation at $J_0$. Then for every constant $c_0>0$ there exists a constant $\delta>0$ such that the following holds for every volume form $d\vol_{\Sigma}$ on $\Sigma$ satisfying $c_p(d\vol)\leq c_0$. Suppose $u\in W^{1,p}_0(\Sigma,Z)$ and $(\xi_0,Y_0)\in W^{1,p}_0(\Sigma,u^*TZ)\times T_{J_0}\kappa(B_{\epsilon})$ satisfy
\[||du||_{L^p_0}\leq c_0,\ ||\xi||_{W^{1,p}_0}\leq\frac{\delta}{16},\ ||Y||_{C_0^r}\leq\frac{\delta}{16},\ ||\dbar_{J_Y}(\exp^{g_Y}_u(\xi))||_{L^p}\leq\frac{\delta}{4c_0}\]
Moreover suppose that $Q_u:L^p_0(\Sigma,\Lambda^{0,1}\otimes_J u^*TZ)\rightarrow W^{1,p}_0(\Sigma,u^*TZ)\times T_{J_0}\kappa(B_{\epsilon})$ is a right inverse of $D_u+\iota_u$ such that
\[(D_u+\iota_u)Q_u=\id,\ ||Q_u||\leq c_0\]
and suppose that $\iota_u(K)$ is a complementary subspace to $\im(D_u)$. Then there exists a unique $(\xi',Y')=Q_u\eta\in W^{1,p}_0(\Sigma,u^*TZ)$ such that
\[\dbar_{J_{Y+Y'}}(\exp^{g_{Y+Y'}}_u(\xi+\xi'))=0,\ ||\xi+\xi'||_{W^{1,p}_0}\leq\frac{\delta}{2},\ ||Y+Y'||_{C_0^r}\leq\frac{\delta}{2}.\]
\end{prp}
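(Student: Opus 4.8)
The plan is to deduce the statement from the abstract Newton--Picard iteration of \cite{MS04}, Proposition A.3.4, following essentially verbatim the proof of the standard implicit function theorem for pseudoholomorphic curves (\cite{MS04}, Proposition 3.5.2); the only genuinely new feature is that the almost complex structure is now allowed to vary in the finite-dimensional family $\kappa$, which is accommodated by adjoining the term $\iota_u$ to the linearised operator. Concretely, I would fix $u$ and, for $\zeta\in W^{1,p}_0(\Sigma,u^*TZ)$ and $Y\in K$ of small norm, form $\exp^{g_Y}_u(\zeta)$ together with a $J_Y$-complex-linear parallel transport $\Phi_u(\zeta,Y)\colon\Omega^{0,1}\big(\exp^{g_Y}_u(\zeta)^*TZ\big)\to\Omega^{0,1}_{J_Y}(u^*TZ)$ along the geodesics $s\mapsto\exp^{g_Y}_u(s\zeta)$, and set
\[\mathcal{F}_u(\zeta,Y):=\Phi_u(\zeta,Y)\,\dbar_{J_Y}\!\big(\exp^{g_Y}_u(\zeta)\big)\in L^p_0\big(\Sigma,\Lambda^{0,1}\otimes_{J_0}u^*TZ\big).\]
A zero of $\mathcal{F}_u$ is exactly a pair $(\zeta,Y)$ for which $\exp^{g_Y}_u(\zeta)$ is $J_Y$-holomorphic, so the assertion of the Proposition is equivalent to finding a unique small $(\xi',Y')\in\im(Q_u)$ with $\mathcal{F}_u(\xi+\xi',Y+Y')=0$.

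The next step is to identify the differential $d\mathcal{F}_u(0,0)=D_u+\iota_u$. The $\zeta$-derivative is the usual linearised Cauchy--Riemann operator $D_u$: here one uses $\Phi_u(0,0)=\mathrm{id}$, and the $\nabla J_0$-correction terms are absorbed exactly as in \cite{MS04}, Section 3.1. The $Y$-derivative of $Y\mapsto\dbar_{J_Y}(u)$ at $Y=0$, evaluated on the $J_0$-holomorphic map $u$, is $\tfrac12\,Y\circ du\circ j=\iota_u(Y)$. Since by hypothesis $\iota_u(K)$ is a complement to $\im(D_u)$, the operator $D_u+\iota_u\colon W^{1,p}_0(\Sigma,u^*TZ)\times K\to L^p_0$ is surjective and $Q_u$ is a genuine bounded right inverse of it; this is also where uniqueness of the solution inside $\im(Q_u)$ will come from.

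The heart of the matter is the quadratic estimate: there should be a constant $c=c(c_0)$, depending only on the geometry of $(Z,\omega)$, on $p$, and on the $C^r$-size of $\kappa$, but \emph{not} on the domain $(\Sigma,j)$ so long as $c_p(d\vol_\Sigma)\le c_0$, such that for $\|du\|_{L^p_0}\le c_0$ and all $(\zeta_i,Y_i)$ of sufficiently small norm
\[\big\|d\mathcal{F}_u(\zeta_1,Y_1)-d\mathcal{F}_u(\zeta_2,Y_2)\big\|_{\mathrm{op}}\le c\,\big(\|\zeta_1-\zeta_2\|_{W^{1,p}_0}+\|Y_1-Y_2\|_{C^r_0}\big).\]
One proves this by Taylor-expanding the smooth maps $\zeta\mapsto\exp^{g_Y}_u(\zeta)$, $\Phi_u$, and $(z,x,Y)\mapsto J_Y(z,x)$, using $\|du\|_{L^p_0}\le c_0$ together with the Sobolev embedding $W^{1,p}_0(\Sigma)\hookrightarrow C^0(\Sigma)$, whose operator norm is $c_p(d\vol_\Sigma)\le c_0$, to convert $W^{1,p}$-smallness of $\zeta$ into $C^0$-smallness. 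This is the argument of \cite{MS04}, Section 3.5; the one new point is that one also Taylor-expands in the $Y$-variable, which is legitimate because $\kappa$ is a $C^r$-embedding of a \emph{compact} ball, so all $Y$-derivatives of $g_Y$ and $J_Y$ are bounded uniformly for $Y\in B_\epsilon$.

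With these ingredients I would apply \cite{MS04}, Proposition A.3.4, to the shifted map $(\zeta',Y')\mapsto\mathcal{F}_u(\xi+\zeta',Y+Y')$ with approximate right inverse $Q_u$. Its value at the origin is $\Phi_u(\xi,Y)\,\dbar_{J_Y}(\exp^{g_Y}_u(\xi))$, of $L^p$-norm at most $2\|\dbar_{J_Y}(\exp^{g_Y}_u(\xi))\|_{L^p}\le\delta/(2c_0)$ once $\delta$ is small enough that $\Phi_u(\xi,Y)$ is close to the identity; hence its image under $Q_u$ has norm at most $\delta/2$. The quadratic estimate then furnishes the contraction hypothesis $\|Q_u\big(d\mathcal{F}_u(\xi+\zeta',Y+Y')-d\mathcal{F}_u(\xi,Y)\big)\|\le\tfrac12$ on the ball of radius $\delta$, provided $\delta$ is chosen small depending on $c_0$ (this is where the hypotheses $\|\xi\|_{W^{1,p}_0},\|Y\|_{C^r_0}\le\delta/16$ are consumed). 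The iteration produces a unique $(\xi',Y')\in\im(Q_u)$ with $\mathcal{F}_u(\xi+\xi',Y+Y')=0$ and norm at most twice that of $Q_u$ applied to the value at the origin, and routine bookkeeping of the numerical constants — the factors of $16$ versus $2$ in the statement leave generous slack for the $(1+o(1))$ factors from $\Phi_u$ and for the factor $2$ in the iteration — gives the asserted bounds $\|\xi+\xi'\|_{W^{1,p}_0},\|Y+Y'\|_{C^r_0}\le\delta/2$. I expect the main obstacle to be not the iteration itself, which is a black box once the hypotheses are checked, but the uniform control of the $Y$-dependence: one must verify that moving the metric $g_Y$ simultaneously inside the exponential map and inside the parallel transport $\Phi_u$ does not destroy the $c_0$-uniformity of the quadratic estimate, which is precisely where compactness of $B_\epsilon$ and the $C^r$-regularity of $\kappa$ are needed.
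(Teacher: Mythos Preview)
Your proposal is correct and follows essentially the same route as the paper: reduce to the Newton--Picard scheme of \cite{MS04}, Proposition A.3.4, by writing $\mF(\zeta,Y)$ as the trivialised $\dbar_{J_Y}$-section, identify its derivative at the origin as $D_u+\iota_u$, and prove a quadratic estimate uniform in the domain metric subject to $c_p(d\vol_\Sigma)\leq c_0$.

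The one place where the paper is more careful than your sketch is the trivialisation in the $Y$-direction. Your map $\Phi_u(\zeta,Y)$ lands in $\Omega^{0,1}_{J_Y}(u^*TZ)$, not in $\Omega^{0,1}_{J_0}(u^*TZ)$, so as written $\mF_u$ does not take values in a fixed Banach space. The paper resolves this by choosing a smooth vector field $X$ on $\Sigma$ vanishing on a set of measure zero and using the assignment $\eta\mapsto\eta(X/|X|)$ to identify $\mE_{(u,Y)}$ with $L^p_0(u^*TZ)$ for every $Y$; this gives the missing isomorphism $\psi_Y:\mE_{(u,0)}\to\mE_{(u,Y)}$. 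You correctly flag the $Y$-dependence as the delicate point at the end of your proposal, and the fix is exactly this; once you insert it, your argument and the paper's coincide.
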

\begin{proof}
The proof is almost identical to that of (\cite{MS04}, Theorem 3.5.2). We explain the setup and state the necessary quadratic estimate, leaving the rest to the enthusiastic reader. We first observe that the norms $W^{1,p}_Y$ (or $L^p_Y$) are Lipschitz equivalent for different $Y$ by compactness of $Z$ and that the Lipschitz coefficient $\ell_0$ can be chosen uniformly since the ball $B_{\epsilon}$ is compact. For each $Y$ let $r_Y$ denote the injectivity radius of $g_Y$ and set
\[I:=\min_{Y\in B_{\epsilon}}\left(\frac{r_Y}{100}\right)\]
(We are perhaps overly cautious, but for the proof of the quadratic estimate we will need to work deep inside a geodesic ball for a varying metric). Now for any $\xi\in W^{1,p}_0(u^*TZ)$ with $||\xi||_{W^{1,p}_0}<\epsilon_1=\frac{I}{c_0\ell_0}$ the exponential map
\[\xi\mapsto\exp^{g_Y}_u(\xi)\]
is an injective continuous map
\[V:=\{\xi\in W^{1,p}(u^*TZ) : ||\xi||_{W^{1,p}_0}<\epsilon_1\}\}\rightarrow\mB_Y\]
whose image we denote by $\nu_Y$. Write $\nu:=\bigcup_{Y\in B_{\epsilon}}\nu_Y$ and observe that exponentiation gives a trivialisation
\[\exp:V\times K\rightarrow\nu\]
There is a natural Banach bundle $\mE$ over $\nu$ whose fibre at $v\in\nu_Y$ is
\[\mE_{(v,Y)}:=L_Y^p(\Sigma,\Lambda^{0,1}\otimes_{J_Y}v^*TZ)\]
We must now trivialise this bundle compatibly with $\exp$. First we use parallel transport along geodesics using the $J_Y$-Hermitian connection $\tilde{\nabla}^Y$ associated to the Levi-Civita connection $\nabla^{g_Y}$ to construct isomorphisms
\[\Phi_{(v,Y)}:\mE_{(u,Y)}\rightarrow\mE_{(v,Y)}\]
We must still trivialise in the $Y$-direction. To this end we fix a smooth vector field $X$ on $\Sigma$ which vanishes on a set of measure zero. Recall that $X$ and $\alpha\in L_0^p(u^*TZ)$ together determine $\eta\in \mE_{(u,Y)}$ by the condition that $\eta(X/|X|)=\alpha$ almost everywhere (since then $\eta(jX/|X|)=-J_Y\alpha$ almost everywhere) - $L^p_Y$ integrability follows from $L^p_0$-integrability by Lipschitz equivalence of the norms. This gives an isomorphism $\psi_Y:\mE_{(u,0)}\rightarrow\mE_{(u,Y)}$ for all $Y$. Now the compositions $\psi_Y\circ\Phi_{(v,Y)}:E:=\mE_{(u,0)}\rightarrow\mE_{(v,Y)}$ give a trivialisation of the bundle $\exp^*\mE\cong E\times V\times K\rightarrow V\times K$ compatible with the diffeomorphism $V\times K\rightarrow\nu$.

The natural section $\dbar:\nu\rightarrow\mE$ taking $(u,J)$ to $\dbar_J(u)$ pulls back to a section $\mF:V\times K\rightarrow E$ of the trivialisation which we consider as a function between Banach spaces. We observe that
\[d_{(0,0)}\mF(\xi,Y)=D_u\xi+\frac{1}{2}Y\circ du\circ j\]
The key step in proving the implicit function theorem is the quadratic estimate:
\begin{lma}
In the setting of Proposition \ref{IFT}, there exists a constant $C>0$ such that the following holds for every volume form $d\vol_{\Sigma}$ with $c_p(d\vol_{\Sigma})\leq c_0$. If $||\xi||_{L^{\infty}_0}\leq c_0$ then
\[||d_{(\xi,Y)}\mF-D_u-\frac{1}{2}Y\circ du\circ j||\leq C\left(||\xi||_{W^{1,p}_0}+||Y||_{C_0^r}\right)\]
where the norm on the left is the operator norm.
\end{lma}
The proof of the proposition now follows precisely the same lines as (\cite{MS04}, Theorem 3.5.2).
\end{proof}
\begin{rmk}
When $u$ is a stable curve modelled on a bubble tree $T$ there is an exactly analogous statement which asserts existence and uniqueness of nearby stable curves modelled on the same bubble tree. We use the notation $\mJ_T$ for the space of $C^r$-smooth domain-dependent $\omega$-compatible complex structures whose domain is modelled on a bubble tree $T$. We will also write $T'<T$ to indicate that a bubble tree $T'$ is obtained from $T$ by merging bubbles (and hence decreasing the number of edges).
\end{rmk}
Employing the notation of the proof of our implicit function theorem we make the following observation.
\begin{sch}\label{handy}
If $u$ is a $J_0$-holomorphic stable map and $\kappa:B_{\epsilon}\rightarrow\mJ_T$ is an $\epsilon$-perturbation at $J_0$ such that $\iota_u(K)$ is a complement for $\im(D_u)$ then there is a small ball $0\in U\subset V\times K$ such that $\mF^{-1}(0)\cap U$ is the image of a $C^r$-smooth map $\ker(D_u)\oplus 0=\ker(D_u+\iota_u)\rightarrow V\times K$ of the form
\[(\xi,0)\mapsto(\xi,0)+Q\phi(\xi)\]
This is precisely the space of stable maps near $u$ which are $J_Y$-holomorphic for some $Y$ near 0.
\end{sch}
Before we state the gluing theorem we introduce some further notation. Let $T$ and $T'$ be bubble trees with $T'<T$: recall that a bubble tree consists of a configuration of marked domains $\Sigma_i$, $i\in I$ where some of the marked points are called nodes. By a node $n$ we mean a quadruple $(\Sigma_{i(n)},\Sigma_{j(n)},z_{i(n)},z_{j(n)})$ consisting of two (different) domains $\Sigma_{i(n)}$ and $\Sigma_{j(n)}$ and marked points $z_{i(n)}\in\Sigma_{i(n)}$, $z_{j(n)}\in\Sigma_{j(n)}$. We write $N$ for the set of nodes and think of the domain of the bubble tree as
\[\Sigma_T=\bigcup_{i\in I}\Sigma_i/(z_{i(n)}\sim z_{j(n)}:n\in N)\]
Label the nodes which are merged in going from $T$ to $T'$ by $M\subset N$. For each $n\in M$ define $A_n=T_{z_{i(n)}}\Sigma_{i(n)}\otimes_{\CC}T_{z_{j(n)}}\Sigma_{j(n)}$.

We will now construct a metric $g'$ on $\Sigma_{T'}$ given a metric $g$ on $\Sigma_{T}$ and describe how the complex structure changes (see \cite{McD}). By a metric we mean a smooth K\"{a}hler metric on each component. This will depend on a choice of $a_n\in A_n$ for each $n\in N'$; we denote this choice by $\underline{a}$ and call it \emph{gluing data}. Assume that $g$ is flat in a neighbourhood of $z_{i(n)}$ and $z_{j(n)}$ for each $n\in M$ and let $\exp_{i(n)}:T_{z_{i(n)}}\Sigma_{i(n)}\rightarrow\Sigma_{i(n)}$, $\exp_{j(n)}:T_{z_{j(n)}}\Sigma_{j(n)}\rightarrow\Sigma_{j(n)}$ denote the exponential maps. Using the map
\[\psi_n:T_{z_{i(n)}}\Sigma_{i(n)}\setminus\{0\}\rightarrow T_{z_{j(n)}}\Sigma_{j(n)}\setminus\{0\},\ \psi_n(x)=\frac{a_n}{x}\]
we can glue the domains $\Sigma_{i(n)}\setminus\exp_{i(n)}(B_{\sqrt{|a_n|}})$ and $\Sigma_{j(n)}\setminus\exp_{j(n)}(B_{\sqrt{|a_n|}})$ via the \emph{merging identification} $\exp_{j(n)}\circ\psi_n\circ\exp_{i(n)}^{-1}$. Choose a function $\chi_n:(0,\infty)\rightarrow (0,\infty)$ such that $\chi_n(s)=1$ when $s$ is slightly larger than $\sqrt{|a_n|}$ and such that $\chi_n(|x|)|dx|^2$ is $\psi_n$-invariant (note that $\psi_n^*(|dx|^2)=\left|\frac{a_n}{x}\right|^2|dx|^2$). The metric $g'$ is defined using this invariant metric to extend $g$ over the necks introduced by merging bubbles.

We also need to define how the family of almost complex structures $\kappa$ changes under gluing.
\begin{dfn}
Given bubble trees $T'<T$ and an $\epsilon$-perturbation $\kappa:B_{\epsilon}\rightarrow\mJ_T$ centred at $J_0$ we say $\kappa$ is {\em gluable} if there exists a constant $\mu$ such that on a $\mu$-neighbourhood of each $n\in M$ in the domain the almost complex structure $J_Y$ is domain-independent for all $Y\in B_{\epsilon}$. Given a gluable $\epsilon$-perturbation and a bubble tree $T'<T$ and a choice of $\underline{a}$ we define its {\em $\underline{a}$-gluing} to be the $\epsilon$-perturbation $\kappa_{\underline{a}}:B_{\epsilon}\rightarrow\mJ_{T'}$ where, for $z\in\Sigma_{T'}$, $\kappa_{\underline{a}}(Y)(z)$ equals $\kappa(Y)(\tilde{z})$, where $\tilde{z}\in\Sigma_T$ is sent to $z$ under the merging identification. Note that this is well-defined whenever $|a_n|<\mu$ for all $n\in M$ because $\kappa$ is gluable.
\end{dfn}
We do not give a proof of gluing but refer the reader to (\cite{MS04}, Chapter 10) for a detailed proof without varying $J$ and \cite{McD} for a less detailed proof with varying $J$.
\begin{prp}[Gluing]\label{glue}
Let $u$ be a stable $J_0$-holomorphic curve modelled on a bubble tree $T$ and suppose that $\kappa:B_{\epsilon}\rightarrow\mJ_T$ is a gluable $\epsilon$-perturbation centred at $J_0$ with the further property that $\iota_u(d\kappa(K))$ is a complement for $\im(D_u)$. Let $\mM_T(\kappa,c)$ denote the moduli space of stable maps modelled on the bubble tree $T$ which are $J_Y$-holomorphic for some $Y\in B_{\epsilon}$ and satisfy $||du_i||_{L^{\infty}_0}\leq c$ for all components $u_i$. Then for any $T'<T$ there is an $\epsilon'<\epsilon$, a neighbourhood $\nu$ of $u$ in the space of stable maps modelled on $T$, a non-increasing function $(0,\infty)\rightarrow (0,1):c\mapsto r(c)$ and, for each $(c,\underline{a})$ with $0<|a_n|<r(c)^2$, $c>||du||_{L^{\infty}_0}$ an embedding
\[\Gl(u,\underline{a},c):(\nu\times B_{\epsilon'})\cap\mM_T(\kappa,c)\rightarrow\mM_{T'}(\kappa_R,\infty)\]
with the obvious analogues of properties (i)-(iv) in (\cite{MS04}, Theorem 10.1.2). In particular (\cite{MS04}, Corollary 10.1.3)
\begin{itemize}
\item[($\dag$)] if $r_j\rightarrow\infty$, $\underline{a}_j$ is a sequence of gluing data with $|a_{j,n}|<r_j$ and $u_j\in\mM_{T'}(\kappa_{j},\infty)$ is a sequence which Gromov-converges to $u$ then there is a $j_0$ such that for all $j\geq j_0$ we have $u_j\in\im(\Gl(u,\underline{a},c))$.
\end{itemize}
\end{prp}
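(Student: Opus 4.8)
The plan is to run the standard pregluing--Newton-iteration scheme of (\cite{MS04}, Chapter 10), carried uniformly in the neck parameters $\underline{a}$ and with the family parameter $Y\in B_{\epsilon}$ absorbed as in \cite{McD}; the endgame will be an application of the quantitative implicit function theorem, Proposition \ref{IFT}, which was stated while tracking the Sobolev constant $c_p(d\vol_{\Sigma})$ precisely so that it could be invoked over the glued domains.

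First I would fix $T'<T$ and small gluing data $\underline{a}$, forming the glued domain $(\Sigma_{\underline{a}},j_{\underline{a}})$ with the neck metric $g_{\underline{a}}$ and the glued family $\kappa_{\underline{a}}\colon B_{\epsilon}\to\mJ_{T'}$ (well defined because $\kappa$ is gluable, hence domain-independent near the merged nodes). Given $(v,Y)\in(\nu\times B_{\epsilon'})\cap\mM_T(\kappa,c)$ I would build the preglued map $v_{\underline{a}}\colon\Sigma_{\underline{a}}\to Z$ by splicing the components $v_i$ across each neck with a logarithmic cutoff that matches nodal values, and verify the usual pregluing estimate $\|\dbar_{J_{Y,\underline{a}}}v_{\underline{a}}\|_{L^p_{\underline{a}}}\le C\rho(\underline{a})$ with $\rho(\underline{a})\to0$, together with the uniform bound $c_p(d\vol_{\Sigma_{\underline{a}}})\le c_0$ independent of $\underline{a}$ (this last being the reason the neck metrics $\chi_n(|x|)|dx|^2$ were chosen the way they were).

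The analytic heart is a right inverse $Q_{v_{\underline{a}}}$ for $D_{v_{\underline{a}}}+\iota_{v_{\underline{a}}}$ with norm bounded uniformly in small $\underline{a}$ and in $v$ near $u$. The hypothesis that $\iota_u(d\kappa(K))$ complements $\im(D_u)$ gives a bounded right inverse $Q_u$; since $\im(D_v)=\bigoplus_i\im(D_{v_i})$ and the cokernels of the component operators are described explicitly in Section \ref{linear}, one obtains uniformly bounded right inverses for each component operator of every nearby $T$-curve, then glues them across the necks with the same cutoff, estimates the error term and removes it by a Neumann series. Because $\iota_{v_{\underline{a}}}(K)$ is a small perturbation of the complement $\iota_u(K)$ it remains a complement of $\im(D_{v_{\underline{a}}})$, so Proposition \ref{IFT} applies and produces, for each $(v,Y)$ and each small $\underline{a}$, a unique small $(\xi',Y')=Q_{v_{\underline{a}}}\eta$ with $\exp^{g_{\underline{a}}}_{v_{\underline{a}}}(\xi')$ being $J_{\kappa_{\underline{a}}(Y+Y')}$-holomorphic; I would then define $\Gl(u,\underline{a},c)(v,Y)$ to be this curve (with its marked points). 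Smoothness in all parameters, injectivity, and the compatibility/limit properties (i)--(iv) of (\cite{MS04}, Theorem 10.1.2) would follow from the uniqueness clause of Proposition \ref{IFT} plus routine bookkeeping. For Property $(\dag)$ I would argue that a Gromov-convergent sequence $u_j\to u$ of $J_{\kappa_j}$-holomorphic $T'$-curves is, for $j$ large, $W^{1,p}_{\underline{a}_j}$-close to the pregluing of some $J_{Y_j}$-holomorphic $T$-curve $v_j$ near $u$ (using monotonicity/isoperimetric estimates to kill bubbling in the necks and elliptic bootstrapping for the $C^0$ and derivative control), whereupon uniqueness in Proposition \ref{IFT} forces $u_j=\Gl(u,\underline{a}_j,c)(v_j,Y_j)$.

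The main obstacle is exactly the uniform right inverse over arbitrarily long necks together with the surjectivity statement $(\dag)$: both require estimates localised in the neck region (weighted Sobolev norms, Neumann-series control of the spliced right inverse, annulus energy bounds to exclude escaping bubbles), and it is there, rather than in the formal iteration, that all the work lies. By contrast, threading the extra parameter $Y$ through the construction is essentially free: gluability makes $J_Y$ domain-independent near the nodes and necks, so the neck analysis is insensitive to $Y$ and one simply carries the pair $(\xi',Y')$ through the same fixed-point argument.
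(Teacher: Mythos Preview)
Your proposal is correct and follows precisely the approach the paper indicates: the paper does not give its own proof of this proposition but simply refers the reader to (\cite{MS04}, Chapter 10) for the detailed gluing argument without varying $J$ and to \cite{McD} for the variant with varying $J$. Your outline of the pregluing--Newton-iteration scheme, with the family parameter $Y$ threaded through via Proposition \ref{IFT} and the gluability hypothesis used to make the neck analysis insensitive to $Y$, is exactly the standard argument from those references and is what the paper has in mind.
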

We now return to the proof of Theorem \ref{GWvanish}. We will denote by $\mMuniv_T$ the universal moduli space of pseudoholomorphic stable maps modelled on a tree $T$ representing the class $\beta$, considered as a subset of $\mB_T$. Here $\mB_T:=\bigcup_{J\in\mJ}\mB_{J,T}$ denotes the union over $J\in\mJ$ of the space of $W^{1,p}_J$-maps modelled on a bubble tree $T$.
\begin{proof}
Consider a smooth 1-parameter family $J_t\in\mJ$ of domain-{\em independent} almost complex structures such that $J_0=J_-$ and $\left.\frac{\partial}{\partial t}\right|_{t=0}J_t=\delta J$. Assume that the Gromov-Witten invariant is nonvanishing so that for each $t\in[0,1]$ there exists a $J_t$-holomorphic stable map $u'_t$ in the class $\beta$. By Gromov compactness we can extract a subsequence $0<t_k\rightarrow 0$ such that $u_{t_k}=u'_{t_k}\circ\phi_k$ converges (for some sequence of reparametrisations $\phi_k$) to a stable $J_0$-holomorphic map $u$. We may artificially add marked points to the domain to ensure there are no automorphisms.

Pick an extension of $J_t$ to an $\epsilon$-perturbation at $J_-$
\[\kappa:B^{\exc}_{\epsilon}\rightarrow\mJ\]
(where $B_{\epsilon}^{\exc}$ is an $\exc$-dimensional Euclidean ball) with the property that $K:=\im(d_0\kappa)\subset T_{J_-}\mJ$ satifies
\[\iota_u(K)\cap\im(D_u)=\{0\}\]
The existence of this perturbation is precisely the transversality theorem (\cite{MS04}, Theorem 3.2.1) applied to each (possibly non-simple) component of the stable curve $u$ - transversality can be achieved for the non-simple components by allowing domain-dependent $J$s since we are in a semi-positive symplectic manifold. Note that to {\em define} Gromov-Witten invariants we cannot achieve this transversality simultaneously over all strata because the bubbles that develop have domain-independent almost complex structures. That is not our goal: we wish to find a contradiction to the existence of the particular Gromov-convergent sequence $u_k$ we constructed under the assumption that the Gromov-Witten invariant was non-zero.

By passing to a subsequence we can assume that all $u_k$ are modelled on the same bubble tree $T'$. Let us first assume that $u$ is also modelled on $T'$. Observe that by construction $D_u+\iota_u$ is surjective and Fredholm and therefore admits a right inverse $Q_u$. Scholium \ref{handy} tells us that there is a neighbourhood $U$ of $(u,J_-)$ such that all $J_Y$-holomorphic stable maps $\exp^{g_Y}_u(\xi)$ with $(\xi,Y)\in U$ are in the image of a $C^r$-smooth map
\[(\xi,0)\mapsto(\xi,0)+Q_u\phi(\xi)\]
defined on the kernel $\ker(D_u)\oplus 0=\ker(D_u+\iota_u)$. For $k$ large enough $u_k$ is $J_{Y_k}$-holomorphic (for some $Y_k\neq 0$) and of this form since the sequence converges to in the $C^{r}$-topology to $u$. However, we know that $\mM_T(J_-)$ is a smooth manifold near $u$ with tangent space $\ker(D_u)$. This implies that $Q_u(\phi(\xi)\in V\times \{0\}$ for $\xi$ small enough. This contradicts the fact that $Y_k\neq 0$. Therefore $u$ must be modelled on a different bubble tree, $T'<T$.

We choose a gluable $\epsilon$-perturbation $\kappa:B^{\exc}_{\epsilon}\rightarrow\mJ_{T'}$ centred at $J_-$ which contains the family $J_t$ and which satisfies $\iota_u(d\kappa(K))\cap\im(D_u)=0$. The existence of this perturbation is guaranteed by transversality, for we can pick the almost complex structures domain-dependently away from the $\mu$-neighbourhoods of the nodes. Since our original family $J_t$ consisted of domain-independent almost complex structures the $\underline{a}$-gluing $\kappa_{\underline{a}}$ contains $J_t$ for all $\underline{a}$ small enough. Pick $c>||du||_{L^{\infty}_0}$. By assumption the gluing map $\Gl(u,\underline{a},c)$ lands in $\mM_T(J_-,\infty)$ for $\underline{a}$ small enough. By Proposition \ref{glue} $(u_k,J_k)$ lies in the image of $\Gl(u,\underline{a},c)$ for large $k$, however $J_k\neq J_-$ by assumption. This is a contradiction.
\end{proof}
\begin{rmk}\label{SalEtAl}
It remains to show that Property ($\dag$) of Proposition \ref{glue} holds for our moduli spaces of $J_-$-holomorphic curves in the twistor fibre. This follows from \cite{RRS}, since $J_-|_F$ is an integrable complex structure on the twistor fibre and the twistor fibre is a convex manifold (i.e. all holomorphic curves are regular). The main theorem of \cite{RRS} therefore implies that the moduli space of stable maps into each twistor fibre is a smooth orbifold. In particular it tells us that in the neighbourhood of a stable $J_-$-holomorphic curve $u$ with no automorphisms the moduli space is a smooth manifold of dimension $\exc$ (which is equal to the expected dimension of $u$ considered as a curve in the twistor fibre). In the cases we need it is easy to check that the strata are all still smooth upon intersecting with the explicit cycles $X_1,\ldots,X_k$.
\end{rmk}
\section{Floer theory of Reznikov Lagrangians}\label{FF}
\subsection{Obstruction term}
The following notion was introduced by Fukaya, Oh, Ohta and Ono in a more general context in their book \cite{FOOO}. It arises as the first of an infinite sequence of filtered $A_{\infty}$ operations on a suitable space of singular chains on $L$.
\begin{dfn}
Let $L$ be a monotone Lagrangian. If $J$ is regular for all moduli spaces of Maslov 2 discs then the obstruction $\m_0$ is the chain represented by the evaluation map from the moduli space of Maslov 2 discs with a single boundary marked point to $L$.
\end{dfn}
In our case ($n=3$) there is precisely one component in this moduli space, corresponding to the hemispheres of real algebraic lines in $\CC\PP^3$ (with boundary on $SO(3)\cong\RR\PP^3$). The expected dimension of the moduli space is $n+\mu-3+1=6=\dim(L_{\Sigma})$ so the obstruction cycle is homologous to a multiple of the fundamental class. Let us write $\FF_{\mu,k}$ for the Fukaya-Floer chain
\[\ev:\mM_{\mu,k}\rightarrow L_{\Sigma}^k\]
where $\mM_{\mu,k}$ denotes the moduli space of Maslov-$\mu$ discs with boundary on $L_{\Sigma}$ and $k$ boundary marked points. We now prove the first part of Theorem \ref{obs-fooo}.
\begin{thm}\label{obs-bit}
If $\Sigma$ is an oriented totally geodesic submanifold of an oriented hyperbolic 6-manifold $M$ and $L_{\Sigma}$ denotes the Reznikov Lagrangian lift in the twistor space of $M$ then
\[\m_0=\pm 2\sqrt{q}[L_{\Sigma}].\]
\end{thm}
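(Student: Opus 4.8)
The plan is to run the obstruction-bundle argument of Section~\ref{gw-line}, now for discs. Let $\beta$ generate $\pi_2(Z,L_\Sigma)\cong\ZZ$; by Lemma~\ref{htpy-discs} it has Maslov index $2$, and $\beta$ is ``half'' the twistor line class $A$ (gluing the two hemispheres of a real twistor line produces a sphere in class $A$), so $e^\beta=\sqrt q$ in the Novikov ring and it suffices to show that $\ev\colon\mM_{2,1}\to L_\Sigma$ represents $\pm2[L_\Sigma]$. By the classification of holomorphic discs on $L_\Sigma$ (Section~\ref{holdisc}) every such disc is vertical, so $\mM_{2,1}$ fibres over $\Sigma$ through $\tau$; the fibre over $p$ is the moduli space of $J_-$-holomorphic discs of class $\beta$ in the twistor fibre $F_p\cong\CC\PP^3$ with boundary on $L_\Sigma\cap F_p\cong SO(3)\cong\RR\PP^3$ and one boundary marked point. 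These are the hemispheres of the ``real'' twistor lines meeting $\RR\PP^3$: as a disc in $(\CC\PP^3,\RR\PP^3)$ each hemisphere has Maslov index $4$ (the minimal Maslov number of $\RR\PP^3$), the drop to $2$ in $Z$ being the contribution $c_1(\mH)\cdot A=-2$ of the horizontal bundle $\mH$ along the line.

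The first step is to redo the linearised analysis of Section~\ref{linear} relatively. As there $D_u$ is block-lower-triangular for $\mH\oplus\mV$. The diagonal block $D_u^{VV}$ governs deformations of $u$ as a disc in $(\CC\PP^3,\RR\PP^3)$ and is surjective, because the minimal-Maslov discs on $\RR\PP^3\subset\CC\PP^3$ are regular (the disc analogue of Remark~\ref{SalEtAl}, $\RR\PP^3$ being the fixed locus of the $J_-$-antiholomorphic involution $\iota_\Sigma$ on the convex fibre $F_p$); hence $\coker D_u\cong\coker D_u^{HH}$. The block $D_u^{HH}$ is again the $\dbar$-operator of Equation~\eqref{CR}, now on the disc with the totally real boundary condition $TL_\Sigma\cap\mH=\widetilde{T\Sigma}$ (note that $T_p\Sigma$ is totally real in $(T_pM,\psi)$ exactly when $\psi\in L_\Sigma$). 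Exactly as in Proposition~\ref{lintheory}(ii) its cokernel is spanned by those explicit solutions $\eta_v$ that additionally satisfy the adjoint boundary condition, and $v\mapsto\eta_v$ identifies it with a subbundle of $\tau^*TM$; a dimension count fixes its rank: the real twistor lines over $p$ form a $4$-dimensional family (a $2$-plane in each of $T_p\Sigma$ and $(T_p\Sigma)^\perp$, the choice of hemisphere being discrete), while the virtual dimension of $\mM_{2,0}$ in a fibre is $\dim\RR\PP^3+2-3=2$, so the obstruction bundle $\Obs\to\mM_{2,0}$ has rank $2$, and computing it precisely is a linear-algebra exercise along the boundary circle as in the proof of Theorem~\ref{obstructionthm}. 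By the argument of Section~\ref{gw-line} ($\mM_{2,0}$ is compact, since no bubbling can occur in class $\beta$), or else after perturbing $J$ through the slices of Section~\ref{pert-slice} exactly as in the proof of Theorem~\ref{GWvanish}, one obtains $\m_0=\pd\,\ev_!\,\ft^*\,\obs$, where $\ft\colon\mM_{2,1}\to\mM_{2,0}$ is the forgetful map and $\obs$ is the Euler class of $\Obs$.

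It remains to compute this fibre integral. Since $\ev$ covers $\mathrm{id}_\Sigma$ it suffices, for fixed $p$, to compute the $\obs$-weighted degree of $\ev_p\colon\mM_{2,1}^{F_p}\to L_\Sigma\cap F_p=\RR\PP^3$; because $\Obs$ has rank $2$ the result, after pushforward and Poincar\'{e} duality, is indeed a multiple of $[L_\Sigma]$. Both $\mM_{2,0}^{F_p}$ and $\mM_{2,1}^{F_p}$ are homogeneous for the stabiliser $S(O(3)\times O(3))$ of $T_p\Sigma$, so the integral is a Borel--Hirzebruch fibre integral of exactly the type performed throughout Section~\ref{top-comp}, and I expect it to return $2$. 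This is consistent with the heuristic that a real twistor line through a generic boundary point contributes both its hemispheres, and with the fact (\cite{Aur}, Proposition~6.8) that the only eigenvalue of $c_1(Z)\star(-)$ on $QH^*(Z)$ available to a Lagrangian with nonvanishing self-Floer homology, apart from the four exotic ones of Corollary~\ref{c1eig}, is $\pm2\sqrt q$. The sign depends on the relative spin structure on $L_\Sigma$, which is why the $\pm$ survives: the content of the computation is that, for the natural orientation, the two hemispheres of a real twistor line count with the same sign rather than cancelling, despite $\iota_\Sigma$ interchanging them.

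The principal obstacle is the analytic bookkeeping of the second step: setting up the disc analogue of the transversality-and-gluing package of Section~\ref{prfbigbadthm} for this relative problem, verifying surjectivity of $D_u^{VV}$ for antiholomorphic discs in $(\CC\PP^3,\RR\PP^3)$, and controlling orientations carefully enough to be certain the count is nonzero. Granting those, the determination of $\obs$ and the final Borel--Hirzebruch computation are routine, of precisely the sort already carried out for the Gromov--Witten invariants.
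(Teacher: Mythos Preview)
Your outline is the paper's own argument: obstruction-bundle methods on the compact moduli of Maslov-$2$ (vertical) discs, with $D_u$ block lower-triangular, $D_u^{VV}$ surjective, and the cokernel concentrated in the horizontal block subject to the totally real boundary condition $\widetilde{T\Sigma}$. The rank-$2$ dimension count and the spin-structure origin of the sign are correct.

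What is missing is precisely the step you label ``expected'' and ``routine'': you neither identify $\Obs$ nor evaluate the integral, and the two-hemisphere heuristic together with the Auroux eigenvalue check (which is circular here, being what the theorem is meant to confirm) do not replace that computation. The paper closes the gap by making everything homogeneous. A real twistor hemisphere over $p$ is determined by unit vectors $v\in T_p\Sigma$ and $w\in\nu_p\Sigma$ (the line consists of the $\psi$ preserving $\langle v,w\rangle$), so
\[\mM_{2,0}=\Gamma_\Sigma\backslash SO^+(3,1)\times SO(3)/\bigl(SO(2)\times SO(2)\bigr),\qquad \mM_{2,1}=\Gamma_\Sigma\backslash SO^+(3,1)\times SO(3)/SO(2)_\Delta.\]
The disc version of Proposition~\ref{lintheory} replaces $\mH=\tau^*TM$ by $\tau|_{L_\Sigma}^*T\Sigma$ (not merely ``a subbundle of $\tau^*TM$''), and $\Obs$ becomes the tautological $SO(2)$-bundle on $\mM_{2,0}$. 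Pulling its Euler class back along $\ft$ and integrating over the fibre $SO(3)_\Delta/SO(2)_\Delta\cong S^2$ of $\ev$ yields the Euler number of the tautological $SO(2)$-bundle over $SO(3)/SO(2)$, namely $\pm 2$. That is the whole computation; once you write down the homogeneous description there is no Borel--Hirzebruch machinery left to invoke.
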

\begin{proof}
The moduli space of Maslov 2 discs (twistor hemispheres) is compact so we can employ obstruction bundle techniques to compute $\FF_{2,1}$ (note that by definition $[\FF_{2,1}]=\m_0$). Since the totally geodesic submanifold $\Sigma$ is oriented it has the form
\[\Gamma_{\Sigma}\backslash SO^+(3,1)\times SO(3)/SO(3)\times SO(3)\]
and the Reznikov lift is
\[\Gamma_{\Sigma}\backslash SO^+(3,1)\times SO(3)/SO(3)_{\Delta}\]
where $SO(3)_{\Delta}=SO(3)\times SO(3)\cap U(3)$ is the diagonal subgroup. A twistor line with boundary on $L_{\Sigma}$ can be specified by giving a unit vector $v\in T_p\Sigma$ and a unit normal vector $w\in\nu_p\Sigma$ and taking the set of $\psi$ preserving the 2-plane $\langle v,w\rangle$. Hence the moduli space of twistor hemispheres is a $S^2\times S^2=\tilde{\mathrm{Gr}}_1(\RR\PP^3)$-bundle over $\Sigma$
\[\Gamma_{\Sigma}\backslash SO^+(3,1)\times SO(3)/(SO(2)\times SO(2))\]
Adding a marked point on the boundary we obtain
\[\Gamma_{\Sigma}\backslash SO^+(3,1)\times SO(3)/SO(2)_{\Delta}\]
The linear analysis of the $\dbar$-operator is identical to the case of closed curves except that we only allow deformations which come from vector fields on $\Sigma$ (that is, $\mH=\tau^*(TM)$ is replaced by $\tau|_{L_{\Sigma}}^*(T\Sigma)$). This implies that the obstruction bundle is 2-dimensional with Euler class equal to the Euler class of the tautological $SO(2)$-bundle. The 1-point invariant $\FF_{2,1}$ is therefore given by evaluating the fibre integral of the this Euler class which gives $\pm 2$. The $\pm 1$ comes from the choice of spin structure on $L_{\Sigma}$ (or alternatively from picking a flat connection with holonomy $\pm 1$ around the nontrivial loop in the $SO(3)$ factor) and the $\sqrt{q}$ in the formula for $\m_0$ comes from the area of holomorphic discs (if $q=\exp(-\int_A\omega)$ for a twistor line $A$ then $\sqrt{q}=\exp(-\int_h\omega)$ for a hemisphere $h$).

Note that $L_{\Sigma}$ admits a spin structure since it is diffeomorphic to $\Sigma\times SO(3)$ both factors of which are spin, the principal frame bundle of $\Sigma$ being trivial since $\Sigma$ is an orientable 3-manifold. Changing the spin structure along the nontrivial loop in $SO(3)$ changes the sign of $\m_0$ while changing the spin structure along a loop from $\Sigma$ has no effect (there are no discs with such a boundary).
\end{proof}
\subsection{Quantum homology of Reznikov Lagrangians}
We finish the proof of Theorem \ref{obs-fooo} by calculating the quantum homology of a Reznikov Lagrangian using the techniques and definitions of \cite{BC}. We recall that the quantum homology $QH(L)$ of an oriented monotone Lagrangian submanifold $L$ is defined to be the homology of the pearl complex associated to a choice of Morse function $F$ on $L$, metric on $L$ (such that the gradient flow is Morse-Smale) and generic almost complex structure on $Z$. The chain groups are the free $\CC$-modules on the critical points of $F$ and the differential counts oriented ``pearly trajectories'' which are sequences of $F$-gradient flowlines and $J$-holomorphic discs with boundary on $L$. Although the theory is developed in \cite{BC} with $\ZZ/2$-coefficients the orientation issue is cleared up in (\cite{BC2}, Appendix A).
\begin{thm}
Let $\Sigma$ be an oriented totally geodesic 3-dimensional submanifold of an oriented hyperbolic 6-manifold. The quantum homology of the Reznikov lift $L_{\Sigma}$ is
\[QH_*(L)\cong H_*(L;\CC[t])\]
where we write $t=q^{1/2}$ for the Novikov parameter.
\end{thm}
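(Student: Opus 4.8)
The plan is to compute the pearl complex of $L_\Sigma$ directly, using the description of its holomorphic discs from Section \ref{holdisc} together with the obstruction-bundle machinery already set up for $Z$. First I would fix a Morse function $F$ on $L_\Sigma$, a Morse--Smale metric, and an $\omega_{\rez}$-compatible almost complex structure close to $J_-$ (the non-genericity of $J_-$ being absorbed into obstruction bundles exactly as for the Gromov--Witten invariants of $Z$). Following \cite{BC}, with the orientations of \cite{BC2}, the pearl complex has underlying module $C_*(F;\CC)\otimes_{\CC}\CC[t]$ with $\deg t=-2=-N_{L_\Sigma}$ and differential $\partial=\partial_0+t\partial_1+t^2\partial_2+t^3\partial_3$; the sum terminates because $\partial_\nu$ raises the Morse index by $2\nu-1$ and $\dim L_\Sigma=6$. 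Since $\partial_0$ is the Morse differential, the associated Biran--Cornea spectral sequence has $E^1\cong H_*(L_\Sigma;\CC)\otimes\CC[t]$ and converges to $QH_*(L_\Sigma)$, so the theorem is equivalent to the statement that this spectral sequence degenerates at $E^1$, i.e.\ that the differentials induced on homology by the quantum terms $t\partial_1,\dots,t^3\partial_3$ all vanish; freeness of $H_*(L_\Sigma;\CC)\otimes\CC[t]$ over $\CC[t]$ then removes any extension ambiguity.

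Next I would describe the relevant moduli spaces. By Section \ref{holdisc} every $J$-holomorphic disc with boundary on $L_\Sigma$ is vertical: it lies in a twistor fibre $F_p\cong\CC\PP^3$ over a point $p\in\Sigma$ with boundary on $F_p\cap L_\Sigma\cong\RR\PP^3$, and its class is a multiple of a twistor hemisphere. Repeating the linearised analysis of Section \ref{linear}, but restricting horizontal deformations to lifts of vector fields tangent to $\Sigma$ (exactly as in the proof of Theorem \ref{obs-bit}), the cokernel of $D_u$ over each of these moduli spaces is again the rank-$2$ tautological $SO(2)$-bundle, and the moduli space of Maslov-$2\nu$ discs with $k$ boundary marked points is a bundle of homogeneous spaces over $\Sigma$, homogeneous under $SO^+(3,1)\times SO(3)$. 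For $\nu=1$ these moduli spaces are compact, so the Maslov-$2$ contribution to $\partial$ is computed by an obstruction-bundle formula $\pd(\ev_k)_!\bigl(\ft_k^*\eul(\Obs)\bigr)$ of the type used in Section \ref{gw-line}. For $\nu\ge2$ the moduli space is non-compact and one runs the disc analogue of the argument of Section \ref{high-degree} (Theorems \ref{GWvanish}--\ref{crit-thresh}), producing a nowhere-zero section of $\Obs$ out of a vector field on $\Sigma$, to conclude that the contribution of those discs to $\partial$ vanishes identically.

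It then remains to evaluate the Maslov-$2$ term. As in Section \ref{ev-pushfwd}, this reduces via Borel--Hirzebruch theory on the homogeneous-space bundles over $\Sigma$ to fibre integrals of the Euler class of the tautological $SO(2)$-bundle along the evaluation maps. The crucial point is the contrast with the computation of $\m_0$: for one boundary marked point the evaluation fibre is exactly the $SO(2)\cong S^1$, whose Euler class integrates to $\pm2$, yielding $\m_0=\pm2\sqrt{q}[L_\Sigma]$; whereas for two or more boundary marked points the extra data of the positions of the marked points on the boundary circle contributes a factor along which this Euler class integrates to zero, so the induced differential $d^1$ on $E^1$ vanishes. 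The same vanishing can be read off from the orientation-reversing symmetry permuting the boundary marked points. A dimension count of the same kind disposes of the higher spectral-sequence differentials, giving $E^1=E^\infty$ and hence $QH_*(L_\Sigma)\cong H_*(L_\Sigma;\CC[t])$.

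The main obstacle is the disc analogue of the regularisation argument of Section \ref{prfbigbadthm}: one must extend the gluing and obstruction-bundle analysis to moduli of holomorphic discs with Lagrangian boundary conditions, including the compactness and gluing for configurations in which spheres bubble off into the twistor fibres, and verify that a nowhere-zero section of the obstruction bundle genuinely annihilates the $\nu\ge2$ contributions to the pearl differential. A secondary difficulty is the coherent choice of orientations on the regularised disc moduli spaces --- which is precisely what makes the vanishing of the Maslov-$2$ term with $\ge2$ marked points, rather than merely its reduction modulo $2$, rigorous, and is why one leans on \cite{BC2}.
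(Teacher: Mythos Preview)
Your overall strategy---set up the pearl complex, use the Biran--Cornea spectral sequence, and show $E^1$-degeneration via obstruction-bundle arguments on the vertical disc moduli---is exactly the paper's, and the identification of the rank-$2$ obstruction bundle for Maslov-$2$ discs is correct. However, two ingredients you omit are carrying real weight in the paper's argument, and your substitutes for them are not quite right.

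First, the paper chooses a \emph{product} Morse function $F=f+r$ on $L_\Sigma\cong\Sigma\times\RR\PP^3$, and this buys a filtration by $f$-value: since every $J_-$-disc is vertical, $f$ is constant along disc boundaries and strictly decreases along gradient flowlines, so a pearly trajectory from $p$ to $q$ forces $f(p)\geq f(q)$. This single observation kills $\delta^1_{0,0}$, $\delta^1_{0,1}$ and (crucially) the Maslov-$6$ differential $\delta^3_{0,0}$ outright. You instead propose to kill the $\nu\geq 2$ contributions by producing a nowhere-zero section of $\Obs$ from a vector field on $\Sigma$; for $\nu=2$ this is what the paper does (its ``hard obstruction bundle'' case), but for $\nu=3$ the paper does \emph{not} run that argument, and it is not clear that a single vector field on the $3$-manifold $\Sigma$ can be arranged to obstruct every stable Maslov-$6$ disc configuration over every point of $\Sigma$. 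The filtering sidesteps this entirely.

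Second, your justification for the vanishing of the Maslov-$2$ two-point contribution is too loose. There is no orientation-reversing symmetry swapping cyclically ordered boundary marked points, and the heuristic ``the extra boundary-circle factor integrates the Euler class to zero'' is not a proof. The paper's mechanism is specific: writing the fibrewise moduli $\mM_{2,0}(F)\cong S^2\times S^2$ with obstruction class $x$, the diagonal decomposition pushes the two-point class through $\mM_{2,1}(F)$, which is the circle bundle over $S^2\times S^2$ with Euler class $x+y$; the Gysin sequence then gives $H^4(\mM_{2,1}(F);\ZZ)=0$, so the pullback of $xy$ vanishes and $[\FF_{2,2}]=0$. That cohomological vanishing is the actual content, and it does not follow from a parity or symmetry argument.
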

We recall that the quantum homology of a monotone Lagrangian is (noncanonically) isomorphic to its self-Floer homology so this proves Theorem \ref{obs-fooo}.
\begin{proof}
We pick a Morse function $f$ on $\Sigma$ and the standard Morse function $r$ with four critical points on $\RR\PP^3$. We assume that the gradient flow on $L_{\Sigma}\cong \Sigma\times SO(3)$ of the function $F=f+r$ with respect to the product of the hyperbolic and round metrics is Morse-Smale (by suitable choice of $f$) and that $f$ (and hence $F$) has a unique maximum and a unique minimum. We may also assume that $f$ is self-indexing.

There is (\cite{BC}, Proposition 6.1.1, Proof A) a homology spectral sequence whose $E^1$-page is
\[E^1_{i,j}=H_{i-j}(L_{\Sigma};\CC)t^{-j}\]
and which converges to the quantum homology of $L_{\Sigma}$. We draw the $E^1$ page below (denoting $b_1(\Sigma)=:b$) and indicate the differentials we will show to be zero.
{\begin{center}
\begin{tikzpicture}
  \matrix (n) [matrix of math nodes,
    nodes in empty cells,nodes={minimum width=2ex,
    minimum height=2ex,outer sep=-2pt},
    column sep=1ex,row sep=1ex]{
0 & 0 & 0 & \CC\\
0 & 0 & \CC t & \CC^b\\
0 & \CC t^2 & \CC^bt & \CC^b\\
\CC t^3 & \CC^bt^2 & \CC^bt & \CC^2\\
\CC^bt^3 & \CC^bt^2 & \CC^2t & \CC^b\\
\CC^bt^3 & \CC^2t^2 & \CC^bt & \CC^b\\
\CC^2t^3 & \CC^bt^2 & \CC^bt & \CC\\
\CC^bt^3 & \CC^bt^2 & \CC t& 0\\
\CC^bt^3 & \CC t^2& 0 & 0\\
\CC t^3 & 0 & 0 & 0\\};
\draw[-stealth] (n-7-4.west) -- (n-7-3.east);
\draw[-stealth] (n-7-4.west) -- (n-6-2.south east);
\draw[-stealth] (n-7-4.west) -- (n-5-1.east);
\draw[-stealth] (n-6-4.west) -- (n-6-3.east);
\draw[-stealth] (n-6-4.west) -- (n-5-2.east);
\draw[-stealth] (n-5-4.west) -- (n-5-3.east);
\end{tikzpicture}
\end{center}}
That the other differentials vanish follows either for degree reasons or by a combination of Poincar\'{e} duality and the Leibniz property of the higher differentials with respect to cup product (which is certainly true on the $E^1$-page and continues to be true on the $E^2$ and $E^3$ pages because the $E^1$ and $E^2$ differentials vanish). Let us write $\delta^r_{i,j}$ for the $r$-th differential whose domain is $E^r_{i,j}$.

The differentials all vanish, but not all for the same reason. We now tackle the reasons the differentials vanish case by case.

\par\textbf{Filtering by the value of} $f$\textbf{:} We observe that the contribution to the higher differentials from pearly trajectories joining a critical point $p$ to a critical point $q$ vanishes when $f(p)>f(q)$. To see this, let $J_k$ be a sequence of regular almost complex structures with $J_k\rightarrow J_-$ as $k\rightarrow\infty$ and suppose to the contrary that there is a nonzero differential connecting $p$ to $q$. Let $u_k$ be a pearly trajectory contributing to this differential. We can extract a convergent subsequence $u_{k'}$ and the limit is a broken pearly trajectory whose discs are $J_-$-holomorphic and therefore contained in level sets of the function $f$. Since the gradient flow decreases $f$ and the discs do not allow one to return to larger values of $f$ we see that $f(p)>f(q)$. This argument proves vanishing of $\delta^1_{0,0}$, $\delta^1_{0,1}$ and $\delta^3_{0,0}$.

\par\textbf{Easy obstruction bundle methods:} To prove that $\delta^1_{0,2}=0$ notice that this differential counts (for a regular $J$) pearly trajectories connecting a critical point $y$ of index 2 (which has index 2 as a critical point of $f$ and 0 as a critical point of $r$) to the critical point $q$ of index 3 corresponding to the maximum of $r$ and the minimum of $f$. There is precisely one $J$-holomorphic disc in this trajectory and it has Maslov index 2. Such a pearly trajectory corresponds precisely to a $J$-disc whose boundary intersects the unstable manifold of $y$ and the stable manifold of $q$. Since the moduli space of Maslov 2 discs is compact by minimality of the relative homology class the Fukaya-Floer chain $\FF_{2,2}$ of Maslov 2 discs with two boundary marked points is a cycle: its boundary has two types of component, where the first marked point approaches the second from a clockwise or an anticlockwise direction. These cancel so the boundary of $\FF_{2,2}$ is the zero chain. By a priori compactness of the moduli space we can compute the homology class of this cycle using the obstruction bundle techniques we used to prove Theorems \ref{GWbigformula-thm} and \ref{obs-bit}. The moduli space $\mM_{2,0}$ is diffeomorphic to $S^2\times S^2\times\Sigma$ so if we write $x,y\in H^2(S^2\times S^2;\ZZ)$ for a $\ZZ$-basis with $x^2=y^2=0$ then the diagonal decomposition for $\Delta^2:\mM_{2,0}\rightarrow\mM_{2,0}^2$ is
\[\{x\otimes y+1\otimes xy\}\cup\tau^*\Delta_!^2(1_{\Sigma})\]
The Euler class of the obstruction bundle is $x$ (which is the Euler class of one of the two tautological $SO(2)$-bundle over $S^2\times S^2=SO(4)/SO(2)\times SO(2)$). Cupping the diagonal with $x\otimes 1$ gives
\[\{xy\otimes x)\}\cup\tau^*\Delta_!^2(1_{\Sigma})\]
In each fibre the moduli space of Maslov 2 discs with one marked point $\mM_{2,1}(F)$ is a circle bundle over $S^2\times S^2$ with Euler class $x+y$ and (by the Gysin sequence) it has
\[H^4(\mM_{2,1}(F);\ZZ)=0\]
Therefore the pullback of $xy$ to $\mM_{2,1}$ vanishes. In particular the Fukaya-Floer cycle is nullhomologous. Since the count of pearly trajectories contributing to this differential is just the intersection number of this cycle with the product of the stable manifold of $q$ and the unstable manifold of $y$, the differential vanishes.

\par\textbf{Hard obstruction bundle methods:} To prove that $\delta^2_{0,0}=0$ notice that this differential counts (for a regular $J$) pearly trajectories going from the global minimum $p$ of $F$ to the critical point $q$ of index 3 corresponding to the maximum of $r$ and the minimum of $f$. (That the differential has no contribution from pearly trajectories going from $p$ to the critical point $q'$ corresponding to the maximum of $f$ and the minimum of $r$ follows by the previous filtering argument.) Such trajectories consist of a (Maslov 4) $J$-holomorphic disc through the global minimum whose boundary intersects the stable manifold of $q$. Assume that the differential is nonzero and that therefore such pearly trajectories exist for any $J$ and suppose that $J_t$ is a family of almost complex structures obtained by exponentiating an infinitesimal deformation $\delta_vJ$ at $J_-$ associated to some vector field $v$ on $M$ as in the proof of Theorem \ref{GWvanish}. There is a sequence of $J_{t_i}$-holomorphic pearly trajectories ($J_{t_i}\rightarrow J_-$) which converges to some limit trajectory $u$. Since $J_-$-holomorphic discs are restricted to lie within a single twistor fibre and the stable manifold of $q$ intersects the fibre containing $p$ and $q$ precisely at $q$ we know that $u$ is just a stable Maslov 4 disc. Now a gluing or implicit function theorem argument as in the proof of Theorem \ref{GWvanish} (modified as in Section 4 of \cite{BC} to the case of holomorphic discs) shows that, for suitable choice of $v$, the $J_{t_i}$-holomorphic pearly trajectories cannot exist for $i$ large enough. A similar argument proves $\delta^1_{0,1}=0$.
\end{proof}

As was remarked in the introduction, this tallies with the fact that a Lagrangian with nonvanishing self-Floer cohomology has obstruction term equal to an eigenvalue of the first Chern class acting by quantum product on the quantum cohomology. It would be intriguing to find (or to rule out the existence of) monotone Lagrangians in the twistor space of a hyperbolic 6-manifold whose $\m_0$ equals one of the four ``exotic'' eigenvalues involving the Euler characteristic of $M$.

\end{document}